\newtheorem{theorem}{Theorem}[section]
\newtheorem{proposition}{Proposition}[section]
\newtheorem{lemma}[proposition]{Lemma}
\theoremstyle{definition}
\newtheorem{definition}{Definition}[section]
\newtheorem{remark}{Remark}[section]
\theoremstyle{plain}
\DeclareMathAlphabet{\mathcalligra}{T1}{calligra}{m}{n}
\DeclareFontShape{T1}{calligra}{m}{n}{<->s*[2.2]callig15}{}
\newcommand{\Tboot}{T_{(Boot)}}
\newcommand{\Trandatasize}[1]{\mathring{\updelta}}
\newcommand{\TrandatasizeWithFactor}{\mathring{\updelta}_*}
\newcommand{\Flatdiv}{\mbox{\upshape div}\mkern 1mu}
\newcommand{\Flatcurl}{\mbox{\upshape curl}\mkern 1mu}
\newcommand{\D}{\mathscr{D}}
\newcommand{\Speed}{c_s}
\newcommand{\Transport}{B}
\newcommand{\Densrenormalized}{\uprho}
\newcommand{\Vortrenormalized}{\upomega}
\newcommand{\Fullset}{\mathscr{Z}}
\newcommand{\Tanset}{\mathscr{P}}
\newcommand{\Singletan}{P}
\newcommand{\Lgeo}{L_{(Geo)}}
\newcommand{\Lunit}{L}
\newcommand{\uLunit}{\underline{L}}
\newcommand{\uLgood}{\breve{\underline{L}}}
\newcommand{\Radunit}{X}
\newcommand{\Rad}{\breve{X}}
\newcommand{\Mult}{T}
\newcommand{\GeoAng}{Y}
\newcommand{\deform}[1]{{^{(#1)} \mkern-1mu \pi}}
\newcommand{\deformarg}[3]{{^{(#1)} \mkern-1mu \pi_{#2 #3}}}
\newcommand{\smoothfunction}{\mathrm{f}}
\newcommand{\enmomtensor}{Q}
\newcommand{\f}{\frac}
\newcommand{\rd}{\partial}
\newcommand{\bX}{\Rad}
\newcommand{\vt}{\vartheta}
\numberwithin{equation}{subsection}
\begin{document}
\title{The Hidden Null Structure of the Compressible Euler Equations and a Prelude
to Applications
}
\author{Jonathan Luk$^{*}$ and Jared Speck$^{** \dagger}$}

\thanks{$^{\dagger}$JS gratefully acknowledges support from NSF grant \# DMS-1162211,
from NSF CAREER grant \# DMS-1454419,
from a Sloan Research Fellowship provided by the Alfred P. Sloan foundation,
and from a Solomon Buchsbaum grant administered by the Massachusetts Institute of Technology.
}

\thanks{$^{*}$Stanford University, Palo Alto, CA, USA.
\texttt{jluk@stanford.edu}}

\thanks{$^{**}$Massachusetts Institute of Technology, Cambridge, MA, USA.
\texttt{jspeck@math.mit.edu}}

\begin{abstract}
We derive a new formulation of the compressible Euler equations
exhibiting remarkable structures, 
including surprisingly good null structures.
The new formulation comprises covariant wave equations for 
the Cartesian components of the velocity and the logarithmic density
coupled to a transport equation for the specific vorticity,
defined to be vorticity divided by density.
The equations allow one to use the full power of the geometric vectorfield method in treating the ``wave part'' of the system.

A crucial feature of the new formulation
is that all derivative-quadratic inhomogeneous terms 
verify the strong null condition.
The latter is a nonlinear condition
signifying the complete absence of nonlinear interactions 
involving more than one differentiation in a direction
transversal to the acoustic characteristics.
Moreover, the same good structures are found in the equations 
verified by the Euclidean divergence and curl of the specific vorticity.
This is important because one needs to combine estimates for the divergence and curl
with elliptic estimates to obtain sufficient regularity for
the specific vorticity, whose derivatives appears as inhomogeneous terms
in the wave equations.

The above structures collectively open the door for our forthcoming results:  
exhibiting a stable regime in which initially smooth solutions develop a shock singularity
(in particular the first Cartesian coordinate partial derivatives of the velocity and density blow up)
while, relative to a system of geometric coordinates
adapted to the acoustic characteristics,
the solution (including the vorticity) 
remains many times differentiable, all the way up to the shock.
The good null structures, which are often associated with global solutions,
are in fact key to proving that the shock singularity forms.
Our secondary goal in this article is to overview the central role that the 
structures play in the proof.

\bigskip

\noindent \textbf{Keywords}:
characteristics;
eikonal equation;
eikonal function;
null condition;
null hypersurface;
null structure;
singularity formation;
vectorfield method;
vorticity;
wave breaking
\bigskip

\noindent \textbf{Mathematics Subject Classification (2010)} 
Primary: 35L67 - Secondary: 35L05, 35Q31, 76N10

\end{abstract}
\maketitle

\centerline{\today}

\tableofcontents
\setcounter{tocdepth}{2}

\section{Introduction}
\label{S:INTRO}
In this article, we study the compressible Euler equations
for a perfect fluid in three spatial dimensions
under a barotropic \emph{equation of state}, that is, 
when the pressure $p$ is a function of the density $\rho$:
\begin{align} \label{E:BAROTROPICEOS}
	p = p(\rho).
\end{align}
In this setting, the compressible Euler equations
are evolution equations for the velocity $v:\mathbb{R}^{1+3} \rightarrow \mathbb{R}^3$
and the density $\rho:\mathbb{R}^{1+3} \rightarrow [0,\infty)$. Our main result
in this paper is a reformulation of the equations as a coupled system 
of (quasilinear) wave and transport equations with inhomogeneous 
terms exhibiting remarkable structures.
As we will show in \cites{jLjS2016b,jLjS2017}, 
this allows for a precise mathematical understanding of 
the formation of shock singularities
in the presence of vorticity,
starting from regular initial conditions.

\subsection{Basic background}
\subsubsection{Definitions}
Before stating the equations, we first provide some definitions.
We use the following notation\footnote{See Subsect.~\ref{SS:NOTATION}
regarding our conventions for indices and implied summation.} 
for the Euclidean 
divergence and curl of a $\Sigma_t-$tangent vectorfield $V$,
where $\Sigma_t$ denotes the hypersurface of constant Cartesian time $t$:
\begin{align} \label{E:FLATDIVANDCURL}
		\Flatdiv V
		& := \partial_a V^a,
			\qquad
		(\Flatcurl V)^i
		:= \epsilon_{iab} \partial_a V^b.
	\end{align}
	In \eqref{E:FLATDIVANDCURL}, $\epsilon_{ijk}$ is the fully antisymmetric
	symbol normalized by
	\begin{align} \label{E:EPSILONNORMALIZATION}
	\epsilon_{123} = 1.
\end{align}
The vorticity 
$\omega: \mathbb{R}^{1+3} \rightarrow \mathbb{R}^3$ 
is the vectorfield
\begin{align} \label{E:VORTICITYDEFINITION}
	\omega^i 
	& := (\Flatcurl v)^i.
\end{align}
Rather than formulating the equations in terms of the
density and the vorticity,
we find it convenient to use the
\emph{logarithmic density} $\Densrenormalized$
and the \emph{specific vorticity} $\Vortrenormalized$.

To define these quantities, we first fix a constant background density $\bar{\rho}$ such that
\begin{align}  \label{E:BACKGROUNDDENSITY}
\bar{\rho} > 0.
\end{align}
In applications, one may choose any
convenient value\footnote{For example, when studying solutions
that are perturbations of non-vacuum constant states,
one may choose $\bar{\rho}$ so that 
in terms of the variable $\Densrenormalized$ from \eqref{E:MODIFIEDVARIABLES},
the constant state corresponds to $\Densrenormalized \equiv 0$.} 
of $\bar{\rho}$.

\begin{definition}[\textbf{Some convenient solution variables}]
\label{D:MODIFIEDVARIABLES}
\begin{align} \label{E:MODIFIEDVARIABLES}
	\Densrenormalized
	& := \ln \left(\frac{\rho}{\bar{\rho}} \right),
		\qquad
	\Vortrenormalized
	:= \frac{\omega}{(\rho/\bar{\rho})}
	= \frac{\omega}{\exp \Densrenormalized}.
\end{align}
\end{definition}

We assume throughout that\footnote{Throughout 
this article, we avoid discussing the dynamics in regions with vanishing density. The reason is that
the compressible Euler equations become degenerate along fluid-vacuum boundaries 
and not much is known about compressible fluid flow in this context; see, for example, \cite{dCsS2012}
for more information.}
\begin{align} \label{E:DENSITYPOSITIVE}
	\rho > 0.
\end{align}
In particular, the variable $\Densrenormalized$
(see \eqref{E:MODIFIEDVARIABLES})
is finite assuming \eqref{E:BACKGROUNDDENSITY} and \eqref{E:DENSITYPOSITIVE}.

\subsubsection{A standard first-order formulation of the compressible Euler equations}
We now state a standard formulation of the compressible Euler equations;
see, for example, \cite{dCsM2014} for a discussion of the physical origin 
of the equations. Specifically, relative to Cartesian coordinates, the compressible Euler equations
can be expressed\footnote{Throughout, if $V$ is a vectorfield
and $f$ is a function, then $Vf := V^{\alpha} \partial_{\alpha} f$
denotes the derivative of $f$ in the direction $V$.
\label{F:VECTORFIELDSACTONFUNCTIONS}} 
as follows:
\begin{subequations}
\begin{align} \label{E:TRANSPORTDENSRENORMALIZEDRELATIVETORECTANGULAR}
	\Transport \Densrenormalized
	& = - \Flatdiv v,
		\\
	\Transport v^i 
	& = 
	- \Speed^2 \partial_i \Densrenormalized
	=
	- \Speed^2 \delta^{ia} \partial_a \Densrenormalized.
	\label{E:TRANSPORTVELOCITYRELATIVETORECTANGULAR}
\end{align}
\end{subequations}
Above, $\delta^{ia}$ is the standard Kronecker delta,
\begin{align} \label{E:MATERIALVECTORVIELDRELATIVETORECTANGULAR}
	\Transport 
	& := \partial_t + v^a \partial_a
\end{align}
is the \emph{material derivative vectorfield},
and
\begin{align} \label{E:SOUNDSPEED}
	\Speed 
	& := \sqrt{\frac{dp}{d \rho}}
\end{align}
is a fundamental quantity known as the \emph{speed of sound}.
From now on, we view
\begin{align} \label{E:SPEEDOFSOUNDISAFUNCTIONOFRENORMALIZEDDENSITY}
	\Speed 
	& = \Speed(\Densrenormalized),
\end{align}
and, for future use, we set
\begin{align} \label{E:DEFINITIONOFSPEEDPRIME}
	\Speed'
	= \Speed'(\Densrenormalized)
	& := \frac{d}{d \Densrenormalized} \Speed.
\end{align}

\subsection{Summary of the main results and preliminary discussion}
\label{SS:PRELIMINARYDISCUSSION}
Note that neither the vorticity $\omega$ nor the specific vorticity $\Vortrenormalized$ appear in 
the system \eqref{E:TRANSPORTDENSRENORMALIZEDRELATIVETORECTANGULAR}-\eqref{E:TRANSPORTVELOCITYRELATIVETORECTANGULAR}.
However, $\Vortrenormalized$ plays
a central role in the main results of the present article, which we now summarize. 
We refer the readers to 
Theorem~\ref{T:GEOMETRICWAVETRANSPORTSYSTEM} on pg.~\pageref{T:GEOMETRICWAVETRANSPORTSYSTEM}
and Theorem~\ref{T:STRONGNULL} on pg.~\pageref{T:STRONGNULL} for the precise statements.

\begin{changemargin}{.25in}{.25in} 
\textbf{Summary of the main results.}
	The compressible Euler equations
	can be reformulated as a system of covariant wave equations for 
	the Cartesian components $\lbrace v^i \rbrace_{i=1,2,3}$ of the velocity 
	and the logarithmic density $\Densrenormalized$
	coupled to a transport equation for the specific vorticity $\Vortrenormalized$,
	a transport equation for $\Flatcurl \Vortrenormalized$,
	and an identity for $\Flatdiv \Vortrenormalized$.
	Moreover, the inhomogeneous terms exhibit remarkable structures, 
	\emph{including good null structures that can be viewed as extensions of the standard null forms adapted to the acoustical metric $g$}.
\end{changemargin} 

It is well-known since the foundational work of Riemann \cite{bR1860} in one spatial dimension
that solutions to the compressible Euler equations can form shocks in finite time, 
even if the initial data are smooth and small. This occurs in spite of the fact that
solutions enjoy a conserved energy.
That is, the energy is supercritical, even in one spatial dimension, and does not prevent singularity formation. 
The formation of shocks is connected in part to the failure of a null condition;
see Subsect.~\ref{SS:STRONGNULLVSOTHERNULL} for further discussion about ``null conditions.''
Put differently, 
there are Riccati-type interaction terms in the 
equations satisfied by the solution's first derivatives,
and these terms can drive the formation of a singularity tied to the intersection of characteristics. 
More precisely, the Riccati-type terms drive the blowup of the first Cartesian coordinate partial derivatives of the
density and velocity, while the velocity and density themselves remain bounded; this is the crudest 
picture of the formation of a shock singularity.
On the other hand, in our new formulation of the equations, 
all of the terms that violate the null condition are 
on the left-hand side of the equations,
``hidden ''
in the terms  $\square_g v^i$ and $\square_g \Densrenormalized$ (see \eqref{E:VELOCITYWAVEEQUATION}	and \eqref{E:RENORMALIZEDDENSITYWAVEEQUATION}),
where $\square_g$ is a covariant wave operator (see Def.~\ref{D:COVWAVEOP}).
We derive the new formulation by differentiating the system 
\eqref{E:TRANSPORTDENSRENORMALIZEDRELATIVETORECTANGULAR}-\eqref{E:TRANSPORTVELOCITYRELATIVETORECTANGULAR}
with suitable operators and observing cancellations.
In more than one spatial dimension, the approach of hiding the difficult terms 
in the operator $\square_g$
turns out to be crucial for understanding the formation of the shock.
Put differently, if one writes the wave equations in divergence form,
then all explicitly written inhomogeneous terms satisfy a null condition
(distinct from the one mentioned earlier in this paragraph), 
\emph{even in the presence of vorticity}!
We devote all of Sect.~\ref{S:PROOFOFTHMSTRONGNULL} to discussing this null condition
and its relation to other null conditions that have appeared in the literature.

A similar -- but much simpler -- structure had previously been found by Christodoulou--Miao \cite{dCsM2014} 
in their proof of shock formation in irrotational
(that is, vorticity free) regions. In the irrotational case, the dynamics reduces
to a single quasilinear wave equation for the fluid potential.\footnote{The fluid potential $\Phi$ is defined such that $\rd_i\Phi=-v^i$} In fact, as we further explain in Remark~\ref{rmk.irrot},
for irrotational solutions, our wave equation \eqref{E:VELOCITYWAVEEQUATION}
for the velocity $v^i$ follows as an implicit consequence of the calculations in \cite{dCsM2014}. 
More precisely, Christodoulou--Miao showed that appropriately defined variations\footnote{Roughly, a variation of
$\Phi$ is the derivative of $\Phi$ with respect to some first-order differential operator.} 
of the fluid potential satisfy \emph{homogeneous} covariant quasilinear wave equations and thus
\emph{all} of their nonlinearities are hidden in the covariant wave operator $\square_g$; see Subsect. \ref{SSS:PRELIMINARYDICUSSION}
for further discussion. We note that the first observation and use of this kind of good structure 
was made by Christodoulou in his breakthrough work on (small-data) 
shock formation \cite{dC2007} for solutions to relativistic Euler equations in $(1+3)$ dimensions in irrotational regions.

There is a long, rich history of prior result leading up to the works \cites{dC2007,dCsM2014}
and their recent extensions. Readers may consult the
survey article \cite{gHsKjSwW2016} for more details; here we 
discuss only the works that are most relevant for the present article.
Alinhac was the first \cites{sA1999a,sA1999b,sA2001b,sA2002}
to prove shock formation results
for hyperbolic PDEs in more than one spatial dimension without symmetry assumptions.
Specifically, in two and three spatial dimensions,
he proved small-data shock formation results
for wave equations of the form
$
(g^{-1})^{\alpha \beta} (\partial \Phi) \partial_{\alpha} \partial_{\beta} \Phi = 0
$
whenever the nonlinear terms
fail to satisfy Klainerman's null condition 
(which we describe in more detail in Subsubsect.~\ref{sss.small.data}).
More precisely, Alinhac exhibited a set of small data such that 
$\partial^2 \Phi$ blows up in finite time due to the intersection of characteristics,
while $\Phi$ and $\partial \Phi$ remain bounded, all the way up to the singularity.
After appropriate renormalizations, it may be seen that
all wave equations treated by Christodoulou \cite{dC2007}
and Christodoulou--Miao \cite{dCsM2014} essentially fall under the scope
of Alinhac's work. However, the approach developed by Christodoulou in \cite{dC2007}
was a big advancement over that of Alinhac for the following main reasons.
\begin{itemize}
	\item For the wave equations of irrotational compressible fluid mechanics,
		Christodoulou and Christodoulou--Miao
		gave a fully geometric description of 
		the singularity formation that, for small data, exactly ties singularity
		formation to the intersection of characteristics. That is, 
		unlike Alinhac's framework, Christodoulou's yields that
		shocks are the only possible kinds of singularities that can in principle occur
		when the data are small.
	\item Christodoulou's framework yields sharp information about
	 	the maximal classical development\footnote{Roughly, the maximal classical development 
		is the largest possible classical solution that is uniquely determined by the data; see, for example, 
		\cites{jSb2016,wW2013} for further discussion.} 
	 	of the data, including the behavior of the solution up to the boundary.
	 	As is described in \cites{dC2007,dCaL2016}, this information is essential 
	 	even to properly set up the shock development problem, which
	 	is the problem of weakly continuing the solution past the singularity.
	 	In contrast, due to fundamental technical limitations tied to his use of a Nash-Moser 
	 	energy estimate framework, Alinhac's proof breaks down precisely 
	 	at the time of first blowup and cannot be extended to yield
	 	information about the boundary of the maximal development.
	 	For similar reasons, 
	 	Alinhac's proof relies on a non-degeneracy assumption on the initial 
	 	data that ensures that there is a unique blowup point
	 	in the constant-time hypersurface of first blowup.
	 \item Many features of Christodoulou's framework
	 	are robust\footnote{We note, however, that shock formation results
	 	seem to be significantly less stable 
	 	than small-data global existence proofs
	 	under perturbations of the \emph{equations};
	 	we explore this in detail in Subsect.~\ref{SS:STRONGNULLVSOTHERNULL}.} 
	 	and have the potential to be applied to 
	 	other equations.
\end{itemize}
In view of the above remarks, it is clear why
Christodoulou's approach to proving shock formation in the irrotational case
served as the starting point for our study of shock formation in the presence of vorticity.

Another seed idea was found in the work \cite{jS2014b}, 
in which Speck proved shock formation results similar 
to those of \cites{dC2007,dCsM2014} 
for a large class of quasilinear wave equations, 
which are not necessarily homogeneous, 
\emph{as long as the inhomogeneous terms satisfy a null condition}. 
Roughly speaking, the null condition from \cite{jS2014b}
allows one to show that the nonlinear inhomogeneous terms 
do not interfere with the shock formation mechanisms and that for small compactly supported data,
no other kinds of singularities can occur prior to shock formation. 
We clarify that the null condition from \cite{jS2014b}
is visible when the wave equations are written in covariant form; if one
expresses the wave equations relative to the standard Cartesian coordinates,
then the nonlinear terms \emph{fail} to satisfy Klainerman's classic null
condition; see Subsect.~\ref{SS:STRONGNULLVSOTHERNULL} for further discussion
of the different null conditions.
Our new formulation of the compressible Euler equations, 
for which an \underline{extended notion} of such a null condition is also verified, 
\emph{even in the presence of vorticity}, 
opens the door to our forthcoming results \cites{jLjS2016b,jLjS2017}: 
showing that there is an open (relative to an appropriate Sobolev topology) 
set of regular initial data such that the solution forms a shock in finite time. 
The main novel feature of our works \cites{dC2007,dCsM2014}   
is that the vorticity is \emph{not required vanish at the shock}. 
That is, we have to control the vorticity in a neighborhood of the first singularity caused
by compression. In particular, we must rule out the onset of ``wild instabilities''
that could in principle be caused by the interaction of vorticity flow and shocks.
We summarize our results as follows.

\begin{changemargin}{.25in}{.25in} 
\textbf{Summary of forthcoming results \cites{jLjS2016b, jLjS2017}.}
In two or three spatial dimensions,\footnote{Since we will be considering solutions on spatial domains $\mathbb R \times \mathbb T$ and $\mathbb R\times \mathbb T^2$ in two and three spatial dimensions respectively, the result in two spatial dimensions is strictly weaker than the one in three spatial dimensions. 
However, since the two-spatial-dimensional-case 
contains substantial new ideas compared to the irrotational case
but is technically simpler than the three-spatial-dimensional-case,
we have treated it separately in \cite{jLjS2016b};
we hope that the two-space-dimensional result
will serve as a useful starting point
for readers interested in the case of three spatial dimensions.}
for any physical equation of state except that 
of the Chaplygin gas,\footnote{A Chaplygin gas has the equation of state $p = p(\rho)=C_0-\f{C_1}{\rho}$,
where $C_0\in \mathbb R$ and $C_1>0$.}
there exists an \underline{open} set of regular initial data,
with elements close to the data of a subset of simple plane wave solutions, 
that leads to \underline{stable} finite-time shock formation. 
The specific vorticity, which is \underline{provably non-vanishing at the shock} for some of our solutions, 
remains uniformly bounded, all the way up to the shock.
Moreover, the dynamics are ``well-described'' by the irrotational Euler equations. 
\end{changemargin}

All prior blowup results for the compressible Euler equations that allow for
non-zero vorticity are either non-constructive in nature or 
are such that the potential formulation of the Euler equations was used near the shock because the 
vorticity was \emph{provably non-zero there}; 
see \cites{tS1985,sA1993,dC2007,dCsM2014,jS2014b,jSgHjLwW2016}. 
We refer the readers to the survey paper \cite{gHsKjSwW2016} as well as our companion paper 
\cite{jLjS2016b} for further discussions on related previous works.
Importantly, we note that our work may be relevant 
for the shock development problem,\footnote{This problem was recently solved
in the spherically symmetric relativistic case \cite{dCaL2016}
starting from the state of a spherically symmetric irrotational
solution at the end of its classical lifespan, 
which was obtained by Christodoulou in \cite{dC2007} as a special case.
Away from spherical symmetry, the shock development problem 
remains open and is expected to be quite difficult.} 
that is, 
the problem of continuing the solution as a weak solution 
after a shock has formed. The reason is that upon
weakly continuing the solution,
vorticity is typically generated across the shock
hypersurface \cite{dC2007}, even if the solution is 
irrotational up to the time of first blowup. 

In our forthcoming works \cites{jLjS2016b,jLjS2017}, 
we are able to give a complete description of 
the behavior of the solution up to the onset of the first shock, 
including that of the vorticity. In particular,
relative to a system of geometric coordinates
adapted to the acoustic characteristics
(which are null hypersurfaces corresponding to sound wave propagation), 
\emph{the solution, including the vorticity, 
remains many times differentiable, 
all the way up to the shock}. Moreover, in the case of two spatial dimensions, our methods can in principle also give a description of a portion of the boundary of the maximal classical development of the solution, at least for a subclass of solutions
verifying non-degeneracy conditions of the type assumed in \cites{dC2007,dCsM2014}.\footnote{In three spatial dimensions, it remains unclear whether our methods can be extended to yield a sharp description of the
boundary of the maximal development. The main difficulty is technical in nature and is tied to
our reliance on elliptic estimates on constant-$t$ hypersurfaces
to control the top-order derivatives of the specific vorticity.}

\subsection{Preliminary overview of the role of the present work in our forthcoming proofs of shock formation}
Our secondary goal in this paper is to overview 
our forthcoming proofs of shock formation in the presence of vorticity
and to highlight the role played by our new formulation of the equations.
For reasons to be explained, we will treat the case of two and three spatial dimensions
in separate works.
Our proofs are based in part on the framework 
developed by Christodoulou \cite{dC2007} and Christodoulou--Miao\cite{dCsM2014}
in their study of shock formation in three spatial dimensions in
the irrotational case,\footnote{Strictly speaking, the solutions
in \cite{dC2007} contained vorticity. 
However, the initial conditions studied in \cites{dC2007}
led to the vorticity being confined to a region far away from the shock. 
Hence, Christodoulou did not have to confront the difficult problem 
of having to control the vorticity at the shock itself.}
on an extended version of the notion of good null structure observed in \cite{jS2014b},
and on the framework of \cite{jSgHjLwW2016},
in which the authors extended Christodoulou's results\footnote{The results of \cite{jSgHjLwW2016}
apply to a large class of wave equations, of which the 
irrotational compressible Euler equations are a special example.} 
to a new solution regime in which
the solutions are close to simple plane symmetric waves.
To control the vorticity up to the singularity, 
we exploit all of the geo-analytic structures 
revealed by Theorems~\ref{T:GEOMETRICWAVETRANSPORTSYSTEM} and \ref{T:STRONGNULL},
structures which are compatible with an extended version of Christodoulou's framework.

We now briefly summarize our new formulation 
of the compressible Euler equations
and its implications for the study of shock formation.
We revisit these issues in extended detail in Sect.~\ref{S:IDEASBEHINDSHOCKFORMATION}.
\begin{enumerate}
\item \textbf{(New formulation of the equations)}
	 The system comprises \emph{covariant} wave equations
		for the Cartesian components  
		$\lbrace v^i \rbrace_{i=1,2,3}$
		of the velocity
		and the logarithmic density $\Densrenormalized$
		coupled to a transport equation for the specific vorticity $\Vortrenormalized$;
		see Theorem~\ref{T:GEOMETRICWAVETRANSPORTSYSTEM}.
\item \textbf{(Structure of the inhomogeneous terms)} The system features inhomogeneous terms that can be split into three distinct classes:
		\begin{enumerate}
		\item Quadratic terms in the derivatives of $v^i$, $\Densrenormalized$ and $\Vortrenormalized^i$ obeying the \emph{strong null condition} 
		relative to the acoustical metric 
		$g = g(\Densrenormalized,v)$
		(which is the Lorentzian metric corresponding to the propagation of sound wave, 
		see Def.~\ref{D:ACOUSTICALMETRIC}). We exhibit the good null structure enjoyed by these terms in Theorem~\ref{T:STRONGNULL}.
		\item Products that are linear in $\lbrace \partial v^i \rbrace$
		or $\partial \Densrenormalized$, where $\partial$ denotes the spacetime gradient with respect to the 
		Cartesian coordinates.
		\item Products that are linear in 
		$\underline{\partial} \Vortrenormalized$,
		where $\underline{\partial}$ denotes the spatial gradient with respect to the 
		Cartesian coordinates.
		\end{enumerate}
		Importantly, these good structures also hold for 
		$\Flatdiv \Vortrenormalized$ and $\Flatcurl \Vortrenormalized$, 
		where $\Flatdiv$ and $\Flatcurl$ are the usual Euclidean operators.
		That is, these terms
		satisfy equations with inhomogeneous terms enjoying the \emph{same structure}
		highlighted above, which we need in order to obtain suitable estimates for $\Flatdiv \Vortrenormalized$
		and $\Flatcurl \Vortrenormalized$.
		Moreover, even though a general Cartesian spatial derivative $\underline{\partial} \Vortrenormalized$ does 
		\emph{not} obey an equation with such a good structure, 
		the information that one can obtain for
		$\Flatdiv \Vortrenormalized$ and $\Flatcurl \Vortrenormalized$ 
		(using the good structures of their equations)
		is sufficient to close, via elliptic estimates, a top-order estimate for $\Vortrenormalized$.
		Put differently, to control $\underline{\partial} \Vortrenormalized$,
		we first control the ``good terms''
		$\Flatdiv \Vortrenormalized$ and $\Flatcurl \Vortrenormalized$ 
		and then use elliptic estimates.
	\item \textbf{(Null structure is a key ingredient in the proof of shock formation)} 
		As we have mentioned, in \cite{dC2007}, Christodoulou introduced a 
		foundational geometric framework for proving shock formation in regions with vanishing vorticity.
		The main ingredient in his approach was an \emph{eikonal function} $u$,
		which solved the eikonal equation
		$(g^{-1})^{\alpha \beta} \partial_{\alpha} u \partial_{\beta} u=0$
		(see \eqref{E:EIKONAL})
		corresponding to the acoustical metric $g$, which also appears 
		in the (quasilinear) covariant wave operator $\square_g$ mentioned above.
		Christodoulou completed $u$ and the standard Cartesian time function $t$
		to new set of geometric coordinates 
		on spacetime and constructed related geometric vectorfields,
		adapted to the acoustic characteristics (that is, the level sets of $u$, which are $g$-null hypersurfaces).
		On the one hand, these geometric vectorfields can degenerate with respect to 
		the Cartesian\footnote{Actually, in the context of \cite{dC2007}, in which
		the physical spacetime was Minkowski spacetime, 
		it would be more accurate to
		refer to these coordinates as ``Minkowski rectangular coordinates.''} 
		coordinate partial 
		vectorfields as the shock forms.
		However, the big gain is that the 
		nonlinearities in the equation exhibit good null structure 
		when decomposed relative to the geometric coordinates and/or vectorfields.
		By taking advantage of this null structure,
		Christodoulou \cite{dC2007} and Christodoulou--Miao \cite{dCsM2014}
		were able to prove that relative to the geometric coordinates,
		the solution remains many times differentiable.
		At the same time, they proved that for an open set of data, 
		in finite time, 
		the geometric coordinates \emph{degenerate} relative to the Cartesian ones,
		which causes a singularity in the Cartesian coordinate partial derivatives of the solution.
		One might say that the geometric coordinates ``hide'' the singularity,
		which allows one to prove the estimates needed to show that the singularity 
		does in fact form.

	As the above discussion has suggested, in order to extend Christodoulou's framework to allow for the presence of vorticity, 
	it is important that the inhomogeneous terms should have a certain good null structure. 
	We formulate this precisely in Sect.~\ref{S:PROOFOFTHMSTRONGNULL},
	where we refer to the good null structure as the ``strong null condition.''
	The definition of the strong null condition 
	is motivated by the result that one eventually proves: one shows
	that the singularity forms in a derivative of the solution 
	in a direction transversal to the 
	acoustic characteristics, while the solution's tangential derivatives remain uniformly bounded.
	Hence, the good structure enjoyed by an inhomogeneous term product verifying
	the strong null condition is roughly the following:
	it contains at most one differentiation
	in a direction transversal to the acoustic characteristics.
	Near the singularity, such terms are \emph{weaker} 
	than the Riccati-type interactions 
	(which are quadratic in the solution's transversal derivatives)
	that are hidden in the action of the wave operator $\square_g$.
	Hence, these good terms do not interfere with the shock formation mechanisms.
	We stress that \emph{the strong null condition is fully nonlinear
	in nature and is not based on Taylor expanding nonlinearities to quadratic order}.
	We will explain the necessity of such a structure in 
	Subsubsect.~\ref{SSS:SOMEREMARKSONTHEPROOF}.
\item \textbf{(Null structure and top-order singular estimates)} 
		In quasilinear hyperbolic PDEs 
		in one spatial dimension, such as Burgers' equation, 
		it is often easy to construct
		geometric coordinates such that with respect to 
		these coordinates, the solution remains as regular as the initial data,
		all the way up to the singularity (see, for example, Footnote~\ref{FN:BURGER}).
		However, Christodoulou's framework is fraught with a challenging technical difficulty:
		even with the help of the geometric coordinates,
		it does not seem possible to ``hide'' the singularity 
		at very high derivative levels;\footnote{The same difficulty is found in Alinhac's approach
		\cites{sA1999a,sA1999b,sA2001b,sA2002} to proving shock formation.} 
		in all known results on the formation of shocks in more
		than one spatial dimension, the best estimates available allow for the possibility that the
		high-order geometric energies might blow up as the singularity forms.
		However, it is not know with certainty whether or not the 
		high-order energy blowup does in fact occur.\footnote{Note that this is a different question
		than whether or not the shock forms.}
		A key part of the analysis is to control the \emph{possible} blowup-rate
		of these energies and showing, 
		with a careful order-by-order analysis, 
		that the lower-order energies remain uniformly bounded,
		all the way up to the shock.

		In our study of shock formation with vorticity, 
		we in particular need to accommodate the 
		singular high-order energy estimates for the 
		``wave part'' of the system, which are inherited from the irrotational case.
		Perhaps not surprisingly, the energy estimates for 
		the transport part of the system
		(that is, for the specific vorticity) 
		are also allowed to blow up at the high orders.
		A critically important part of the analysis is 
		understanding how the different blowup-rates for the wave and transport
		parts are tied to each other, in view of the fact that the wave and transport variables 
		are coupled at the level of the equations.
		Put differently, we need to simultaneously study the wave and transport parts of the system, 
		perform an order-by-order analysis of the singular high-order energy estimates, 
		and close a Gronwall-type energy estimate
		that accounts for the distinct singular behavior of each part of the system
		at distinct derivative levels.
		The fact that we can close such an argument is intimately tied
		to the good null structures found in the coupling terms.
		\item \textbf{(Difficulties related to multiple speeds)}
		To prove shock formation in regions with vorticity, 
	we encounter all of the same difficulties
	that Christodoulou encountered plus two challenging new ones.
	The first of these that in the presence of vorticity, the equations contain \emph{multiple speeds}:
	the speed of sound,
	which corresponds to the acoustic characteristics (which were present in Christodoulou's work)
	and the speed of vorticity transport,
	which corresponds to the integral curves (also known as the flow lines) of the material derivative vectorfield, 
	along which the specific vorticity 
	is transported. This new difficulty is present in both two and three spatial dimensions.
	In an effort to isolate the new ideas needed to handle it, 
	we prove shock formation for solutions with vorticity in two spatial dimensions in the separate work
	\cite{jLjS2016b}; see Subsubsect.~\ref{SSS:GEOMETRICVECINTERACTWITHTRANSPORT} for an overview of these new ideas.
	Analytically, the challenge is that 
	the material derivative vectorfield $\Transport$, the Euclidean divergence, and the Euclidean curl
	do not have any relationship to the geometric vectorfields needed to commute
	the wave equations, which makes it difficult to obtain estimates for
	the geometric derivatives of $\Vortrenormalized$.
	However, it turns out that the geometric vectorfields have just enough structure 
	such that their commutator with an 
	appropriately weighted, but otherwise arbitrary,
	first-order differential operator\footnote{Here, by first-order differential operator,
	we mean one equal to a regular function times a Cartesian coordinate partial derivative.} 
	produces controllable error terms, consistent with ``hiding the singularity'' 
	relative to the geometric coordinates
	at the lower derivative levels.
	This is important because 
	\emph{we found a procedure that avoids, in the evolution equation-type estimates
	for $\Vortrenormalized$, having to commute through a second-order operator}. 

To derive suitable estimates for the specific vorticity,
we crucially rely on the geometric fact that 
$\Transport$ is transversal to the acoustic characteristics. 
This basic fact allows us to derive energy estimates for
$\Vortrenormalized$ along the characteristics
in which \emph{the energies do not feature any degenerate weights,}
which is critically important for controlling error terms.
A related fact is that the transversality condition allows us to
avoid a potential logarithmic divergence; 
see Subsubsect.~\ref{sss.inho.vor} for further discussion.
However, we cannot rely on the energy of $\Vortrenormalized$ along the characteristics 
at the top order since, 
at the top order, we are forced to derive
elliptic estimates with a degenerate weight 
along constant-time hypersurfaces; see the next point.
	\item \textbf{(Top order elliptic estimates for the vorticity)}
 The second new difficulty compared to the work of Christodoulou 
 is that in the presence of vorticity, one needs
to use elliptic estimates on $\Sigma_t$ to control the top derivatives of 
$\Vortrenormalized$.
More precisely, this difficulty is present only in
three or more spatial dimension since in two spatial dimensions, the
``vorticity stretching'' term responsible for the difficulty
is absent (that is, $\mbox{{\upshape RHS}~\eqref{E:RENORMALIZEDVORTICTITYTRANSPORTEQUATION}} \equiv 0$
in two spatial dimensions). Because of the significant innovations needed
to close the elliptic estimates near the singularity,
we will prove shock formation in the case of three spatial dimensions in a separate work.
In particular, in three spatial dimensions, one must derive elliptic estimates 
for derivatives of the vorticity in a direction transversal to the acoustic characteristics, 
which, near the singularity, is a severe technical difficulty 
that is not present in the irrotational case.
In particular, when expressed relative to the geometric coordinates,
the specific vorticity energies along $\Sigma_t$ contain degenerate weights.
Ultimately, these degenerate weights contribute to the fact that 
the top-order $L^2$ estimates for $\Vortrenormalized$
can blow up as the shock forms,
much like the energy estimates in the irrotational case.
To close the proof, \emph{we must show that blowup-rate for the transport variable $\Vortrenormalized$
is not too severe}. 
In particular, we must show that the blowup-rate 
is compatible with the corresponding blowup-rates for the wave variables,
whose top-order energies, as it turns out, 
are no more singular than they are in 
the irrotational case; see Subsubsect.~\ref{SSS:ELLIPTICESTIMATESFORVORTICITY}
for an overview of the main new ideas behind these elliptic estimates.
\end{enumerate}

\subsection{Paper outline} 
\label{SS:OUTLINE}
In Sect.~\ref{sec.main.theorem},
we provide definitions and state the two main theorems.
In Sect.~\ref{S:PROOFOFTHMSTRONGNULL},
we discuss some basic concepts from Lorentzian geometry and
prove the second theorem, which
exhibits the good null structure
enjoyed by the inhomogeneous terms in the equations.
We also compare and contrast these good null structures 
to different null structures found in the literature.
In Sect.~\ref{S:IDEASBEHINDSHOCKFORMATION},
we provide provide a preview on how the new formulation of the equations
can be used to prove a sharp shock-formation result 
in the presence of non-zero vorticity in three spatial dimensions. 
To provide context, we overview how to prove shock formation
in the irrotational case using a version Christodoulou's framework
adapted to the initial data that are close in spirit to the data
considered in \cites{jLjS2016b,jLjS2017}.
In Sect.~\ref{E:PROOFOFMAINTHEOREM}, 
we prove our main Theorem~\ref{T:GEOMETRICWAVETRANSPORTSYSTEM}
via a series of calculations.

\section{Statement of the main theorems}
\label{sec.main.theorem}

Our main goal in this section is to give a precise statement of the two main theorems. Before stating them, however, we will first introduce appropriate notations, as well as some basic geometric constructions necessary for the statements of the theorems.

\subsection{Notation}
\label{SS:NOTATION}
Throughout $\lbrace x^{\alpha} \rbrace_{\alpha=0,1,2,3}$ denotes
the usual Cartesian coordinate system on 
$\mathbb{R} \times \mathbb{R} \times \mathbb{T}^2$.
More precisely, $x^0 \in \mathbb{R}$ is the time coordinate and $(x^1,x^2,x^3) \in \mathbb{R} \times \mathbb{T}^2$
are spatial coordinates.
$
\displaystyle
\partial_{\alpha} 
:=
\frac{\partial}{\partial x^{\alpha}}
$
denotes the corresponding coordinate partial derivative vectorfields.
We often use the alternate notation $x^0 = t$ and $\partial_0 = \partial_t$.
Lowercase Greek ``spacetime'' indices such as $\alpha$ vary over $0,1,2,3$ while
lowercase Latin ``spatial'' indices such as $a$ vary over $1,2,3$. 
In later sections, we will use the convention that uppercase
Greek indices, associated to the array of solution functions, vary over $0,1,\dots,6$
and upper case Latin indices, associated to null frames, vary over $1,2,3,4$.
We use Einstein's summation convention in that repeated indices are summed
over their respective ranges. 
$\Sigma_t$ denotes the usual flat hypersurface of constant time $t$.

\subsection{Preliminary ingredients in the new formulation of the equations}
\label{SS:PRELIMINARYINGREDIENTS}

\subsubsection{Assumptions on the equation of state}
\label{SSS:EOS}
We make the following physical assumptions,
which ensure the hyperbolicity of the system when
$\rho > 0$:
\begin{itemize}
	\item $\Speed \geq 0$ .
	\item $\Speed > 0$ when $\rho > 0$.
\end{itemize}

\subsubsection{Geometric tensorfields associated to the flow}
\label{SSS:GEOMETRICTENSORFIELDS}
Roughly, there are two kinds of motion associated to compressible Euler flow:
the transporting of vorticity and the
propagation of sound waves. We now discuss the tensorfields associated 
to these phenomena.

The material derivative vectorfield $\Transport$, defined in \eqref{E:MATERIALVECTORVIELDRELATIVETORECTANGULAR}, 
is associated to the transporting of vorticity. We now define the Lorentzian metric $g$ corresponding to the propagation of sound waves.

\begin{definition}[\textbf{The acoustical metric and its inverse}] 
\label{D:ACOUSTICALMETRIC}
We define the \emph{acoustical metric} $g$ and the 
\emph{inverse acoustical metric} $g^{-1}$ relative
to the Cartesian coordinates as follows:
\begin{subequations}
	\begin{align}
		g 
		& := 
		-  dt \otimes dt
			+ 
			\Speed^{-2} \sum_{a=1}^3(dx^a - v^a dt) \otimes (dx^a - v^a dt),
				\label{E:ACOUSTICALMETRIC} \\
		g^{-1} 
		& := 
			- \Transport \otimes \Transport
			+ \Speed^2 \sum_{a=1}^3 \partial_a \otimes \partial_a.
			\label{E:INVERSEACOUSTICALMETRIC}
	\end{align}
\end{subequations}
\end{definition}

\begin{remark}
	\label{R:GINVERSEISTHEINVERSE}
	It is straightforward to verify that
	$g^{-1}$ is the matrix inverse of $g$, that
	is, we have $(g^{-1})^{\mu \alpha} g_{\alpha \nu} = \delta_{\nu}^{\mu}$,
	where $\delta_{\nu}^{\mu}$ is the standard Kronecker delta.
\end{remark}

\begin{remark}
	Other authors have defined the acoustical metric to be $\Speed^2 g$.
	We prefer our definition because it implies that $(g^{-1})^{00} = - 1$,
	which simplifies the presentation of many formulas.
\end{remark}

The vectorfield $\Transport$ enjoys some simple but important
geometric properties, which we provide in the next lemma.
\begin{lemma}[\textbf{Basic geometric properties of} $\Transport$]
	\label{L:BASICPROPERTIESOFTRANSPORT}
	$\Transport$ is timelike, 
	future-directed,\footnote{A vectorfield $V$ is future directed if 
	$V^0 > 0$, where $V^0$ is $0$ Cartesian component.
	\label{FN:FUTUREDIRECTED}} 
	$g-$orthogonal to $\Sigma_t$,
	and unit-length:\footnote{Throughout we use the notation $g(V,W) := g_{\alpha \beta}V^{\alpha} W^{\beta}$.}
	\begin{align} \label{E:TRANSPORTISUNITLENGTH}
		g(\Transport,\Transport)
		& = - 1.
	\end{align}
\end{lemma}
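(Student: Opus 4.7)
The plan is to verify the four assertions essentially by reading them off the two expressions in Definition~\ref{D:ACOUSTICALMETRIC}. Being future-directed is immediate: the definition \eqref{E:MATERIALVECTORVIELDRELATIVETORECTANGULAR} gives $\Transport^0 = 1 > 0$ in the Cartesian frame, which is Footnote~\ref{FN:FUTUREDIRECTED}'s criterion. The remaining three properties (unit length, timelike, $g$-orthogonal to $\Sigma_t$) all reduce to pairings of $\Transport$ against itself and against $\partial_1,\partial_2,\partial_3$ with respect to $g$.

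The conceptually cleanest approach is to exploit the structure of \eqref{E:INVERSEACOUSTICALMETRIC}. That formula decomposes $g^{-1}$ as
\[
g^{-1} = -\Transport \otimes \Transport + \sum_{a=1}^{3} (\Speed \partial_a) \otimes (\Speed \partial_a),
\]
i.e.\ as $\eta^{IJ} e_I \otimes e_J$ with $\eta$ the Minkowski matrix and with the four vectorfields $e_0 := \Transport$, $e_a := \Speed \partial_a$ linearly independent (using $\Speed>0$ from Subsubsect.~\ref{SSS:EOS} together with the assumption \eqref{E:DENSITYPOSITIVE}). A standard linear-algebra fact then yields $g(e_I, e_J) = \eta_{IJ}$; in particular $g(\Transport,\Transport) = -1$ and $g(\Transport, \Speed \partial_a) = 0$ for $a = 1,2,3$, hence $g(\Transport, \partial_a) = 0$. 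Since $\Sigma_t$ is spanned at each point by $\partial_1,\partial_2,\partial_3$, this delivers $g$-orthogonality to $\Sigma_t$, the normalization \eqref{E:TRANSPORTISUNITLENGTH}, and timelikeness (from $g(\Transport,\Transport) = -1 < 0$).

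As a sanity check (and the alternative I would actually write out if the reader were not expected to know the frame identity), I would compute directly from \eqref{E:ACOUSTICALMETRIC}. Reading off the components gives $g_{00} = -1 + \Speed^{-2} \delta_{ab} v^a v^b$, $g_{0i} = -\Speed^{-2} v^i$, $g_{ij} = \Speed^{-2} \delta_{ij}$; then, using $\Transport^0 = 1$ and $\Transport^i = v^i$, the cross-terms exactly telescope:
\[
g(\Transport,\Transport) = -1 + \Speed^{-2}|v|^2 - 2\Speed^{-2}|v|^2 + \Speed^{-2}|v|^2 = -1,
\]
\[
g(\Transport,\partial_i) = g_{0i} + g_{ji} v^j = -\Speed^{-2} v^i + \Speed^{-2} v^i = 0.
\]
There is no real obstacle — the only thing one must be careful about is the sign conventions for $\eta$ and the appearance of $\Speed^{-2}$ in \eqref{E:ACOUSTICALMETRIC}, which are precisely what make the cross-term cancellations work and which justify the remark following Remark~\ref{R:GINVERSEISTHEINVERSE} that the normalization $(g^{-1})^{00} = -1$ simplifies the formulas.
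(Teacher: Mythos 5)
Your proof is correct. The paper's own proof is essentially your ``sanity check'' — a direct component computation from \eqref{E:MATERIALVECTORVIELDRELATIVETORECTANGULAR} and \eqref{E:ACOUSTICALMETRIC} — so that part matches exactly. Your primary route is genuinely different and a bit slicker: you read \eqref{E:INVERSEACOUSTICALMETRIC} as an orthonormal-frame decomposition $g^{-1}=\eta^{IJ}e_I\otimes e_J$ with $e_0=\Transport$, $e_a=\Speed\,\partial_a$, observe that these four vectors are linearly independent (since $\Speed>0$ and $\Transport^0=1$), and invoke the linear-algebra fact that if $g^{-1}=E\eta E^{\mathsf T}$ with $E$ invertible then $E^{\mathsf T}gE=\eta$, i.e.\ $g(e_I,e_J)=\eta_{IJ}$. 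This delivers $g(\Transport,\Transport)=-1$ and $g(\Transport,\partial_a)=0$ simultaneously, with no telescoping of cross-terms to track. The trade-off is that the paper's direct calculation requires no knowledge of the frame identity, only the explicit $g_{\alpha\beta}$; your frame argument is more conceptual, explains \emph{why} the cross-terms in the direct computation must cancel, and would generalize immediately to any metric presented via such a decomposition of its inverse. Both handle future-directedness and timelikeness the same way (from $\Transport^0=1>0$ and $g(\Transport,\Transport)=-1<0$).
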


\begin{proof}
	Clearly $\Transport$ is future-directed.
	The identity \eqref{E:TRANSPORTISUNITLENGTH}
	(which also implies that $\Transport$ is timelike)
	follows from a simple calculation
	based on \eqref{E:MATERIALVECTORVIELDRELATIVETORECTANGULAR} and \eqref{E:ACOUSTICALMETRIC}.
	Similarly, we compute that
	$
	\displaystyle
	g(\Transport,\partial_i)
	:= g_{\alpha i}\Transport^{\alpha}
	= 0
	$
	for $i=1,2,3$, from which it follows that
	$\Transport$
	is $g-$orthogonal to $\Sigma_t$.
\end{proof}

\subsection{Statement of the main result I: Reformulation of the equations}
\label{SS:MAINRESULT.I}

We first recall the standard definition of the covariant wave operator $\square_g$.

\begin{definition}[\textbf{Covariant wave operator}]
\label{D:COVWAVEOP}
Relative to arbitrary coordinates,
the covariant wave operator $\square_g$ 
acts on scalar-valued functions $\phi$ as follows:
\begin{align} \label{E:WAVEOPERATORARBITRARYCOORDINATES}
\square_g \phi
	= \frac{1}{\sqrt{|\mbox{\upshape det} g|}}
	\partial_{\alpha}
	\left\lbrace
			\sqrt{|\mbox{\upshape det} g|} (g^{-1})^{\alpha \beta}
			\partial_{\beta} \phi
	\right\rbrace.
\end{align}
\end{definition}

Our first main result is the following theorem, which provides the new formulation
of the equations.

\begin{theorem}[\textbf{The geometric wave-transport formulation of the compressible Euler equations}]
	\label{T:GEOMETRICWAVETRANSPORTSYSTEM}
	In three spatial dimensions under a barotropic equation of state \eqref{E:BAROTROPICEOS},
	the compressible Euler equations 
	\eqref{E:TRANSPORTDENSRENORMALIZEDRELATIVETORECTANGULAR}-\eqref{E:TRANSPORTVELOCITYRELATIVETORECTANGULAR}
	imply the following system
	(see Footnote~\ref{F:VECTORFIELDSACTONFUNCTIONS})
	in $(\Densrenormalized,v^1,v^2,v^3,\Vortrenormalized^1,\Vortrenormalized^2,\Vortrenormalized^3)$,
	where the Cartesian component functions $v^i$ are 
	\textbf{treated as scalar-valued functions
	under covariant differentiation} on LHS~\eqref{E:VELOCITYWAVEEQUATION}
	and $\Transport$ is the material derivative vectorfield
	defined in \eqref{E:MATERIALVECTORVIELDRELATIVETORECTANGULAR}:
	\begin{subequations}
	\begin{align}
		\square_g v^i
		& = - \Speed^2 \exp(\Densrenormalized)  (\Flatcurl \Vortrenormalized)^i 
				+
				2 \exp(\Densrenormalized) \epsilon_{iab} (\Transport v^a) \Vortrenormalized^b
			+ \mathscr{Q}^i,
			\label{E:VELOCITYWAVEEQUATION}	\\
	\square_g \Densrenormalized
	& = \mathscr{Q},
		\label{E:RENORMALIZEDDENSITYWAVEEQUATION} \\
	\Transport \Vortrenormalized^i
	& = \Vortrenormalized^a \partial_a v^i.
	\label{E:RENORMALIZEDVORTICTITYTRANSPORTEQUATION}
	\end{align}
	\end{subequations}
	Above, $\mathscr{Q}^i$ and $\mathscr{Q}$ are the \textbf{null forms} relative to $g$, which are defined by
	\begin{subequations}
		\begin{align}
		\mathscr{Q}^i
		& := 	-(1+ \Speed^{-1} \Speed') 
					(g^{-1})^{\alpha \beta} \partial_{\alpha} \Densrenormalized \partial_{\beta} v^i,
			\label{E:VELOCITYNULLFORM} \\
		\mathscr{Q}
		& := 
		- 
		3 \Speed^{-1} \Speed' 
		(g^{-1})^{\alpha \beta} \partial_{\alpha} \Densrenormalized \partial_{\beta} \Densrenormalized
		+ 
		2 \sum_{1 \leq a < b \leq 3}
			\left\lbrace
				\partial_a v^a \partial_b v^b
					-
				\partial_a v^b \partial_b v^a
			\right\rbrace.
			\label{E:DENSITYNULLFORM}
		\end{align}
	\end{subequations}

	In addition, $\Flatdiv \Vortrenormalized$ and the scalar-valued functions
	$(\Flatcurl \Vortrenormalized)^i$
	verify the following equations:
	\begin{subequations}
	\begin{align} \label{E:FLATDIVOFRENORMALIZEDVORTICITY}
	\Flatdiv \Vortrenormalized
	& = - \Vortrenormalized^a \partial_a \Densrenormalized,
		\\
	\Transport (\Flatcurl \Vortrenormalized)^i
	& = (\exp \Densrenormalized) \Vortrenormalized^a \partial_a \Vortrenormalized^i
			- (\exp \Densrenormalized) \Vortrenormalized^i \Flatdiv \Vortrenormalized
			+ \mathscr{P}_{(\Vortrenormalized)}^i,
			\label{E:EVOLUTIONEQUATIONFLATCURLRENORMALIZEDVORTICITY}
\end{align}
\end{subequations}
where $\mathscr{P}_{(\Vortrenormalized)}^i$ is defined by
\begin{align} \label{E:VORTICITYNULLFORM}
	\mathscr{P}_{(\Vortrenormalized)}^i
	& := \epsilon_{iab} 
			 \left\lbrace
				(\partial_a \Vortrenormalized^c) \partial_c v^b
				- 
				(\partial_a v^c) \partial_c \Vortrenormalized^b
			 \right\rbrace.
\end{align}
\end{theorem}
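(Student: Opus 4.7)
The plan is to derive each of the five equations \eqref{E:VELOCITYWAVEEQUATION}--\eqref{E:EVOLUTIONEQUATIONFLATCURLRENORMALIZEDVORTICITY} by differentiating the first-order system \eqref{E:TRANSPORTDENSRENORMALIZEDRELATIVETORECTANGULAR}--\eqref{E:TRANSPORTVELOCITYRELATIVETORECTANGULAR} with carefully chosen operators and identifying covariant/null-form cancellations. As a preliminary, I would compute $\sqrt{|\mbox{\upshape det}\, g|} = \Speed^{-3}$ directly from \eqref{E:ACOUSTICALMETRIC} (e.g., by block-diagonalizing against the $\Sigma_t$-direction $\Transport$), so that Def.~\ref{D:COVWAVEOP} expands to
\begin{align*}
\square_g \phi
= \partial_\alpha\!\left((g^{-1})^{\alpha\beta}\partial_\beta \phi\right)
- 3\Speed^{-1}\Speed'\,(g^{-1})^{\alpha\beta}(\partial_\alpha \Densrenormalized)(\partial_\beta \phi),
\end{align*}
and, using \eqref{E:INVERSEACOUSTICALMETRIC} together with $\Transport\Densrenormalized = -\Flatdiv v$, the first term on the right decomposes as $-\Transport\Transport\phi + (\Transport\Densrenormalized)(\Transport\phi) + \Speed^2 \Delta \phi + 2\Speed\Speed'\,\delta^{ab}(\partial_a\Densrenormalized)(\partial_b \phi)$.

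To derive \eqref{E:RENORMALIZEDDENSITYWAVEEQUATION}, I would apply $\Transport$ to \eqref{E:TRANSPORTDENSRENORMALIZEDRELATIVETORECTANGULAR}, commute using $[\partial_a,\Transport] = (\partial_a v^b)\partial_b$, and then eliminate $\partial_a\Transport v^a$ using \eqref{E:TRANSPORTVELOCITYRELATIVETORECTANGULAR}. Substituting the resulting expression for $\Transport\Transport\Densrenormalized$ into the above expansion of $\square_g \Densrenormalized$ cancels all second-order terms; what remains is quadratic in first derivatives. A brief rearrangement --- using the identity $(\Flatdiv v)^2 - (\partial_a v^b)(\partial_b v^a) = 2\sum_{1 \leq a < b \leq 3}\{\partial_a v^a \partial_b v^b - \partial_a v^b \partial_b v^a\}$ --- reproduces exactly \eqref{E:DENSITYNULLFORM}.

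For \eqref{E:VELOCITYWAVEEQUATION}, the analogous calculation applied to \eqref{E:TRANSPORTVELOCITYRELATIVETORECTANGULAR} produces $\Transport\Transport v^i$, but now cancellation of the second-order pieces is incomplete: one is left with the combination $\Speed^2(\Delta v^i - \partial_i \Flatdiv v)$, which I would rewrite via the vector-calculus identity $\Delta v^i - \partial_i \Flatdiv v = -(\Flatcurl(\Flatcurl v))^i$. Substituting $\Flatcurl v = \exp(\Densrenormalized)\Vortrenormalized$ (from Def.~\ref{D:MODIFIEDVARIABLES}) and expanding $\Flatcurl(\exp(\Densrenormalized)\Vortrenormalized)$ produces the explicit term $-\Speed^2 \exp(\Densrenormalized)(\Flatcurl\Vortrenormalized)^i$, together with a cross term $-\Speed^2\exp(\Densrenormalized)\epsilon_{iab}(\partial_a\Densrenormalized)\Vortrenormalized^b$ that --- after using $\epsilon_{iab}\epsilon_{bcd} = \delta_{ic}\delta_{ad} - \delta_{id}\delta_{ac}$ --- redistributes into debris of the form $(\partial_a \Densrenormalized)(\partial_i v^a)$ and $(\partial_a \Densrenormalized)(\partial_a v^i)$. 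The main obstacle I anticipate is now to assemble all of this quadratic debris, plus the explicit piece $2\exp(\Densrenormalized)\epsilon_{iab}(\Transport v^a)\Vortrenormalized^b$, into the covariant form of the claimed right-hand side. The key regrouping identity is $(g^{-1})^{\alpha\beta}(\partial_\alpha \Densrenormalized)(\partial_\beta v^i) = -(\Transport \Densrenormalized)(\Transport v^i) + \Speed^2 \delta^{ab}(\partial_a \Densrenormalized)(\partial_b v^i)$, together with repeated use of \eqref{E:TRANSPORTVELOCITYRELATIVETORECTANGULAR} to trade $\Transport v^i$-factors for $-\Speed^2\partial_i \Densrenormalized$-factors (and of \eqref{E:TRANSPORTDENSRENORMALIZEDRELATIVETORECTANGULAR} for $\Transport \Densrenormalized \leftrightarrow -\Flatdiv v$). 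The central bookkeeping task is to verify that the net coefficient of the assembled null form is exactly $-(1+\Speed^{-1}\Speed')$, combining the contributions from $-\Transport\Transport v^i$, from the $2\Speed\Speed'\,\delta^{ab}(\partial_a\Densrenormalized)(\partial_b v^i)$ piece inside $\partial_\alpha((g^{-1})^{\alpha\beta}\partial_\beta v^i)$, and from the $-3\Speed^{-1}\Speed'$ correction in $\square_g$.

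The vorticity identities follow from complementary manipulations. Applying $\Flatcurl$ to \eqref{E:TRANSPORTVELOCITYRELATIVETORECTANGULAR} annihilates the pressure term (both the symmetric second-derivative part and the $2\Speed\Speed'(\partial_a \Densrenormalized)(\partial_b \Densrenormalized)$ part, each symmetric against $\epsilon_{iab}$), and commuting $\Flatcurl$ past $\Transport$ via $[\partial_a,\Transport]$ yields the classical barotropic vorticity equation $\Transport \omega^i = \omega^a \partial_a v^i - \omega^i \Flatdiv v$; renormalizing by $\exp(-\Densrenormalized)$ and using $\Transport\Densrenormalized = -\Flatdiv v$ produces \eqref{E:RENORMALIZEDVORTICTITYTRANSPORTEQUATION}. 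Equation \eqref{E:FLATDIVOFRENORMALIZEDVORTICITY} follows immediately from $\Flatdiv \omega = 0$ (since $\omega = \Flatcurl v$) together with the product rule applied to $\Vortrenormalized = \exp(-\Densrenormalized)\omega$. Finally, \eqref{E:EVOLUTIONEQUATIONFLATCURLRENORMALIZEDVORTICITY} comes from applying $\Flatcurl$ to \eqref{E:RENORMALIZEDVORTICTITYTRANSPORTEQUATION}: the $[\partial_a,\Transport]$ commutator generates precisely the antisymmetrized cross terms $\epsilon_{iab}\{(\partial_a\Vortrenormalized^c)\partial_c v^b - (\partial_a v^c)\partial_c\Vortrenormalized^b\}$ that constitute $\mathscr{P}_{(\Vortrenormalized)}^i$, while the remaining ``vorticity-stretching'' pieces are re-expressed in terms of $\Vortrenormalized$ (rather than $\omega$) via a second use of \eqref{E:FLATDIVOFRENORMALIZEDVORTICITY}.
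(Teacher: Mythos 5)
Your proposal is correct and takes essentially the same route as the paper's proof: you first derive an explicit Cartesian-coordinate expression for $\square_g$ using $\sqrt{|\det g|}=\Speed^{-3}$ (equivalent to the paper's Lemma~\ref{L:COVARIANTWAVEOPRELATIVETOCARTESIAN}), then substitute $\Transport\Transport\Densrenormalized$ and $\Transport\Transport v^i$ computed from \eqref{E:TRANSPORTDENSRENORMALIZEDRELATIVETORECTANGULAR}--\eqref{E:TRANSPORTVELOCITYRELATIVETORECTANGULAR}, and obtain the vorticity identities by curling the velocity equation, renormalizing, and then curling \eqref{E:RENORMALIZEDVORTICTITYTRANSPORTEQUATION}, exactly as in the paper (your double-curl identity $\Delta v^i-\partial_i\Flatdiv v=-(\Flatcurl\Flatcurl v)^i$ is just the paper's antisymmetrized-gradient step in different clothing). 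The bookkeeping you flag does close: regrouping via $(g^{-1})^{\alpha\beta}\partial_{\alpha}\Densrenormalized\,\partial_{\beta}v^i=-(\Transport\Densrenormalized)\Transport v^i+\Speed^2\delta^{ab}\partial_a\Densrenormalized\,\partial_b v^i$ and trading $\Transport v^i\leftrightarrow-\Speed^2\delta^{ia}\partial_a\Densrenormalized$ produces exactly the coefficient $-(1+\Speed^{-1}\Speed')$ and the factor $2$ in front of $\exp(\Densrenormalized)\epsilon_{iab}(\Transport v^a)\Vortrenormalized^b$ in \eqref{E:VELOCITYWAVEEQUATION}.
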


\begin{remark}[\textbf{The structure of the term} $\mathscr{P}_{(\Vortrenormalized)}^i$]
	\label{R:DELICATEVORTICITYQUDRATICTERM}
	As written, the term $\mathscr{P}_{(\Vortrenormalized)}^i$ from \eqref{E:VORTICITYNULLFORM}
	does not have the special null structure that is essential
	for applications to shock formation.
	However, by using equation \eqref{E:RENORMALIZEDVORTICTITYTRANSPORTEQUATION}
	for substitution, one can show that cancellations occur,
	which yields the desired null structure;
	see the proof of Theorem~\ref{T:STRONGNULL}.
\end{remark}

\begin{remark}[\textbf{Simplified equations in two spatial dimensions}]
\label{R:SIMPLIFIEDEQUATIONSIN2D}
In two spatial dimensions, 
the equations simplify considerably
due to the absence of vorticity stretching.
Specifically, 
$\mbox{{\upshape RHS}~\eqref{E:RENORMALIZEDVORTICTITYTRANSPORTEQUATION}} \equiv 0$
for solutions that are independent of $x^3$ and have $v^3 \equiv 0$.
Consequently, one does not need to use equations 
\eqref{E:FLATDIVOFRENORMALIZEDVORTICITY}-\eqref{E:EVOLUTIONEQUATIONFLATCURLRENORMALIZEDVORTICITY}
when deriving estimates in two spatial dimensions.
\end{remark}

\begin{remark}[\textbf{The irrotational case}]\label{rmk.irrot}
For data with vanishing vorticity, the solution verifies $\Vortrenormalized \equiv 0$,
as long as it remains $C^1$.
For such solutions, the system of equations from Theorem \ref{T:GEOMETRICWAVETRANSPORTSYSTEM}
becomes a system of quasilinear wave equations 
whose right-hand sides consist only of
quadratic null forms relative to the acoustical metric $g$. 
We note that in particular, 
the equations from Theorem \ref{T:GEOMETRICWAVETRANSPORTSYSTEM}
yield, in the irrotational case,
the wave equations 
derived in \cite{dCsM2014}, 
but without the need to introduce a fluid potential. 
More precisely, in \cite{dCsM2014},
Christodoulou--Miao showed
that all Cartesian coordinate partial derivatives of the fluid potential 
$\Phi$, which verifies\footnote{We also note the equation
$
\displaystyle
\partial_t \Phi- \frac{1}{2}(\partial_1 \Phi)^2=h
$,
where $h$ is the enthalpy, defined such that $dh = \Speed^2 \,d \Densrenormalized$.
} 
$\partial_i\Phi=-v^i$, satisfy homogeneous
covariant quasilinear wave equations,
where the metric is conformal to the acoustical metric $g$ of Def. \ref{D:ACOUSTICALMETRIC}. 
We note that it is easy to show that a conformal change of the metric 
changes the wave equation, but only by generating a semilinear term 
that is proportional to a $g$-null form (as defined in Def.~\ref{D:NULLFRAME}).
Thus, in the irrotational case, $-v^i$, being a Cartesian derivative of $\Phi$, 
satisfies a quasilinear wave equation whose inhomogeneous term
exhibit the desired null structure.\footnote{It can also be directly verified using \eqref{E:TRANSPORTDENSRENORMALIZEDRELATIVETORECTANGULAR}-\eqref{E:TRANSPORTVELOCITYRELATIVETORECTANGULAR} that in the absence of vorticity, \eqref{E:VELOCITYWAVEEQUATION} is equivalent to $\square_{\tilde g} v^i=0$, where 
$\tilde{g} : =\exp(\Densrenormalized) \Speed g$ is a metric conformal to $g$.} We also note that in the irrotational case,
the calculations of \cite{dCsM2014} could be extended to yield our wave equation \eqref{E:RENORMALIZEDDENSITYWAVEEQUATION}
for $\Densrenormalized$; however, the calculations would be slightly more involved since 
$\Densrenormalized$ is a nonlinear function of the 
spacetime Cartesian coordinate partial derivatives
$\partial_{\alpha} \Phi$.
\end{remark}

\begin{remark}[\textbf{The data are constrained}]
\label{R:CONSTRAINTS}
If we think of 
$(\Densrenormalized,v^1,v^2,v^3,\Vortrenormalized^1,\Vortrenormalized^2,\Vortrenormalized^3)$
as independent scalar-valued functions, then the initial data for the mixed-order system
\eqref{E:VELOCITYWAVEEQUATION}-\eqref{E:RENORMALIZEDVORTICTITYTRANSPORTEQUATION}
and 
\eqref{E:FLATDIVOFRENORMALIZEDVORTICITY}-\eqref{E:EVOLUTIONEQUATIONFLATCURLRENORMALIZEDVORTICITY}
are 
$(\Densrenormalized,v^1,v^2,v^3,\Vortrenormalized^1,\Vortrenormalized^2,\Vortrenormalized^3)|_{t=0}$
and
$(\partial_t \Densrenormalized,\partial_t v^1,\partial_t v^2,\partial_t v^3)|_{t=0}$.
However, in order to be consistent with the compressible Euler equations
\eqref{E:TRANSPORTDENSRENORMALIZEDRELATIVETORECTANGULAR}-\eqref{E:TRANSPORTVELOCITYRELATIVETORECTANGULAR},
the data must verify ``constraints.''
Specifically, 
$\lbrace \Vortrenormalized^i \rbrace_{i=1,2,3}|_{t=0}$ is determined
in terms of $\Densrenormalized$ and 
$\lbrace \partial_j v^i \rbrace_{i,j=1,2,3}|_{t=0}$
by equation \eqref{E:MODIFIEDVARIABLES},
while
$\partial_t \Densrenormalized|_{t=0}$
and
$\lbrace \partial_t v^i \rbrace_{i=1,2,3}|_{t=0}$
are determined in terms of
$\Densrenormalized|_{t=0}$,
$\lbrace v^i \rbrace_{i=1,2,3}|_{t=0}$,
$\lbrace \partial_i \Densrenormalized \rbrace|_{i=1,2,3}|_{t=0}$,
and
$\lbrace \partial_j v^i \rbrace_{i,j=1,2,3}|_{t=0}$
via the compressible Euler equations
\eqref{E:TRANSPORTDENSRENORMALIZEDRELATIVETORECTANGULAR}-\eqref{E:TRANSPORTVELOCITYRELATIVETORECTANGULAR}.

In our forthcoming work on shock formation, we consider initial data for the system
\eqref{E:VELOCITYWAVEEQUATION}-\eqref{E:RENORMALIZEDVORTICTITYTRANSPORTEQUATION}
and 
\eqref{E:FLATDIVOFRENORMALIZEDVORTICITY}-\eqref{E:EVOLUTIONEQUATIONFLATCURLRENORMALIZEDVORTICITY}
that verify tensorial smallness/largeness conditions;
see Subsect.~\ref{SS:PREVIEWONSHOCKS} for more details.
We of course must ensure that our smallness/largeness conditions 
are consistent with the constraints.
Aside from that, the fact that the data are constrained is a minor issue
since our main interest in Theorem~\ref{T:GEOMETRICWAVETRANSPORTSYSTEM} 
is that it provides equations that are useful for
deriving a priori estimates for solutions.
\end{remark}

\subsection{Statement of the main result II: Strong null condition}
\label{SS:MAINRESULT.II}

Our second main theorem sharply characterizes the null structure 
of the inhomogeneous terms in the above system.
The theorem refers to the ``strong null condition,'' which we rigorously define
in Def.~\ref{D:STRONGNULLCONDITION}. As we have mentioned, 
the strong null condition roughly states that none 
of the inhomogeneous term products contain two factors involving differentiations
transversal to the acoustic characteristics.

\begin{theorem}[\textbf{The inhomogeneous terms verify the strong null condition}]
	\label{T:STRONGNULL}
	For \emph{solutions}\footnote{Theorem~\ref{T:STRONGNULL}
	is valid only for solutions in the sense that the proof relies on using
	\eqref{E:RENORMALIZEDVORTICTITYTRANSPORTEQUATION} for algebraic substitution
	in order to exhibit the desired structure for the term
	$\mathscr{P}_{(\Vortrenormalized)}^i$ 
	from \eqref{E:VORTICITYNULLFORM}; see Remark~\ref{R:DELICATEVORTICITYQUDRATICTERM}.
	\label{FN:THEOREMISVALIDONLYFORSOLUTIONS}} 
	to \eqref{E:VELOCITYWAVEEQUATION}-\eqref{E:RENORMALIZEDVORTICTITYTRANSPORTEQUATION}
	and
	\eqref{E:FLATDIVOFRENORMALIZEDVORTICITY}-\eqref{E:EVOLUTIONEQUATIONFLATCURLRENORMALIZEDVORTICITY},
	the inhomogeneous terms 
	on the right-hand sides of the equations consist of two types: 
	\textbf{i)} terms that are manifestly linear in the first derivatives of 
	$(\Densrenormalized,v^1,v^2,v^3,\Vortrenormalized^1,\Vortrenormalized^2,\Vortrenormalized^3)$ and 
	\textbf{ii)} terms that can be expressed 
	as products that are quadratic in the first derivatives of 
	$(\Densrenormalized,v^1,v^2,v^3,\Vortrenormalized^1,\Vortrenormalized^2,\Vortrenormalized^3)$
	and that \textbf{verify the strong null condition} 
	(see Def.~\ref{D:STRONGNULLCONDITION}) relative to the acoustical metric $g$
	(see Def.~\ref{D:ACOUSTICALMETRIC}).
\end{theorem}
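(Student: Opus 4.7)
My approach is to inspect each inhomogeneous term on the right-hand sides of \eqref{E:VELOCITYWAVEEQUATION}--\eqref{E:RENORMALIZEDVORTICTITYTRANSPORTEQUATION} and \eqref{E:FLATDIVOFRENORMALIZEDVORTICITY}--\eqref{E:EVOLUTIONEQUATIONFLATCURLRENORMALIZEDVORTICITY} and to sort it either into type (i), meaning manifestly linear in the first derivatives of the solution array $(\Densrenormalized,v,\Vortrenormalized)$, or into type (ii), meaning quadratic in first derivatives with good null structure relative to $g$. The first task is essentially bookkeeping: the terms $-\Speed^2\exp(\Densrenormalized)(\Flatcurl \Vortrenormalized)^i$, $2\exp(\Densrenormalized)\epsilon_{iab}(\Transport v^a)\Vortrenormalized^b$, $\Vortrenormalized^a\partial_a v^i$, $-\Vortrenormalized^a\partial_a \Densrenormalized$, $(\exp\Densrenormalized)\Vortrenormalized^a\partial_a\Vortrenormalized^i$, and $(\exp\Densrenormalized)\Vortrenormalized^i\Flatdiv\Vortrenormalized$ each contain only a single derivative of the solution with the remaining factors being smooth functions of $(\Densrenormalized,v,\Vortrenormalized)$ themselves, and hence belong to type (i).

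The genuinely quadratic terms reside in $\mathscr{Q}^i$, $\mathscr{Q}$, and $\mathscr{P}_{(\Vortrenormalized)}^i$. For the $g$-inverse contractions $(g^{-1})^{\alpha\beta}\partial_\alpha \Densrenormalized\,\partial_\beta v^i$ in \eqref{E:VELOCITYNULLFORM} and $(g^{-1})^{\alpha\beta}\partial_\alpha \Densrenormalized\,\partial_\beta \Densrenormalized$ in \eqref{E:DENSITYNULLFORM}, I would expand $(g^{-1})^{\alpha\beta}$ in a null frame adapted to the acoustic characteristics; because $\Lunit$ and $\uLunit$ are both $g$-null, the inverse metric has no $\uLunit\uLunit$ component, so at least one factor of every resulting product is differentiated in a tangential direction, which is exactly the content of the strong null condition. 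For the Euclidean skew combination $\sum_{a<b}\{\partial_a v^a\partial_b v^b - \partial_a v^b\partial_b v^a\}$ in $\mathscr{Q}$, I will expand each Cartesian $\partial_a$ in the null frame and verify that the antisymmetrization over $(a,b)$ causes every $\uLunit\uLunit$ contribution to cancel in pairs, which is the standard mechanism by which a classical Minkowski null form translates into a $g$-null form.

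The real obstacle is the term $\mathscr{P}_{(\Vortrenormalized)}^i$ from \eqref{E:VORTICITYNULLFORM}, which, as highlighted in Remark~\ref{R:DELICATEVORTICITYQUDRATICTERM}, does not manifestly exhibit any null structure: both $\partial_a \Vortrenormalized^c$ and $\partial_c v^b$ (and likewise in the second summand) can a priori involve differentiations transversal to the acoustic characteristics. The plan is to exploit the fact, established in Lemma~\ref{L:BASICPROPERTIESOFTRANSPORT}, that $\Transport$ is timelike and hence transversal to these characteristics, so that any transversal null-frame derivative is expressible as a $\Transport$-derivative modulo tangential directions. After decomposing $\partial_a \Vortrenormalized^c$ and $\partial_a v^c$ in the null frame, I would use the transport equation \eqref{E:RENORMALIZEDVORTICTITYTRANSPORTEQUATION}, $\Transport \Vortrenormalized^c = \Vortrenormalized^d\partial_d v^c$, to trade the bad transversal component $\uLunit \Vortrenormalized^c$ for $\Vortrenormalized^d\partial_d v^c$ plus purely tangential derivatives of $\Vortrenormalized$; the first of these is a type (i) contribution (the $\Vortrenormalized^d$ factor absorbs into the coefficient), while the second leaves at most one transversal differentiation in the remaining quadratic product.

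The bookkeeping in this final step is the delicate part: one must track the $\epsilon_{iab}$ antisymmetrization, which produces essential cancellations between the two summands of $\mathscr{P}_{(\Vortrenormalized)}^i$, and one must verify that the substitution via \eqref{E:RENORMALIZEDVORTICTITYTRANSPORTEQUATION} does not secretly reintroduce $\uLunit\uLunit$-type pieces through the factor $\Vortrenormalized^d\partial_d v^c$. After these manipulations, I expect the verification to reduce to a sum of the two kinds of $g$-null forms already handled above, plus terms linear in the first derivatives of the solution, thereby completing the classification and establishing the theorem.
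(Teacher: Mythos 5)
Your proposal is correct and follows essentially the same route as the paper's proof: the linear terms are sorted out by inspection, the null forms $\mathscr{Q}^g(\cdot,\cdot)$ and the antisymmetric Cartesian combination in $\mathscr{Q}$ are handled respectively by the null-frame expansion of $g^{-1}$ and by antisymmetry of the frame coefficients, and $\mathscr{P}_{(\Vortrenormalized)}^i$ is handled by trading the dangerous null-direction derivatives of $\Vortrenormalized$ for $\Transport \Vortrenormalized$ (possible because $\Transport$ is timelike, hence has nonvanishing components along both $\Lunit$ and $\uLunit$) and then substituting the transport equation \eqref{E:RENORMALIZEDVORTICTITYTRANSPORTEQUATION}. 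Two small clarifications: since Def.~\ref{D:STRONGNULLCONDITION} forbids both the $\uLunit\uLunit$ and the $\Lunit\Lunit$ diagonal products, the same trade must be performed in both null directions, and the decisive cancellation is not produced by the $\epsilon_{iab}$ antisymmetrization (nor is the substituted factor $\Vortrenormalized^d \partial_d v^c$ a type (i) term once it sits inside the quadratic product) --- rather, after substitution the two summands of $\mathscr{P}_{(\Vortrenormalized)}^i$ produce identical diagonal terms with opposite signs, which cancel exactly, just as in the paper.
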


\begin{remark}[\textbf{Exact decompositions are important}]
	\label{R:SIGNIFICANCEOFSTRONGNULL}
	Given Theorem~\ref{T:GEOMETRICWAVETRANSPORTSYSTEM}, 
	Theorem~\ref{T:STRONGNULL} is a simple result. However,
	its significance for the study of shock formation is profound.
	Roughly speaking, terms verifying the strong null condition
	do not prevent the shock from forming.
	The special structures associated to the strong null condition
	become visible only relative to 
	an exact (as opposed to approximate) null frame adapted to $g$.
	That is, when proving shock formation,
	\emph{there seems to be no room for error when decomposing nonlinear terms}.
	This is in stark contrast to many
	problems for nonlinear wave equations in which 
	small-data global existence holds.
	In those problems, there is often
	room for error in the decompositions
	and it is often possible to prove small-data global existence
	by decomposing nonlinear terms
	relative to a null frame adapted to a background metric.
	The background geometry allows for a drastically simplified approach to deriving estimates.
	We explore these issues in more detail in 
	Remark~\ref{R:STRONGNULLFULLYNONLINEAR},
	Subsect.~\ref{SS:STRONGNULLVSOTHERNULL},
	and Subsubsect.~\ref{SSS:SOMEREMARKSONTHEPROOF}.

\end{remark}

\section{The strong null condition and proof of Theorem~\ref{T:STRONGNULL}}
\label{S:PROOFOFTHMSTRONGNULL}
In this section, we provide some basic geometric background 
and prove Theorem~\ref{T:STRONGNULL}, which shows that the 
appropriate inhomogeneous terms from Theorem~\ref{T:GEOMETRICWAVETRANSPORTSYSTEM}
verify the strong null condition. We also compare and contrast the
strong null condition to distinct null structures found in other problems.

\subsection{Null frames, null forms, and the strong null condition}
\label{SS:NULLSTUFFANDPROOFOFSECONDTHEOREM}
Our main goal in this subsection is to define the strong null condition.
We first provide some standard background material.

\begin{definition}[\textbf{Null frame}]
	\label{D:NULLFRAME}
	Let $g$ be a Lorentzian metric on\footnote{The topology of the spacetime manifold is not relevant for our discussion here.} 
	$\mathbb{R} \times \mathbb{R} \times \mathbb{T}^2$.
	A $g-$\emph{null frame} (``null frame'' for short, when the metric is clear)
	at a point $p$ is a set of vectors 
	\begin{align} \label{E:NULLFRAME}
		\mathscr{N} := \lbrace \Lunit,\uLunit,e_1,e_2 \rbrace
	\end{align}
	belonging to the tangent space of $\mathbb{R} \times \mathbb{R} \times \mathbb{T}^2$ at $p$
	with
	\begin{subequations}
	\begin{align}
	g(\Lunit,\Lunit) 
	& = g(\uLunit,\uLunit) = 0,
		&& 
		\label{E:NULLFRAMEVECTORFIELDSLENGTHZERO} \\
	g(\Lunit,\uLunit) &= -2,
		&& 
		\label{E:NULLPAIRINNERPRODUCTMINUS2} \\
	g(\Lunit,e_A) &= g(\uLunit,e_A) = 0, 
	&& (A=1,2),
		\label{E:NULLVECTORFIELDSORTHOGONALTOSPACELIKEONES} \\
	g(e_A,e_B) &= \delta_{AB},
	&& (A,B=1,2),
	\label{E:SPACELIKEVECTORFIELDSORTHONORMAL}
	\end{align}
	\end{subequations}
	where $\delta_{AB}$ is the standard 
	Kronecker delta.
\end{definition}

The following lemma is a consequence of Def.~\ref{D:NULLFRAME};
we omit the simple proof.

\begin{lemma}[\textbf{Decomposition of $g^{-1}$ relative to a null frame}]
	\label{L:DECOMPOFGINVERSERELATIVETONULLFRAME}
	Relative to an arbitrary $g-$null frame, we have
	\begin{align} \label{E:DECOMPOFGINVERSERELATIVETONULLFRAME}
		g^{-1} 
		& = - \frac{1}{2} \Lunit \otimes \uLunit
				- \frac{1}{2} \Lunit \otimes \uLunit
				+ \sum_{A=1}^2 e_A \otimes e_A.
	\end{align}
\end{lemma}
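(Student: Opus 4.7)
The plan is straightforward: I would reduce the identity to a single matrix-inversion computation by exploiting the fact that the null frame $\mathscr{N}=\{L,\uLunit,e_1,e_2\}$ is a basis for the tangent space $T_p(\mathbb{R}\times\mathbb{R}\times\mathbb{T}^2)$. Linear independence of the four vectors is immediate from \eqref{E:NULLFRAMEVECTORFIELDSLENGTHZERO}--\eqref{E:SPACELIKEVECTORFIELDSORTHONORMAL}: the Gram matrix of inner products that these relations prescribe has nonzero determinant (equal to $-4$), so a linear dependence among $L,\uLunit,e_1,e_2$ would force the vanishing of all pairings with each of them, which is incompatible with $g$ being non-degenerate. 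Hence the four vectors span $T_p$, and consequently any symmetric $(2,0)$-tensor at $p$ admits a unique expansion in symmetric products of these basis vectors.

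Next, I would make the ansatz
\begin{align*}
g^{-1} \;=\; \sum_{i,j=1}^{4} A^{ij}\, E_i \otimes E_j,
\qquad
(E_1,E_2,E_3,E_4) := (L,\uLunit,e_1,e_2),
\end{align*}
and determine the constants $A^{ij}$ using the defining relation $(g^{-1})^{\mu\alpha}g_{\alpha\nu}=\delta^\mu_\nu$ recalled in Remark~\ref{R:GINVERSEISTHEINVERSE}. Setting $g_{ij}:=g(E_i,E_j)$, this relation, expressed in the frame basis, becomes $A^{ik}g_{kj}=\delta^i_j$, so the matrix $(A^{ij})$ must be the inverse of the Gram matrix $(g_{ij})$.

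From \eqref{E:NULLFRAMEVECTORFIELDSLENGTHZERO}--\eqref{E:SPACELIKEVECTORFIELDSORTHONORMAL}, the Gram matrix is block-diagonal,
\begin{align*}
(g_{ij}) \;=\;
\begin{pmatrix}
0 & -2 & 0 & 0 \\
-2 & 0 & 0 & 0 \\
0 & 0 & 1 & 0 \\
0 & 0 & 0 & 1
\end{pmatrix},
\qquad
\text{with inverse}
\qquad
(A^{ij}) \;=\;
\begin{pmatrix}
0 & -\tfrac{1}{2} & 0 & 0 \\
-\tfrac{1}{2} & 0 & 0 & 0 \\
0 & 0 & 1 & 0 \\
0 & 0 & 0 & 1
\end{pmatrix}.
\end{align*}
Reading off the nonzero entries gives $A^{12}=A^{21}=-\tfrac{1}{2}$ and $A^{33}=A^{44}=1$, which is exactly the claimed decomposition (with the understanding that the repeated $\Lunit\otimes\uLunit$ in \eqref{E:DECOMPOFGINVERSERELATIVETONULLFRAME} is the symmetrized pair $\Lunit\otimes\uLunit+\uLunit\otimes\Lunit$).

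This proof presents no real obstacle: once the observation that $\mathscr{N}$ is a basis is in hand, the computation reduces to inverting a $4\times 4$ block-diagonal matrix. The only step requiring any care is justifying the basis expansion, and that step is handled uniformly by the non-degeneracy of $g$.
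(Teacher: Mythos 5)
Your proof is correct and is the natural one: the paper omits the argument entirely (``we omit the simple proof''), and your approach---expanding $g^{-1}$ in the frame basis and inverting the $4\times 4$ Gram matrix that Def.~\ref{D:NULLFRAME} prescribes---is the standard way to verify it. You also correctly note that the displayed formula \eqref{E:DECOMPOFGINVERSERELATIVETONULLFRAME} repeats the term $-\frac{1}{2}\Lunit\otimes\uLunit$ and should be read as the symmetrized pair $-\frac{1}{2}\Lunit\otimes\uLunit - \frac{1}{2}\uLunit\otimes\Lunit$, which is what the Gram-matrix inversion actually yields. One small remark on phrasing: the linear-independence step is cleaner if stated directly as ``a nonzero Gram determinant forces linear independence,'' since a relation $\sum_i c_i E_i = 0$ paired against each $E_j$ puts $c$ in the kernel of the Gram matrix; the detour through non-degeneracy of $g$ is unnecessary (the invertibility of the Gram matrix does the work by itself).
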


\begin{definition}[\textbf{Decomposition of a derivative-quadratic nonlinear term relative to a null frame}]
	\label{D:DECOMPOSINGNONLINEARTERMRELATIVETONULLFRAME}
	Let $\vec{V} 
	:= 
	(\Densrenormalized,
	v^1,v^2,v^3,\Vortrenormalized^1,\Vortrenormalized^2,\Vortrenormalized^3)$
	be the array of unknowns in the system
	\eqref{E:VELOCITYWAVEEQUATION}-\eqref{E:RENORMALIZEDVORTICTITYTRANSPORTEQUATION}
	and 
	\eqref{E:FLATDIVOFRENORMALIZEDVORTICITY}-\eqref{E:EVOLUTIONEQUATIONFLATCURLRENORMALIZEDVORTICITY}.
	We label the components of $\vec{V}$ by $V^0=\Densrenormalized$, $V^i=v^i$, $V^{i+3}=\Vortrenormalized^i$ for $i=1,2,3$.
	Let $\mathcal{N}(\vec{V},\partial \vec{V})$
	be a smooth nonlinear term
	that is quadratically nonlinear in $\partial \vec{V}$.
	That is, we assume that 
	$\mathcal{N}(\vec{V},\partial \vec{V})
	=\smoothfunction(\vec V)_{\Theta\Gamma}^{\alpha \beta}\partial_{\alpha} V^\Theta \partial_{\beta} V^\Gamma$, 
	where $\smoothfunction(\vec V)_{\Theta\Gamma}^{\alpha \beta}$ is symmetric in $\Theta$ and $\Gamma$ 
	and is a smooth function of $\vec V$ 	(\emph{not} necessarily vanishing at $0$) 
	for $\alpha,\beta=0,1,2,3$ and $\Theta,\Gamma=0,1,\dots,6$.
	Given a null frame $\mathscr{N}$ as defined in Def.~\ref{D:NULLFRAME}, 
	we denote
	\[
\mathscr{N} := \lbrace e_1, e_2, e_3:= \uLunit, e_4 := \Lunit \rbrace.
\]
Moreover, we let $M_{\alpha}^A$ be the scalar functions
corresponding to expanding the Cartesian coordinate partial derivative vectorfield
$\partial_{\alpha}$ at $p$ relative to the null frame, that is,
\[
\partial_{\alpha} = \sum_{A=1}^4 M_{\alpha}^A e_A. 
\]
	Then\footnote{Here and below, we use the Einstein's summation convention, where uppercase Latin indices 
	such as $A$ and $B$
	vary over $1,2,3,4$, 
	lowercase Latin ``spatial'' indices such as $a$ and $b$ vary over $1,2,3$,
	uppercase Greek indices such as $\Theta$ and $\Gamma$
	vary over $0,1,\dots,6$, and lowercase Greek ``spacetime'' such as $\alpha$ and $\beta$
	indices vary over $0,1,2,3$. \label{FN:INDEXCONVENTIONS}}
	\begin{align} \label{E:NONLINEARTERMDECOMPOSEDRELATIVETONULLFRAME}
		\mathcal{N}_{\mathscr{N}} :=\smoothfunction(\vec V)_{\Theta\Gamma}^{\alpha \beta} 
			M_{\alpha}^A M_{\beta}^B (e_A V^\Theta)(e_B V^\Gamma)
	\end{align}
	denotes the nonlinear term
	obtained by expressing $\mathcal{N}(\vec{V},\partial \vec{V})$ in terms of the derivatives
	of $\vec{V}$ with respect to the elements of $\mathscr{N}$, that is, by
	expanding $\partial \vec{V}$ as a linear combination of 
	the derivatives of $\vec{V}$
	with respect to the elements
	of $\mathscr{N}$ and substituting the expression for the factor $\partial \vec{V}$
	in $\mathcal{N}(\vec{V},\partial \vec{V})$.
\end{definition}

We are now ready to state our main definition.

\begin{definition}[\textbf{Strong null condition}]
	\label{D:STRONGNULLCONDITION}
	Let $\mathcal{N}(\vec{V},\partial \vec{V})$ 
	be as in Def. \ref{D:DECOMPOSINGNONLINEARTERMRELATIVETONULLFRAME}. We say that $\mathcal{N}(\vec{V},\partial \vec{V})$ 
	verifies the \emph{strong null condition} relative to $g$ if
	the following condition holds: for \emph{every} $g-$null frame $\mathscr{N}$,
	$\mathcal{N}_{\mathscr{N}}$ can be expressed in a form that depends linearly (or not at all)
	on $\Lunit \vec{V}$ and $\uLunit \vec{V}$. That is, 
	there exists scalars 
	$\overline{\smoothfunction}_{\Theta\Gamma}^{AB}(\vec V)$ and $\underline{\smoothfunction}_{\Theta\Gamma}^{AB}(\vec V)$ 
	such that
	$$\overline{\smoothfunction}_{\Theta\Gamma}^{33}(\vec V)=\overline{\smoothfunction}_{\Theta\Gamma}^{44}(\vec V)=0,\quad 
	\underline{\smoothfunction}_{\Theta\Gamma}^{33}(\vec V)=\underline{\smoothfunction}_{\Theta\Gamma}^{44}(\vec V)=0$$
	and such that the following hold:
	\begin{equation}\label{strong.null.def}
	\begin{split}
	\smoothfunction(\vec V)_{\Theta\Gamma}^{\alpha \beta} M_{\alpha}^3 M_{\beta}^3 
	(e_3 V^\Theta)(e_3 V^\Gamma)=&\overline{\smoothfunction}_{\Theta\Gamma}^{AB}(\vec V)(e_A V^\Theta)(e_B V^\Gamma),\\
	\smoothfunction(\vec V)_{\Theta\Gamma}^{\alpha \beta} M_{\alpha}^4 M_{\beta}^4 (e_4 V^\Theta)(e_4 V^\Gamma)=&\underline{\smoothfunction}_{\Theta\Gamma}^{AB}(\vec V)(e_A V^\Theta)(e_B V^\Gamma).  
	\end{split}
	\end{equation}
	\end{definition}

\begin{remark}[\textbf{The strong null condition is not based on truncations}]
	\label{R:STRONGNULLFULLYNONLINEAR}
	Since the equations of Theorem~\ref{T:GEOMETRICWAVETRANSPORTSYSTEM} are such that
	the inhomogeneous terms are at most quadratic in the derivatives of the unknowns,
	we have given a definition of the strong null condition (Def.~\ref{D:STRONGNULLCONDITION}) 
	only for such nonlinearities. 
	If one were trying to study shock formation in a larger class of systems, then
	one could extend the definition of the strong null condition 
	to higher-order nonlinear terms.
	However, 
	if the definition were to be relevant for the proof of shock formation,
	then it would have to account for the exact structure of the higher-order terms.
	The reason is that one generally expects that in systems featuring
	quadratic \emph{and} cubic-or-higher-order (in the solution's derivatives) 
	terms, \emph{all terms} need to have special structure
	in order for a proof of shock formation to go through. 
	This is in contrast to Klainerman's original formulation \cite{sK1984}
	of a null condition in the context of small-data global-existence problems in $(1+3)$ dimensions, 
	which is based on truncated Taylor expansions 
	of the nonlinearities in which the cubic 
	and higher-order terms do not matter; see discussion in Section 
	\ref{SS:STRONGNULLVSOTHERNULL}. 
	That is, the structures needed to close a proof of shock formation 
	are less stable and are close in spirit to the ones that seem to be needed
	in low-regularity problems
	(see Subsubsect.~\ref{sss.low.regularity} for further discussion).
	One should perhaps not be too surprised by this, since, 
	even in the simple case of the Riccati ODE $\dot{y}= y^2$,
	the nature of solutions can be drastically altered
	by the addition of terms proportional to $y^3$, $y^4$, $y^5$, etc.

\end{remark}

\begin{remark}[\textbf{The strong null condition is adapted to the acoustical metric}]
By definition, the strong null condition depends on the acoustical metric $g$. 
Prior works on quasilinear wave equations indicate 
that such a structure is useful, 
and often indispensable, 
for handling the wave part of the system.
However, 
it is not a priori obvious that a null structure \emph{adapted to $g$} 
is also crucial for controlling
the inhomogeneous nonlinear terms in the transport equation for $\Flatcurl \Vortrenormalized$;
the principal part of the transport equation has no obvious connection
to the covariant wave operator $\square_g$. 
Nonetheless, as we will show in our forthcoming works 
on shock formation, the strong null condition is indeed the right condition, 
since the singularity formation is driven 
by the wave part of the system and not the transport part.
\end{remark}

It is well-known that there is a class of nonlinearities, associated to the standard null forms, which obey the strong null condition. 
We now recall the definition of the standard null forms.

\begin{definition}[\textbf{Standard null forms}]
	\label{D:NULLFORMS}
	The standard null forms $\mathscr{Q}^g(\cdot,\cdot)$
	(relative to $g$)
	and
	$\mathscr{Q}_{(\alpha \beta)}(\cdot,\cdot)$
	act on pairs $(\phi,\widetilde{\phi})$
	of scalar-valued functions as follows:
	\begin{subequations}
		\begin{align}
		\mathscr{Q}^g(\partial \phi, \partial \widetilde{\phi})
		&:= (g^{-1})^{\alpha \beta} \partial_{\alpha} \phi \partial_{\beta} \widetilde{\phi},
			\label{E:Q0NULLFORM} \\
		\mathscr{Q}_{(\alpha \beta)}(\partial \phi, \partial \widetilde{\phi})
		& = \partial_{\alpha} \phi \partial_{\beta} \widetilde{\phi}
			-
			 \partial_{\alpha} \widetilde{\phi} \partial_{\beta} \phi.
			\label{E:QALPHABETANULLFORM}
		\end{align}
	\end{subequations}
\end{definition}

It is well-known that the standard null forms obey the strong null condition.
For completeness, we will give a proof of this fact as part of the proof of Theorem~\ref{T:STRONGNULL},
given below in Subsect.~\ref{SS:PROOFOFTHEOREMSTRONGNULL}.

\begin{remark}[\textbf{In the new formulation, 
the derivative-quadratic nonlinear terms are 
\emph{not} all standard null forms}]\label{rmk.notallnullforms}
As we mentioned above, the standard null forms (relative to $g$) of Def.~\ref{D:NULLFORMS}
satisfy the strong null condition of Def.~\ref{D:STRONGNULLCONDITION}. 
In fact, if one requires the stronger condition that the cancellation structure occurs for all functions instead of just solutions to the system, that is, if one requires (compare with \eqref{strong.null.def})
$$\smoothfunction(\vec V)_{\Theta\Gamma}^{\alpha \beta}
M_{\alpha}^3 M_{\beta}^3
=\smoothfunction(\vec V)_{\Theta\Gamma}^{\alpha \beta} M_{\alpha}^4 M_{\beta}^4=0,\quad\mbox{for all }\Theta, \Gamma=0,1,\dots, 6,$$
then it is an easy exercise to show that the nonlinearities must be linear combinations
the standard null forms relative to $g$, with coefficients depending on $\vec{V}$. 
In our setting,
while most of the nonlinear terms in the system
\eqref{E:VELOCITYWAVEEQUATION}-\eqref{E:RENORMALIZEDVORTICTITYTRANSPORTEQUATION}
and 
\eqref{E:FLATDIVOFRENORMALIZEDVORTICITY}-\eqref{E:EVOLUTIONEQUATIONFLATCURLRENORMALIZEDVORTICITY}
have the desired good null structure because they are linear combinations of the standard null forms
relative to $g$, this is \emph{not} the case for all of them. 
In particular, the term $\mathscr{P}_{(\Vortrenormalized)}^i$ from equation
\eqref{E:EVOLUTIONEQUATIONFLATCURLRENORMALIZEDVORTICITY}
verifies the strong null condition \emph{only when $\vec V$ is a solution to the compressible Euler equations};
see the proof in Sect.~\ref{SS:PROOFOFTHEOREMSTRONGNULL} for further details.
\end{remark}

\subsection{Comparing and contrasting the strong null condition to null structures found in other contexts}
\label{SS:STRONGNULLVSOTHERNULL}

The notion of a ``null condition'' was first introduced by Klainerman \cite{sK1984} 
in his study of small-data global existence for systems of nonlinear wave equations in
three spatial dimensions. Ever since, his null condition 
and related ones have been ubiquitous in the analysis of nonlinear wave equations. 
They lie at the heart of many spectacular advances, including the stability of Minkowski spacetime, global regularity for critical geometric wave equations, and the formation of trapped surfaces, just to name a few. As we have mentioned and will further discuss, 
the shock formation results of Christodoulou \cites{dC2007,dCsM2014} and Speck \cite{jS2014b} 
also rely on a type of null condition,\footnote{Although the quasilinear terms, on the other hand, necessarily violate Klainerman's null condition in view of shock formation.} 
and obtaining a deeper understanding it 
lies at the heart of our forthcoming work on shock formation in the presence of vorticity. 
In this subsection, we briefly describe various notions of null conditions 
and compare/contrast them with the strong null condition of Def.~\ref{D:STRONGNULLCONDITION}.

\subsubsection{Small-data global existence problems}\label{sss.small.data}
As we mentioned above, a notion of a null condition was first introduced \cite{sK1984} in the context of small-data global existence problems in $(1+3)$ dimensions. His notion, which we will call the \emph{classic null condition}, is based on 
Taylor expanding the nonlinear terms up to quadratic order.
A foundational result, due independently to Christodoulou \cite{dC1986a}
and Klainerman \cite{sK1986}, states that if the nonlinearities in the equation satisfy the classic null
condition, then all sufficiently small initial data give rise to global solutions.
In the wake of \cites{sK1984,sK1986,dC1986a}, 
many extensions of these results have been proved.
Perhaps the most spectacular of these is the monumental work of Christodoulou--Klainerman \cite{dCsK1993}, 
who showed that Minkowski spacetime 
is stable as a solution to the Einstein vacuum equations. 
In particular, a key to the result is that for solutions to Einstein's equations,
the Bianchi equations, viewed as a system of evolution equation for the Weyl curvature tensor, 
exhibit a good null structure, similar to the structure introduced in \cite{sK1984}, 
but adapted to the dynamic spacetime metric.

The key insights behind Klainerman's classic null condition are 
\textbf{i)} solutions to the linear wave equation on $\mathbb{R}^{1+3}$, 
when differentiated with respect to different elements of a (canonical) null frame,\footnote{Such a null frame is adapted to 
flat Minkowski light cones with vertices at the (spatial) origin.} decay with different rates 
and \textbf{ii)} most importantly from the point of view of analysis,
the classic null condition excludes the presence of the most slowly decaying quadratic terms. 
In fact, in small-data global existence problems,
cubic and higher-order terms decay much faster and thus are easier to control.
It is for the latter reason that the classic null condition is concerned only with 
quadratic terms obtained from Taylor expanding the nonlinearities.

In addition to the classic null condition,
other kinds of null structures have been identified as being relevant
in the context of small-data global existence problems.
Moreover, like the classic null condition,
these notions of a null structure typically allow for\footnote{This is perhaps not surprising in view of the fact that small-data
global existence proofs are typically closable because there is a margin of error
in the estimates.} 
a ``margin of error.'' An important example is found in the study of
the Einstein vacuum equations in the wave coordinate gauge.
In this gauge, the equations violate the classic null condition, 
but still possess a structure that is a particular case of 
Lindblad--Rodnianski's \emph{weak null condition} \cite{hLiR2003}; 
see also \cites{sA2003,hL2008}. 
In \cite{hLiR2010}, Lindblad--Rodnianski
exploited the weak null condition to 
give a proof of the stability of Minkowski spacetime in the wave coordinate gauge. 
Remarkably, while the true dynamic metric is not the Minkowski metric 
(in fact, the true null cones provably \emph{diverge logarithmically} from their Minkowskian counterparts!), 
they nonetheless are able to control the nonlinear terms by relying on
on a weak null condition whose formulation was tied to the geometry of the background Minkowski metric. 
In addition, their weak null condition was not sensitive to the presence of most cubic terms.
Moreover, their proof of small-data global existence relied
only on vectorfields adapted to the Minkowskian characteristics 
(that is, the standard flat light cones)
-- and not the characteristics of the dynamic metric. Their approach,
which was drastically simpler than the original approach of Christodoulou-Klainerman \cite{dCsK1993},
was viable in part because even though some error terms are allowed to grow in time,
the growth is sufficiently slow and can be suitably controlled. 
This is in stark contrast to the situation encountered in the proof of shock formation,
which we describe in Subsubsect.~\ref{sec.null.euler}; near the shock singularity, 
one seems to need a null condition adapted exactly to the relevant metric 
(that is, the acoustical metric of Def.~\ref{D:ACOUSTICALMETRIC})
with no margin of error.

\subsubsection{Low-regularity problems}\label{sss.low.regularity}
Another class of problems for which standard null forms 
play an important role is low-regularity problems. 
Specifically, many remarkable global low-regularity results have been achieved
for various semilinear wave equations with standard null form nonlinearities.
Examples include wave maps, Maxwell--Klein--Gordon equations, 
and Yang--Mills equations \cites{sKmM1994,sKmM1995,tT20082009,jSdT2010,jKwS2012,jKjL2015,sjOdT2015}. 
A crucial ingredient in these results is bilinear estimates, 
for which the full structure of the nonlinearity, 
as opposed to only its quadratic part, has to be exploited. 
In fact, typical derivative-cubic terms, while completely benign in the context of Subsubsect.~\ref{sss.small.data}, 
would invalidate the proofs if the equations were modified to include them.

In a recent breakthrough, Klainerman--Rodnianski--Szeftel \cite{sKiRjS2015} extended 
the above low-regularity techniques to the Einstein vacuum equations, which, in an appropriate gauge, constitute a \emph{quasilinear} system of wave equations for which the semilinear terms are standard null forms. Their main result was a proof of the bounded $L^2$ curvature conjecture, which asserts that local existence of solutions to the Einstein vacuum equations holds true as long as the initial data have curvature in $L^2$. Their proof crucially relies on the fact that the nonlinear terms are standard null forms adapted to the dynamic metric $g$ (which occurs in the principal part of the equation), with no margin of error. 
In particular, the weak null condition of Lindblad--Rodnianski, while useful for small data global existence problems, seems irrelevant for these kinds of problems. Indeed, Ettinger--Lindblad recently showed \cite{bEhL2016}
that in the wave coordinate gauge, such a low-regularity local existence result fails.

As a final example of quasilinear wave equation for which a null condition plays a crucial role, we mention the monumental work of Christodoulou \cite{dC2009} on the Einstein vacuum equations, 
in which he showed that trapped surfaces can form dynamically
and moreover, their formation is stable.
In this work, Christodoulou introduced the \emph{short pulse method}.
More precisely, he introduced a small parameter $\delta$ such that the data are supported in a region
of $\delta$-size null affine length and obey a tensorial hierarchy of smallness-largeness estimates, 
where sizes are measured in terms of powers of $\delta^{-1}$. 
Christodoulou showed \cite{dC2009} that due to the remarkable null structure of the equations
in the double null foliation gauge, this hierarchy of large and small quantities can be propagated by the
flow of the equations long enough for a trapped surface to form. 
In his work, it was important that the good null structure 
was adapted exactly to the dynamic metric $g$ in
a manner similar to the discussions in the previous two paragraphs.
In fact, as was pointed out in \cite{jLiR2013}, 
this problem can be viewed as a low-regularity problem, 
since it is only for a very rough norm that the data are bounded independent of $\delta$.

\subsubsection{Null condition in the setting of compressible Euler equations with vorticity}\label{sec.null.euler}
As we have already mentioned in the introduction, 
in the previous works on shock formation as well as in our forthcoming work,
the special null structure of the inhomogeneous terms
is one of the key ingredients in the proofs.
In those works, although the initial data are regular, 
some low-order standard Sobolev norm 
(defined with respect to the Cartesian coordinate partial derivative vectorfields)
of the solution blows up (for example, the standard $H^1$ norms of $v^i$ and $\rho$
blow up in \cite{jLjS2016b}).
For this reason, the authors need to control the solution 
up to a time when this low-order standard Sobolev norm
blows up; it is only relative to a special \emph{low-regularity} 
norm involving directionally dependent powers of a geometric weight
(specifically, the weight $\upmu$ defined in \eqref{E:UPMUDEF}), 
designed specifically to capture the geometry of the shock,
that the solution remains bounded.
It is therefore not surprising that our strong null condition 
(see Def.~\ref{D:STRONGNULLCONDITION})
shares many similarities to the null
conditions mentioned in Subsubsect.~\ref{sss.low.regularity}
(as opposed to the null condition of Subsubsect.~\ref{sss.small.data}). 
In particular, 
it is not surprising that our
definition of the strong null condition refers to exact $g$--null frames, 
where $g$ is the acoustical metric of Def.~\ref{D:ACOUSTICALMETRIC}. 
Indeed, in the proof of shock formation, 
one must use vectorfields adapted to the true characteristics 
(as opposed to approximate ones) in order to avoid incurring uncontrollable error terms.
Moreover, our condition cannot be based on truncated Taylor expansion, since
the proof is very sensitive to cubic and higher-order terms.

We note that there are two new features of the strong null condition for the compressible Euler equations. 
First, it appears to be the first instance of a null condition that 
is \emph{not} based on truncations
and that plays a crucial role in a problem involving quasilinear wave equations coupled 
to another quasilinear equation of a \emph{different characteristic speed}. 
Second, as we already emphasized in Remark \ref{rmk.notallnullforms}, 
the strong null condition can accommodate some new nonlinearities
that are not standard null forms. However, the cancellations needed to exhibit
the good null structure of these new nonlinearities occur only for 
\emph{solutions} to the system.
This is somewhat reminiscent
of the cancellations tied to the use of
the wave coordinate gauge in the Lindblad--Rodnianski
proof \cite{hLiR2010} of the stability of Minkowski spacetime.
That is, in both cases, some of the special null structures found 
in the equations
(which are needed to close the proofs) 
occur only for solutions.
We note, however, that in our formulation of the compressible Euler equations,
the cancellations are fully nonlinear and not tied to a gauge
choice, which is different than the situation encountered in \cite{hLiR2010}.

\subsection{Proof of Theorem~\ref{T:STRONGNULL}}
\label{SS:PROOFOFTHEOREMSTRONGNULL}
It is easy to see that the terms
on the right-hand sides of equations
\eqref{E:VELOCITYWAVEEQUATION}-\eqref{E:RENORMALIZEDVORTICTITYTRANSPORTEQUATION}
and
\eqref{E:FLATDIVOFRENORMALIZEDVORTICITY}-\eqref{E:EVOLUTIONEQUATIONFLATCURLRENORMALIZEDVORTICITY}
consist of three types: type \textbf{i)} terms (as defined in the statement of the theorem),
quadratic terms consisting of linear combinations 
(with coefficients depending on $\vec{V}$)
of the standard null forms 
\eqref{E:Q0NULLFORM}-\eqref{E:QALPHABETANULLFORM} acting on the elements of $\vec{V}$,
and the terms
$\mathscr{P}_{(\Vortrenormalized)}^i$
defined in \eqref{E:VORTICITYNULLFORM}. It thus suffices to consider the standard null forms and the term $\mathscr{P}_{(\Vortrenormalized)}^i$.

\noindent{\underline{\textbf{Standard null forms satisfy the strong null condition:}}}

From the formula \eqref{E:DECOMPOFGINVERSERELATIVETONULLFRAME},
which is valid for an arbitrary $g-$null frame,
it is clear the terms of the form $\mathscr{Q}^g(\cdot,\cdot)$ verify the strong null condition.

To handle the terms of the form $\mathscr{Q}_{(\alpha \beta)}(\cdot,\cdot)$,
we denote the null frame \eqref{E:NULLFRAME} by
\[
\mathscr{N} := \lbrace e_1, e_2, e_3:= \uLunit, e_4 := \Lunit \rbrace.
\]
Since the null frame spans the tangent space at each point where it is defined, 
we can express, for $\alpha = 0,1,2,3$,
\begin{equation}\label{M.def}
\partial_{\alpha} = \sum_1^4 M_{\alpha}^A e_A, 
\end{equation}
where
the $M_{\alpha}^A$ are scalar-valued functions.
Then 
\[
\mathscr{Q}_{(\alpha \beta)}(\partial \phi,\partial \widetilde{\phi})
= \sum_{A,B=1}^4
	\left\lbrace 
		M_{\alpha}^A M_{\beta}^B 
		-
		M_{\alpha}^B M_{\beta}^A
	\right\rbrace
	(e_A \phi) 
	e_B \widetilde{\phi}.
\]
The term in braces is antisymmetric in $A$ and $B$ and thus there
are no diagonal terms $(e_A \phi) e_A \widetilde{\phi}$
present in the sum. In particular, terms proportional to
$(\uLunit \phi) \uLunit \widetilde{\phi}$
and
$(\Lunit \phi) \Lunit \widetilde{\phi}$ are not present.
It follows that the terms
$\mathscr{Q}_{(\alpha \beta)}(\cdot,\cdot)$ verify the strong null condition.

\noindent{\underline{\textbf{$\mathscr{P}_{(\Vortrenormalized)}^i$ satisfies the strong null condition:}}}

It remains for us to analyze the terms
\begin{align} \label{E:VORTICITYQUADRATICTERMNULLFRAMEEXPANDED}
 \mathscr{P}_{(\Vortrenormalized)}^i 
& =  	\sum_{A,B=1}^4
			 \epsilon_{iab}
				M_a^A
				M_c^B
			\left\lbrace
				(e_A \Vortrenormalized^c) e_B v^b
				- 
				(e_A v^c) e_B \Vortrenormalized^b
			 \right\rbrace
\end{align}
from \eqref{E:VORTICITYNULLFORM}, where $M_a^A$ is as in \eqref{M.def}.
Note that all terms on RHS~\eqref{E:VORTICITYQUADRATICTERMNULLFRAMEEXPANDED}
are allowable under the strong null condition except for
\begin{align} \label{E:EXCEPTIONALTERMS}
	& \epsilon_{iab} M_a^3 M_c^3 (e_3 \Vortrenormalized^c) e_3 v^b
		-
		\epsilon_{iab} M_a^3 M_c^3 (e_3 v^c) e_3 \Vortrenormalized^b
		\\
	& \ \ 
		+ 
		\epsilon_{iab} M_a^4 M_c^4 (e_4 \Vortrenormalized^c) e_4 v^b
		- 
		\epsilon_{iab} M_a^4 M_c^4 (e_4 v^c) e_4 \Vortrenormalized^b.
	\notag
\end{align}
To handle the terms in 
\eqref{E:EXCEPTIONALTERMS},
we will use equation \eqref{E:RENORMALIZEDVORTICTITYTRANSPORTEQUATION} for substitution.
We start by expanding the material derivative vectorfield (see \eqref{E:MATERIALVECTORVIELDRELATIVETORECTANGULAR})
relative to the null frame:
$
\Transport 
= \sum_{A=1}^4 \upbeta^A e_A
$, 
where the $\upbeta^A$ are scalar-valued functions. 
From \eqref{E:TRANSPORTISUNITLENGTH} and
\eqref{E:NULLFRAMEVECTORFIELDSLENGTHZERO}-\eqref{E:SPACELIKEVECTORFIELDSORTHONORMAL},
we find that the product $\upbeta^3 \upbeta^4$ verifies
\[
\upbeta^3 \upbeta^4 \neq 0,
\]
and thus both $\upbeta^3$ and $\upbeta^4$ are non-vanishing.
Hence, 
using the expansion
$
\Transport 
= \sum_{A=1}^4 \upbeta^A e_A
$,
we can replace the factors 
$e_3 \Vortrenormalized^c$,
$e_3 \Vortrenormalized^b$,
$e_4 \Vortrenormalized^c$,
and
$e_4 \Vortrenormalized^b$
in \eqref{E:EXCEPTIONALTERMS} 
respectively with
$(1/\upbeta^3) \Transport \Vortrenormalized^c$,
$(1/\upbeta^3) \Transport \Vortrenormalized^b$,
$(1/\upbeta^4) \Transport \Vortrenormalized^c$,
and
$(1/\upbeta^4) \Transport \Vortrenormalized^b$,
up to terms that are allowable under the strong null condition.
It remains for us to analyze
\begin{align} \label{E:MOREEXCEPTIONALTERMS}
	& \epsilon_{iab} (1/\upbeta^3) M_a^3 M_c^3 (\Transport \Vortrenormalized^c) e_3 v^b
		-
		\epsilon_{iab} (1/\upbeta^3) M_a^3 M_c^3 (e_3 v^c) \Transport \Vortrenormalized^b
		\\
	& \ \ 
		+ 
		\epsilon_{iab} (1/\upbeta^4) M_a^4 M_c^4 (\Transport \Vortrenormalized^c) e_4 v^b
		- 
		\epsilon_{iab} (1/\upbeta^4) M_a^4 M_c^4 (e_4 v^c) \Transport \Vortrenormalized^b.
	\notag
\end{align}
To handle the terms on the first line of \eqref{E:MOREEXCEPTIONALTERMS},
we use equation \eqref{E:RENORMALIZEDVORTICTITYTRANSPORTEQUATION} to replace
$
\Transport \Vortrenormalized^c 
$
with
$
\upomega^d M_d^3 e_3 v^c
$
and
$
\Transport \Vortrenormalized^b
$
with
$
\upomega^d M_d^3 e_3 v^b
$,
up to terms that are allowable under the strong null condition.
After substitution, the terms on the
first line of \eqref{E:MOREEXCEPTIONALTERMS}
become, up to terms that are allowable under the strong null condition,
$
\epsilon_{iab} (1/\upbeta^3) M_a^3 M_c^3 \upomega^d M_d^3 (e_3 v^c) e_3 v^b
-
\epsilon_{iab} (1/\upbeta^3) M_a^3 M_c^3 \upomega^d M_d^3 (e_3 v^c) e_3 v^b
=
0
$;
this identity is the key cancellation in the proof.
Similarly, to handle the terms on the second line of \eqref{E:MOREEXCEPTIONALTERMS},
we can use equation \eqref{E:RENORMALIZEDVORTICTITYTRANSPORTEQUATION} to replace
$
\Transport \Vortrenormalized^c 
$
with
$
\upomega^d M_d^4 e_4 v^c
$
and
$
\Transport \Vortrenormalized^b
$
with
$
\upomega^d M_d^4 e_4 v^b
$,
up to terms that are allowable under the strong null condition, and then argue as above.
This completes the proof of Theorem~\ref{T:STRONGNULL}.

$\hfill \qed$

\section{Ideas behind the proof of shock formation and its connection to null structure}
\label{S:IDEASBEHINDSHOCKFORMATION}
In this section, we overview, without proof, how the structures
revealed by Theorems~\ref{T:GEOMETRICWAVETRANSPORTSYSTEM} and \ref{T:STRONGNULL}
are used in our forthcoming results \cites{jLjS2016b,jLjS2017}
on stable shock-formation result for the compressible Euler equations
in regions containing vorticity. That discussion is located Subsect.~\ref{SS:PREVIEWONSHOCKS}.
In the preliminary Subsect.~\ref{SS:GOODNULLINPROOFOFSHOCKFORMATION}, we recall 
Christodoulou's framework \cite{dC2007} for proving shock formation in the irrotational case;
the framework also plays an important role in our works \cites{jLjS2016b,jLjS2017}.
We do not provide any proofs in the irrotational case either; 
detailed proofs, tailored to the discussion below,
are located in \cite{jSgHjLwW2016}, which provided an extension of Christodoulou's result \cite{dC2007}
to treat a new regime of initial data (see below for more details).
We focus mainly on the work \cite{jSgHjLwW2016}
rather than \cite{dC2007} because, for reasons explained below,
some aspects of it are simpler to implement.
Readers may also consult the survey article \cite{gHsKjSwW2016} for additional discussion
on Christodoulou's result \cite{dC2007} and related ones.

\subsection{The case of the irrotational wave equations and related quasilinear wave equations}
\label{SS:GOODNULLINPROOFOFSHOCKFORMATION}
In this subsection, we describe the main ideas behind the proof of shock formation
in solutions to a general class of quasilinear wave equations
that includes, as a special case, the irrotational compressible Euler equations.

\subsubsection{Preliminary discussion concerning the equations}
\label{SSS:PRELIMINARYDICUSSION}
In \cite{dC2007}, Christodoulou provided a complete description
of the formation of shocks for perturbations
(belonging to a suitable high-order Sobolev space) 
of the non-vacuum constant state solutions
to the equations of (special) relativistic fluid mechanics in three spatial dimensions 
in regions with vanishing vorticity.
His results hold for any barotropic equation of state\footnote{There is precisely one exceptional equation of state for the irrotational special relativistic Euler equations
			for which the shock-formation results do not hold.
			The exceptional equation of state corresponds to the Lagrangian 
			$\mathscr{L} = 1 - \sqrt{1 + (m^{-1})^{\alpha \beta} \partial_{\alpha} \Phi \partial_{\beta} \Phi}$,
			where $m$ is the Minkowski metric. 
			It is exceptional because it is the only Lagrangian for relativistic fluid mechanics 
			such that Klainerman's null condition is satisfied for perturbations near the non-vacuum
			constant states. Due to the null condition, small-data global existence holds
			\cite{hL2004}.
			A similar statement holds for the non-relativistic compressible Euler equations; 
			see \cite{dCsM2014}*{Sect.~2.2} for more information.
			\label{FN:EXCEPTIONALLAGRANGIANS}} 
and were extended to the non-relativistic compressible Euler equations in \cite{dCsM2014}.
In both the relativistic and non-relativistic cases, 
under an arbitrary barotropic equation of state, 
the dynamics in the irrotational case
reduce to a quasilinear wave equation
of Euler-Lagrange type for a potential function $\Phi$.
The equation can be written relative to Cartesian coordinates 
in the following non-Euler-Lagrange form:
\begin{align} \label{E:CHRISTODOULOUWAVEEQN}
	(g^{-1})^{\alpha \beta}(\partial \Phi) \partial_{\alpha} \partial_{\beta} \Phi
	& = 0,
\end{align}
where the form of the Cartesian metric component functions 
$g_{\alpha\beta} = g_{\alpha\beta}(\partial \Phi)$ 
is determined by the equation of state.
The Lorentzian ``spacetime'' metric $g$, whose inverse appears in \eqref{E:CHRISTODOULOUWAVEEQN}, 
may be viewed as a $4 \times 4$ symmetric matrix of
signature $(-,+,+,+)$. The metric $g$ is the exact analog of the
acoustical metric from Def.~\ref{D:ACOUSTICALMETRIC}.

It turns out that to prove shock formation for solutions to equation \eqref{E:CHRISTODOULOUWAVEEQN}, 
it is convenient to differentiate the equation one time with Cartesian coordinate partial derivatives.
This motivates the following definition, $(\nu = 0,1,2,3)$:
\begin{align} \label{E:PSICOMPONENTSDEF}
	\vec{\Psi}
	& := (\Psi_0,\Psi_1,\Psi_2,\Psi_3),
	\qquad
	\Psi_{\nu}
	:= \partial_{\nu} \Phi.
\end{align}
In \cite{dC2007}, Christodoulou showed that by differentiating the irrotational wave
equations of relativistic fluid mechanics
with Cartesian coordinate partial derivatives
(see \cite{dCsM2014} for the same result in the case of the non-relativistic compressible Euler equations),
one obtains the following system\footnote{More precisely, Christodoulou had to rescale
$g$ by a conformal factor in order to bring the equation
into the form \eqref{E:DIFFCHRISTODOULOUWAVEEQN},
but that detail is not important for our discussion.}
on $\mathbb{R}^{1+3}$:
\begin{align} \label{E:DIFFCHRISTODOULOUWAVEEQN}
	\square_{g(\vec{\Psi})} \Psi_{\nu} & = 0.
\end{align}
The nonlinearities in \eqref{E:DIFFCHRISTODOULOUWAVEEQN} are
hidden in the covariant wave operator 
$\square_{g(\vec{\Psi})}$ on the LHS.
In \cite{jS2014b}, Speck showed that the vanishing of RHS~\eqref{E:DIFFCHRISTODOULOUWAVEEQN}
is tied to the Euler-Lagrange structure of the original irrotational Euler wave equation.
Moreover, he showed that \emph{all} equations of the form 
\eqref{E:CHRISTODOULOUWAVEEQN} (not necessarily of Euler-Lagrange type), upon differentiation,
yield a system of the form
\begin{align} \label{E:DIFFSPECKWAVEEQN}
	\square_{g(\vec{\Psi})} \Psi_{\nu} 
	& = \mathscr{Q}_{\nu}(\vec{\Psi},\Psi_{\nu}),
\end{align}
where $\mathscr{Q}_{\nu}(\cdot,\cdot)$
\emph{verifies the strong null condition}
of Def.~\ref{D:STRONGNULLCONDITION}.

It is important for the proof of shock formation
that $\square_{g(\vec{\Psi})}$
is the covariant wave operator of
$g(\vec{\Psi})$ 
(see Def.~\ref{D:COVWAVEOP});
the operator $\square_{g(\vec{\Psi})}$ enjoys particularly good commutation properties
with appropriately constructed vectorfields, which we describe 
in Subsubsect.~\ref{SSS:QUICKSUMMARYOFPROOFOFSHOCKFORMATION}.
As we have mentioned, in various solution regimes, 
the nonlinear terms $\mathscr{Q}_{\nu}$ on RHS~\eqref{E:DIFFSPECKWAVEEQN}
have a negligible effect\footnote{We may caricature the negligible effect by the Riccati-type ODE
$\dot{y} = y^2 + \epsilon y$,
where the $y^2$ term caricatures the shock-producing quadratic
term obtained from expanding the covariant wave operator
relative to Cartesian coordinates and $\epsilon y$ caricatures
the $\mathscr{Q}_{\nu}$. For $\epsilon$ small relative to the data $y(0) > 0$,
the $\epsilon y$ term does not interfere with the Riccati-type blowup.}
on the dynamics; we explain this in more detail at the end of 
Subsubsect.~\ref{SSS:SOMEREMARKSONTHEPROOF}.
For this reason, we ignore the $\mathscr{Q}_{\nu}$ for most of this subsection.
Moreover, as is described in \cite{jS2014b}, the proof of shock formation
for solutions to the system \eqref{E:DIFFSPECKWAVEEQN} is not
much more difficult than the proof in the case of a single scalar wave equation.
For this reason, 
until Subsect.~\ref{SS:PREVIEWONSHOCKS},
we restrict our attention to the scalar covariant wave equation
\begin{align} \label{E:SCALARMODELWAVE}
	\square_{g(\Psi)} \Psi
	& = 0.
\end{align}

In \cite{jS2014b}, Speck proved a
small-data\footnote{The ``smallness'' in \cite{jS2014b}
was stated in terms of a Sobolev norm of the data for 
$\Phi$ in equation \eqref{E:CHRISTODOULOUWAVEEQN},
for the data of $\vec{\Psi}$ in the system \eqref{E:DIFFSPECKWAVEEQN},
and for the data of $\Psi$ in equation \eqref{E:SCALARMODELWAVE}.
In contrast, in his study of equation \eqref{E:CHRISTODOULOUWAVEEQN} in \cite{dC2007}, 
Christodoulou assumed that the data of $\Phi - k t$
was small, where $k$ is a non-zero constant and $k t$ is a global background 
solution corresponding to a global non-vacuum fluid state. 
The fact that Christodoulou studied perturbations of the solution $kt$
rather than the solution $0$ is a minor detail that has no important bearing on the analysis;
see \cite{gHsKjSwW2016} for more details.
\label{FN:SMALLNESS}
} 
shock formation result, 
in the spirit of Christodoulou's work
\cite{dC2007}, 
for all equations of type
\eqref{E:CHRISTODOULOUWAVEEQN},
\eqref{E:DIFFSPECKWAVEEQN},
and
\eqref{E:SCALARMODELWAVE}
in three spatial dimensions
whenever the nonlinear terms \emph{fail} to satisfy Klainerman's null condition \cite{sK1984}.
We recall that, as we described in Subsect.~\ref{SS:PRELIMINARYDISCUSSION},
a similar but less precise result had been proved for equations of type \eqref{E:CHRISTODOULOUWAVEEQN}
by Alinhac.

\subsubsection{Preliminary remarks on solution regimes}\label{SSS:DATA}
It is by now well-understood that the geometric framework introduced in \cite{dC2007}
can be applied to show stable shock formation for 
large classes of scalar quasilinear wave equations in different solution regimes. 
Specifically, the works \cites{dC2007,dCsM2014,jS2014b} prove shock formation results for various scalar quasilinear wave equations 
in $1+3$ dimensions
for small compactly supported data, 
\cite{jSgHjLwW2016} treats the regime of nearly simple outgoing 
plane symmetric solutions, 
and \cite{sMpY2014} treats a special ``short pulse'' regime, 
which is a large-data regime. Although the fine details behind propagating the estimates are distinct in each case,
all of these works rely on a similar geometric framework that is able to accommodate and sharply
describe the formation of the shock. In our discussion here, we will focus on the solution regime of \cite{jSgHjLwW2016},
since we will study a similar regime in our forthcoming works on shock formation in the presence of non-zero vorticity.
We chose this solution regime in part because there is no dispersion,  
which simplifies some parts of the proof. That is, one does not need to keep track of decay in time or space, 
which simplifies some aspects of the analysis. However, we expect that
our forthcoming work could be generalized to other solution regimes as long 
as one makes appropriate smallness assumptions.

In \cite{jSgHjLwW2016}, the authors proved a two-space-dimensional
shock formation result for initial data posed on the Cauchy hypersurface
$\mathbb{R} \times \mathbb{T}$ (where $\mathbb{T}$ is the torus), 
which were assumed to be close to that of plane symmetric simple\footnote{Roughly, a simple wave $\Psi$
in one spatial dimension is such that $\Psi(u,v)$
is \emph{independent of $u$}, where $(u,v)$ form a coordinate system of eikonal functions
(that is, the level sets of $u$ and $v$ are null hypersurfaces).
Put differently, $u$ and $v$ are coordinate functions that
solve the eikonal equation \eqref{E:EIKONAL}. \label{FN:SIMPLEPLANE}} 
outgoing\footnote{Roughly, outgoing means right-moving. This choice was made for convenience; 
the left-moving case can be treated with the same arguments.} 
waves,
where the $\mathbb{T}$ direction corresponds to a breaking of the plane symmetry.
By plane symmetric, we mean $\Psi = \Psi(t,x^1)$, while by 
nearly plane symmetric, we mean $\Psi = \Psi(t,x^1,x^2)$ with small initial 
dependence on $x^2 \in \mathbb{T}$.
To propagate smallness in the problem, in particular the smallness of the perturbation away 
from simple plane symmetry, the authors of \cite{jSgHjLwW2016} introduced the data-size parameters
$\mathring{\upepsilon}$ and $\mathring{\updelta}$,
where $\mathring{\upepsilon}$ 
is small relative to $\mathring\updelta^{-1}$.

The geometric meanings of
$\mathring{\upepsilon}$
and
$\mathring{\updelta}$
are easy to describe:
$\mathring{\upepsilon}$ measures the size (in appropriate norms)
of $\Psi$ itself and its derivatives in directions
\emph{tangent} to the characteristics,\footnote{In \cite{jSgHjLwW2016}, the characteristics
were a family of null hyperplanes adapted to the approximate plane symmetry of the problem.
They are analogous to the acoustic characteristics that we encounter in our study of
the compressible Euler equations with vorticity.}
while $\mathring{\updelta}$ measures the size of the 
\emph{purely transversal} (that is, transversal to the characteristics) 
derivatives of $\Psi$. It was also assumed that the
mixed transversal-tangent derivatives are of small size $\mathring{\upepsilon}$. 
In the following, we will consider this solution regime, including the $\mathring{\updelta}$-$\mathring{\epsilon}$ size hierarchy, but \emph{adapted to three spatial dimensions}\footnote{As was discussed in \cite{jSgHjLwW2016}, 
the results of \cite{jSgHjLwW2016} can be generalized to the case of three spatial dimensions
using established techniques.} with the spatial manifold equal to $\mathbb R\times \mathbb T^2$. 
As we will discuss in Subsubsect.~\ref{SSS:FEATURESINCOMMONWITHIRROTATIONALCASE}, 
we will consider a similar solution regime in our forthcoming work 
on shock formation in the presence of vorticity. 



\subsubsection{Some preliminary remarks on the proof, including the significance of the 
strong null condition}
\label{SSS:SOMEREMARKSONTHEPROOF}
For the nearly simple outgoing plane symmetric solutions
described in Subsubsect.~\ref{SSS:PRELIMINARYDICUSSION}, 
the shock-producing homogeneous quasilinear wave equations of type \eqref{E:SCALARMODELWAVE}
can be caricatured\footnote{Note that in one spatial dimension, the linear wave equation
can be written as
$\Lunit_{(Flat)} \partial_1 \Psi = \Lunit_{(Flat)} \partial_t \Psi$ 
and thus $\Lunit_{(Flat)} \partial_1 \Psi = \f 12 \Lunit_{(Flat)} \Lunit_{(Flat)} \Psi$.
Hence, in \eqref{E:EQUATIONCARICATURE}, we have included
$\Lunit_{(Flat)} \Lunit_{(Flat)} \Psi$, which vanishes for simple outgoing waves, in the term $\mbox{\upshape Error}$.
} 
by the following equation 
on $\mathbb{R} \times \mathbb{R} \times \mathbb{T}^2$
(where $\Sigma_t \simeq \mathbb{R} \times \mathbb{T}^2$),
when the derivatives are decomposed with respect to the Cartesian frame $\{\Lunit_{(Flat)} : = \partial_t + \partial_1,\, \rd_1, \, \rd_2,\, \rd_3\}$:
\begin{align} \label{E:EQUATIONCARICATURE}
	\Lunit_{(Flat)} \partial_1 \Psi 
	& = (\partial_1 \Psi)^2 
	+ \mbox{\upshape Error}.
\end{align}
In \eqref{E:EQUATIONCARICATURE}, 
$\mbox{\upshape Error}$ 
consists of quasilinear and semilinear terms
depending on the up-to-second (principal!) order derivatives of $\Psi$
and, by assumption, $\mbox{\upshape Error}$ is \emph{initially} small.
Equation \eqref{E:EQUATIONCARICATURE} 
suggests that $\partial_1 \Psi$, if initially large with respect to
$\mbox{\upshape Error}$,
should experience Riccati-type blow up along the integral curves
of $\Lunit_{(Flat)}$.
However, 
the approach of writing the equation in the form \eqref{E:EQUATIONCARICATURE}
does not seem to actually allow one to prove that $\rd_1\Psi$ blows up;
it is difficult to guarantee that $\mbox{\upshape Error}$ is small throughout the evolution. 

To explain why the above scheme should fail,
we must explain some basic facts about the nature of the blowup.
Specifically, 
a fundamental aspect of Christodoulou's framework is 
that the blowup occurs for derivatives of $\Psi$ in directions
transversal to the characteristics, whose intersection is tied to the blowup.
The relevant characteristics in the nearly plane symmetric regime are \emph{perturbations} 
of the level sets of 
$u_{(Flat)} := 1 - x^1 + t$, 
to which $\Lunit_{(Flat)}$ is tangential.
In particular, 
terms such as $\partial_2^2 \Psi$,
which have been relegated to the term
$\mbox{\upshape Error}$
on RHS~\eqref{E:EQUATIONCARICATURE},
generally blow up at the shock since 
$\partial_2$ is generally transversal to the characteristics
(even though it is tangent to the characteristics $\lbrace 1 - x^1 + t = \mbox{\upshape const} \rbrace$
of the global background solution).
It is for this reason that the scheme from the previous paragraph 
seems to be insufficient for proving that blowup occurs.


A key idea behind Christodoulou's approach in \cite{dC2007} is
to ``hide'' the singularity via a dynamic change of coordinates,
adapted to the characteristics.
This can be viewed as a high-dimensional analogue of the well-known
hodograph transformation in one spatial dimension, in which
one introduces a new system of geometric coordinates in which the solution remains regular;
the singularity reveals itself only in the degeneration of the map from geometric
to Cartesian coordinates. This same phenomenon of hiding the singularity can be exhibited in a much simpler
		context via Burgers' equation
		$\partial_t \Psi + \Psi \partial_x \Psi$:
		shock-forming solutions to Burgers' equation remain smooth 
		(that is, $C^{\infty}$ if the data are)
		\emph{relative to Lagrangian coordinates}.\footnote{In Lagrangian coordinates
		$(t,u)$, where $t$ is the Cartesian time coordinate 
		and $u$ is defined to be constant along integral curves of $\partial_t + \Psi \partial_x$, 
		Burgers' equation reads
		$
		\displaystyle
		\frac{\partial}{\partial t} \Psi = 0
		$, 
		where
		$
		\displaystyle
		\frac{\partial}{\partial t}
		$
		denotes partial differentiation with respect to $t$ at fixed $u$.
		We note that
		$
		\displaystyle
		\frac{\partial}{\partial t} 
		= \partial_t + \Psi \partial_x
		$,
		where $\partial_t$ and $\partial_x$
		are the usual Cartesian coordinate partial derivative vectorfields.
		but are such that the Cartesian coordinate partial derivative 
		$\partial_x \Psi$ blows up. \label{FN:BURGER}}

More precisely, in three spatial dimensions, 
one constructs a new system of \emph{geometric coordinates}
$(t,u,\vartheta^1,\vt^2)$ 
(with corresponding coordinate partial derivative vectorfields\footnote{Let us note that 
$
\displaystyle
\f{\rd}{\rd t}
$ here is defined with respect to the $(t,u,\vartheta^1,\vt^2)$ coordinate system, and it is not equal to
the Cartesian vectorfield $\rd_t$. 
$
\displaystyle
\f{\rd}{\rd t}
$
can viewed as a dynamically constructed analog of $\Lunit_{(Flat)}$, which takes into account the quasilinear geometry.}
$
\displaystyle
\left\lbrace
\frac{\partial}{\partial t},
\frac{\partial}{\partial u},
\frac{\partial}{\partial \vartheta^1},
\frac{\partial}{\partial \vartheta^2}
\right\rbrace
$)
on the spacetime $\mathbb{R} \times \mathbb{R} \times \mathbb{T}^2$,
relative to which the solution remains 
regular, all the way up to the shock, except 
at the very high derivative levels, 
where the corresponding energies are allowed to blow up in a controlled fashion.
These degenerate energy estimates are the source of 
almost all of the technical difficulties that one faces;
see Subsubsect.~\ref{SSS:QUICKSUMMARYOFPROOFOFSHOCKFORMATION}.

One might say that relative to the geometric coordinates,
the singularity is \emph{renormalizable} except at the very high
derivatives levels. 
The singularity occurs in the Cartesian coordinate partial derivatives $\partial_{\alpha} \Psi$
because the geometric coordinates degenerate
(in a precise fashion that lies at the heart of the proof)
relative to the Cartesian ones.
The most important geometric coordinate is the eikonal function $u$,
which we later describe in great detail. The eikonal function is constructed so that
its level sets are null hypersurfaces (which we also refer to as 
``characteristics'' or, in the context of the compressible Euler equations,
as ``acoustic characteristics''
in view of their connection to sound wave propagation)
and thus
$
\displaystyle
\left\lbrace
\frac{\partial}{\partial t},
\frac{\partial}{\partial \vartheta^1},
\frac{\partial}{\partial \vartheta^2}
\right\rbrace
$
are tangent to the acoustic characteristics.
 
It turns out that upon being re-expressed relative to the geometric coordinates, 
equation \eqref{E:EQUATIONCARICATURE} can be caricatured as
\begin{align} \label{E:GEOMETRICCOORDINATEEQUATIONCARICATURE}
	\frac{\partial}{\partial t}
	\frac{\partial}{\partial u}
	\Psi 
	& 
		= 
		\mbox{\upshape Error},
\end{align}
where all terms in \eqref{E:GEOMETRICCOORDINATEEQUATIONCARICATURE}
\emph{remain highly differentiable relative to the geometric coordinates, all the way up to the shock}.\footnote{We note, 
however, that $\mbox{\upshape Error}$
contains, among other terms,
terms involving
$
\displaystyle
\frac{\partial^2 \Psi}{\partial (\vartheta^1)^2}  
$ 
and
$
\displaystyle
\frac{\partial^2 \Psi}{\partial (\vartheta^2)^2} 
$.
In other words, while 
these terms can be viewed as error terms
for heuristic considerations near the shock, they are 
``main terms'' from the point of view of the top-order energy estimates.}

As we mentioned above, 
the singularity in the first Cartesian coordinate partial derivatives of $\Psi$
is tied to the degeneration of the change of variables map between the geometric coordinates
and the Cartesian ones.
To capture the degeneration, 
one defines a geometric weight $\upmu$ 
such that $\upmu \to 0$ corresponds to the intersection of the
acoustic characteristics, the formation of a shock, 
and the blow up of the solution's Cartesian coordinate partial derivatives. 
Thus, proving shock formation is equivalent to
showing that $\upmu$ vanishes in finite time.
It turns out that $\upmu$ 
verifies an evolution equation that can be
caricatured, relative to geometric coordinates, as follows:\footnote{Throughout we use the notation
$A \sim B$ to imprecisely indicate that $A$ is well-approximated by $B$.}
\begin{align} \label{E:UPMUEVOLUTIONCARICATURE}
	\frac{\partial}{\partial t}
	\upmu
	& \sim \frac{\partial}{\partial u} \Psi
	+ 
	\mbox{\upshape Error},
\end{align}
where $\mbox{\upshape Error}$ remains small, all the way up to the shock.
The interplay between equations 
\eqref{E:GEOMETRICCOORDINATEEQUATIONCARICATURE}
and \eqref{E:UPMUEVOLUTIONCARICATURE} is the key to understanding the shock formation.

We now describe the relationship between the geometric and Cartesian coordinate
partial derivative vectorfields.
If $u$ is appropriately constructed, then, under appropriate assumptions
on the data, one can show that for nearly plane symmetric solutions, we have
\begin{align} \label{E:PARTIALUPARTIAL1CARICATURE}
	\frac{\partial}{\partial u}
	& = 
	- 
	\upmu \partial_1 + \upmu \mbox{\upshape Error},
\end{align}
where 
in \eqref{E:GEOMETRICCOORDINATEEQUATIONCARICATURE} and \eqref{E:PARTIALUPARTIAL1CARICATURE},
$\upmu \mbox{\upshape Error}$ remains small up to the shock;
see Figure~\ref{F:FRAME} below to obtain insight on the vectorfield
$
\displaystyle
\frac{\partial}{\partial u}
$,
which is well-approximated by the vectorfield $\Rad$ appearing in that figure.
Hence, to prove finite-time shock formation,
one considers data such that 
$
\displaystyle
\frac{\partial}{\partial u} \Psi
$
is sufficiently negative at some point. 
By integrating \eqref{E:GEOMETRICCOORDINATEEQUATIONCARICATURE} in time,
one can propagate this negativity for a long time,
as long as $\mbox{\upshape Error}$ remains small.
Then by integrating \eqref{E:UPMUEVOLUTIONCARICATURE} in time,
we see that $\upmu$
will vanish in finite time and, by
dividing
\eqref{E:PARTIALUPARTIAL1CARICATURE}
by $\upmu$,
that some Cartesian coordinate partial derivative of $\Psi$ will blow up
(in particular because
$
\displaystyle
\frac{\partial}{\partial u} \Psi
$
is strictly non-zero at the points where $\upmu$ vanishes).
To make this argument precise in more than one spatial
dimension, one of course needs to derive energy estimates. 
As we mentioned above, this is the difficult part of the proof;
see Subsubsect.~\ref{SSS:QUICKSUMMARYOFPROOFOFSHOCKFORMATION}.

Based on the above discussion, it is easy to explain the 
significance of the strong null condition of Def.~\ref{D:STRONGNULLCONDITION}
in the case where the nonlinear terms on the right hand side of \eqref{E:SCALARMODELWAVE}
are precisely quadratic in the solution's derivatives.
We first note that one can construct a null\footnote{Actually, strictly speaking, \eqref{E:SHOCKSNULLFRAME}
is not a null frame in the sense of Def.~\ref{D:NULLFRAME}
because the non-zero normalization conditions
for the frame \eqref{E:SHOCKSNULLFRAME} are different; this is a 
minor issue that we ignore here.
\label{E:SORTOFANULLFRAME}}
frame\footnote{In fact, while it is most convenient to explain the necessity of a null condition using a null frame, 
in deriving estimates in \cites{jLjS2016b,jLjS2017}, we use a slightly different frame which still captures the ``good'' and ``bad'' directions, but is more convenient from the point of view of commutations; see the discussion preceding Def.~\ref{frame.def.1}.}
that is expressible relative to the geometric coordinates as follows:
\begin{align} \label{E:SHOCKSNULLFRAME}
	\left\lbrace
		\Lunit 
		= \frac{\partial}{\partial t},\,
		\uLgood 
		= \upmu \frac{\partial}{\partial t}
			+ 2 \frac{\partial}{\partial u}
			- 2 \upmu \Xi,\,
			e_1,\, e_2
	\right\rbrace.
\end{align}
Above, $\Xi$ is a vectorfield in the span of
$
\displaystyle
\left\lbrace \frac{\partial}{\partial \vartheta^1}, \f{\rd}{\rd\vt^2} \right\rbrace$,
$\uLgood$ is $g$-null and $g$--orthogonal to the constant-$(t,u)$ tori,
and $\lbrace e_1, e_2 \rbrace$ is an arbitrary $g$--orthonormal frame
in the span\footnote{As a consequence, $e_1$ and $e_2$ are indeed orthogonal to $\Lunit$ and $\uLgood$.} of
$
\displaystyle
\left\lbrace \frac{\partial}{\partial \vartheta^1}, \f{\rd}{\rd\vt^2} \right\rbrace
$.
To proceed, we note that by \eqref{E:PARTIALUPARTIAL1CARICATURE},
a typical quadratic semilinear term 
$\sum_{\alpha,\beta= 0}^3 C_{\alpha \beta} (\partial_{\alpha} \Psi) \partial_{\beta} \Psi$,
with $C_{\alpha \beta}$ constants and $\partial_{\alpha}$ the Cartesian coordinate partial derivative vectorfields,
if present on RHS~\eqref{E:EQUATIONCARICATURE}, would yield 
(on RHS~\eqref{E:GEOMETRICCOORDINATEEQUATIONCARICATURE},
after multiplying \eqref{E:EQUATIONCARICATURE} by the factor $\upmu$
as described above),
relative to the geometric coordinates,
a term proportional to
$
\displaystyle
\frac{1}{\upmu}
\left(\frac{\partial}{\partial u} \Psi \right)^2
$.
The main problem with such a term is that the factor 
of $1/ \upmu$ prevents one from proving that 
$
\displaystyle
\frac{\partial}{\partial u} \Psi
$
remains bounded all the way up to the singularity
(which is caused by the vanishing of $\upmu$)
and therefore obstructs the basic philosophy of the approach:
showing that $\Psi$ remains regular relative to the geometric coordinates.
Similarly, generic cubic terms in $\partial \Psi$
would yield an even worse term
$
\displaystyle
\frac{1}{\upmu^2}
\left(\frac{\partial}{\partial u} \Psi \right)^3
$.
However, terms verifying the strong null condition do not 
suffer from these problems; 
it is easy to see from \eqref{E:SHOCKSNULLFRAME} that
quadratic terms verifying the
strong null condition can yield, for example, terms 
on RHS~\eqref{E:UPMUEVOLUTIONCARICATURE} of the form
$
\displaystyle
\frac{\partial}{\partial t} \Psi 
\cdot
\frac{\partial}{\partial u} \Psi
$,
$
\displaystyle
\f{\rd}{\rd\vt^1} \Psi 
\cdot
\frac{\partial}{\partial u} \Psi
$,
$
\displaystyle
\upmu
\left(\f{\rd}{\rd\vt^1} \Psi \right)^2
$,
or
$
\displaystyle
\upmu
\left(\f{\rd}{\rd\vt^2} \Psi \right)^2
$,
which do not incur the dangerous factor $1/\upmu$
and which involve at least one differentiation with respect to an element of
$
\displaystyle
\left\lbrace
{\f{\rd}{\rd t}},{\f{\rd}{\rd\vt^1}},{\f{\rd}{\rd\vt^2}}
\right\rbrace,
$
which are tangent to the characteristics.

Moreover, in the context of nearly simple outgoing plane symmetric waves 
(which we described in Subsubsect.~\ref{SSS:DATA}), 
not only are the quadratic terms verifying the strong null condition
regular near the shock, they 
are also \emph{small} all the way up to the shock. This is 
because each term in their decomposition
relative to the geometric coordinates
contains a ``tangential'' factor that is differentiated with respect to an element of
$
\displaystyle
\left\lbrace
{\f{\rd}{\rd t}},{\f{\rd}{\rd\vt^1}},{\f{\rd}{\rd\vt^2}}
\right\rbrace
$;
tangential factors enjoy the $\mathcal{O}(\mathring{\upepsilon})$ smallness
described in Subsubsect.~\ref{SSS:DATA}.

\subsubsection{Normalization choices and assumptions on the nonlinearities to ensure shock formation}
\label{SSS:ASSUMPTIONSONNONLINEARITIES}
We recall that we are studying shock formation in nearly simple outgoing
plane symmetric solutions to the wave equation \eqref{E:SCALARMODELWAVE}.
After appropriate normalization choices and rescaling, we may assume\footnote{Actually, 
in putting the metric into this form, we introduce 
a semilinear term proportional to $(g^{-1})^{\alpha \beta}(\Psi) \partial_{\alpha} \Psi \partial_{\beta} \Psi$
in the covariant wave equation corresponding to the rescaled metric.
However, this term verifies the strong null condition of Def.~\ref{D:STRONGNULLCONDITION}
and therefore has a negligible impact on the dynamics.
We therefore ignore it in the exposition.
\label{FN:GINVERSE00ISMINUSONE}}  
that the Cartesian components of the metric from \eqref{E:SCALARMODELWAVE}
verify
\begin{align} \label{E:LITTLEGDECOMPOSED}
	g_{\mu \nu} 
	= g_{\mu \nu}(\Psi)
	& := m_{\mu \nu} 
		+ g_{\mu \nu}^{(Small)}(\Psi),
	&& (\mu, \nu = 0,1,2,3),
\end{align}
where $m_{\mu \nu} = \mbox{diag}(-1,1,1,1)$
is the standard Minkowski metric on $\mathbb{R} \times \mathbb{R} \times \mathbb{T}^2$
and $g_{\mu \nu}^{(Small)}(\Psi)$ are given smooth functions of $\Psi$ with
\begin{align} \label{E:METRICPERTURBATIONFUNCTION}
	g_{\mu \nu}^{(Small)}(\Psi = 0)
	= 0, \qquad (g^{-1})^{00}(\Psi) \equiv -1.
\end{align}
To ensure that shocks form, we assume that
\begin{align} \label{E:NONVANISHINGNONLINEARCOEFFICIENT}
	G_{\alpha \beta}(\Psi = 0) \Lunit_{(Flat)}^{\alpha} \Lunit_{(Flat)}^{\beta} \neq 0,
\end{align}
where
\begin{align} \label{E:LFLAT}
	G_{\alpha \beta}
	= G_{\alpha \beta}(\Psi)
	& := \frac{d}{d \Psi} g_{\alpha \beta}(\Psi),\qquad \Lunit_{(Flat)} := \partial_t + \partial_1.
\end{align}
The assumptions \eqref{E:NONVANISHINGNONLINEARCOEFFICIENT} and \eqref{E:LFLAT}
are equivalent to the assumption that Klainerman's null condition fails
for equation \eqref{E:SCALARMODELWAVE} for solutions depending only on $(t,x^1)$.
Roughly, this implies that relative to Cartesian coordinates, 
there are quadratic Riccati-type semilinear terms present in the wave equation,
as we caricatured with the model term ``$(\partial_1 \Psi)^2$''
in Subsubsect.~\ref{SSS:SOMEREMARKSONTHEPROOF}.

\subsubsection{The eikonal function and related geometric constructions}
\label{SSS:GEOMETRICCONSTRUCTIONS}
As we explained above, the main idea behind the proof of shock formation 
under Christodoulou's framework \cite{dC2007} is that one
can construct a new system of \emph{geometric coordinates}
$(t,u,\vartheta^1,\vt^2)$ 
(see Def.~\ref{D:GEOMETRICCOORDS})
relative to which the solution remains 
regular, all the way up to the shock, except at the very high derivative levels.
As in the rest of the article, $t$ is the standard Cartesian time function.
The most important geometric coordinate is the eikonal function $u$, 
which solves the eikonal equation,
a nonlinear hyperbolic PDE coupled to the wave equation:
\begin{align} \label{E:EIKONAL}
	(g^{-1})^{\alpha \beta}(\Psi)
	\partial_{\alpha} u \partial_{\beta} u
	& = 0, 
	\qquad \partial_t u > 0,
\end{align}
where $g = g(\Psi)$ is the Lorentzian metric appearing in 
\eqref{E:SCALARMODELWAVE}.
We supplement \eqref{E:EIKONAL} with the initial conditions 
\begin{align} \label{E:ICEIKONAL}
	u|_{\Sigma_0} = 1 - x^1.
\end{align}
The choice \eqref{E:ICEIKONAL} is
motivated by the (assumed) approximate plane symmetry of the initial data for the wave equation.

The following regions of spacetime are determined by $t$ and $u$
and play an important role in the analysis. They are depicted in
Figure~\ref{F:SOLIDREGION}.

\begin{definition} [\textbf{Subsets of spacetime}]
\label{D:HYPERSURFACESANDCONICALREGIONS}
We define the following spacetime subsets:
\begin{subequations}
\begin{align}
	\Sigma_{t'} & := \lbrace (t,x^1,x^2,x^3) \in \mathbb{R} \times \mathbb{R} \times \mathbb{T}^2
		\ | \ t = t' \rbrace, 
		\label{E:SIGMAT} \\
	\Sigma_{t'}^{u'} & := \lbrace (t,x^1,x^2,x^3) \in \mathbb{R} \times \mathbb{R} \times \mathbb{T}^2
		 \ | \ t = t', \ 0 \leq u(t,x^1,x^2,x^3) \leq u' \rbrace, 
		\label{E:SIGMATU} 
		\\
	\mathcal{P}_{u'} 
	& := 
		\lbrace (t,x^1,x^2,x^3) \in \mathbb{R} \times \mathbb{R} \times \mathbb{T}^2
			\ | \ u(t,x^1,x^2,x^3) = u' 
		\rbrace, 
		\label{E:PU} \\
	\mathcal{P}_{u'}^{t'} 
	& := 
		\lbrace (t,x^1,x^2,x^3) \in \mathbb{R} \times \mathbb{R} \times \mathbb{T}^2
			\ | \ 0 \leq t \leq t', \ u(t,x^1,x^2,x^3) = u' 
		\rbrace, 
		\label{E:PUT} \\
	\ell_{t',u'} 
		&:= \mathcal{P}_{u'}^{t'} \cap \Sigma_{t'}^{u'},\\
	\mathcal{M}_{t',u'} & := \cup_{u \in [0,u']} \mathcal{P}_u^{t'}.
		\label{E:MTUDEF}
\end{align}
\end{subequations}
\end{definition}
The most important of these subsets are the $\mathcal{P}_u$, which 
we describe below in more detail.

\begin{center}
\begin{overpic}[scale=.2]{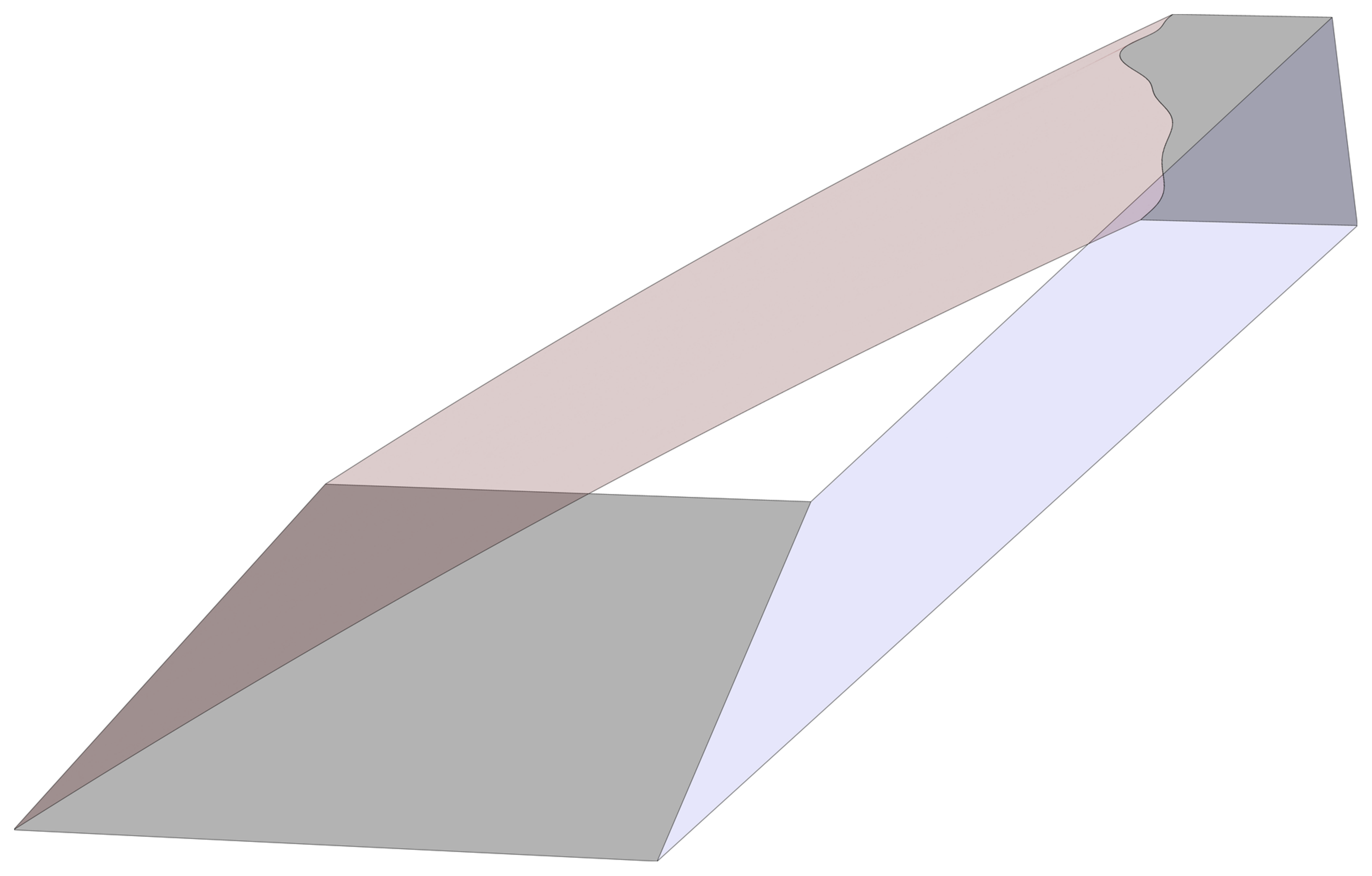} 
\put (54,31) {\large$\displaystyle \mathcal{M}_{t,u}$}
\put (42,33) {\large$\displaystyle \mathcal{P}_u^t$}
\put (74,33) {\large$\displaystyle \mathcal{P}_0^t$}
\put (75,12) {\large$\displaystyle \Psi \equiv 0$}
\put (32,17) {\large$\displaystyle \Sigma_0^u$}
\put (47,13) {\large$\displaystyle \ell_{0,0}$}
\put (12,13) {\large$\displaystyle \ell_{0,u}$}
\put (88.5,59) {\large$\displaystyle \Sigma_t^u$}
\put (93,53) {\large$\displaystyle \ell_{t,0}$}
\put (85.5,53) {\large$\displaystyle \ell_{t,u}$}
\put (-.6,16) {\large$\displaystyle x^2 \in \mathbb{T}$}
\put (22,-3) {\large$\displaystyle x^1 \in \mathbb{R}$}
\thicklines
\put (-.9,3){\vector(.9,1){22}}
\put (.7,1.8){\vector(100,-4.5){48}}
\end{overpic}
\captionof{figure}{The spacetime region and various subsets, with one spatial dimension suppressed.}
\label{F:SOLIDREGION}
\end{center}

We now explain how to use $u$ to construct a 
good vectorfield frame, given in equation \eqref{E:RESCALEDFRAME},
that is useful for studying the solution.
We note that the frame \eqref{E:RESCALEDFRAME}
is closely related to a $g$-null frame
(see Def.~\ref{D:NULLFRAME}); however, it is not literally a $g$-null frame
(and the difference is not important for the main ideas of the discussion).
To proceed, we associate the following gradient vectorfield
to the eikonal function:
\begin{align} \label{E:LGEOEQUATION}
	\Lgeo^{\nu} & := - (g^{-1})^{\nu \alpha} \partial_{\alpha} u.
\end{align}
From \eqref{E:EIKONAL}, we deduce that $\Lgeo$ is future-directed
and $g$-null:
\begin{align}  \label{E:LGEOISNULL}
g(\Lgeo,\Lgeo) 
:= g_{\alpha \beta} \Lgeo^{\alpha} \Lgeo^{\beta}
= 0.
\end{align}
Moreover, we can differentiate the eikonal equation with
$\D^{\nu} := (g^{-1})^{\nu \alpha} \D_{\alpha}$,
where $\D$ is the Levi-Civita connection of $g$,
and use the torsion-free property of $\D$ to deduce that
$
0
= 
(g^{-1})^{\alpha \beta} \D_{\alpha} u \D_{\beta} \D^{\nu} u
= - \D^{\alpha} u \D_{\alpha} \Lgeo^{\nu}
= \Lgeo^{\alpha} \D_{\alpha} \Lgeo^{\nu}
$. That is, $\Lgeo$ is geodesic:
\begin{align} \label{E:LGEOISGEODESIC}
	\D_{\Lgeo} \Lgeo & = 0.
\end{align}
In addition, since $\Lgeo$ is proportional to the $g-$dual of the one-form $d u$,
which is co-normal to the level sets $\mathcal{P}_u$ of the eikonal function,
it follows that $\Lgeo$ is $g-$orthogonal to $\mathcal{P}_u$.
Hence, the $\mathcal{P}_u$ have null normals. 
For this reason, such hypersurfaces
are known as \emph{null hypersurfaces}.
We sometimes refer to them as
``characteristics''
or, in the context of the compressible Euler equations,
as ``acoustic characteristics.''

As we mentioned earlier, the most important
quantity in connection with shock formation
is the inverse foliation density.

\begin{definition}[\textbf{Inverse foliation density}]
 \label{D:UPMUDEF}
Let $\Lgeo^0$ be the $0$ Cartesian component of the vectorfield $\Lgeo$ defined in 
\eqref{E:LGEOEQUATION}.
We define the inverse foliation density $\upmu$ as follows:
\begin{align} \label{E:UPMUDEF}
	\upmu 
	& := 
		\frac{-1}{(g^{-1})^{\alpha \beta} \partial_{\alpha} t \partial_{\beta} u} 
	= \frac{-1}{(g^{-1})^{0 \alpha} \partial_{\alpha} u} 
	= \frac{1}{\Lgeo^0}.
\end{align}
\end{definition}

$1/\upmu$ is a measure of the density of 
the characteristics $\mathcal{P}_u$
relative to the constant-time hypersurfaces $\Sigma_t$. When $\upmu$
becomes $0$, the density becomes infinite and the level sets of $u$
intersect. The idea to study this quantity in the context of shock formation
goes back at least to \cite{fJ1974}, in which John proved a blowup result 
for solutions to a large class of hyperbolic systems in one spatial dimension.

It is easy to show that under the assumptions of
Subsubsect.~\ref{SSS:ASSUMPTIONSONNONLINEARITIES}
and \eqref{E:ICEIKONAL},
we have
\begin{align} \label{E:UPMUINITIAL}
	\upmu|_{\Sigma_0} 
	& = 1 + \mathcal{O}(\Psi).
\end{align}
In particular, when $|\Psi|$ is initially small,
$\upmu$ is initially near unity.

It turns out that the Cartesian components $\Lgeo^{\nu}$ 
blowup when $\upmu$ vanishes (that is, when the shock forms).
It also turns out that the products 
$\upmu \Lgeo^{\nu}$
remain regular all the way up to the shock.
For this reason, the vectorfield $\Lunit := \upmu \Lgeo$ is useful for studying the solution.

\begin{definition}[\textbf{Rescaled null vectorfield}]
	\label{D:LUNITDEF}
	We define the rescaled null (see \eqref{E:LGEOISNULL}) vectorfield $\Lunit$ as follows:
	\begin{align} \label{E:LUNITDEF}
		\Lunit
		& := \upmu \Lgeo.
	\end{align}
\end{definition}
Note that $\Lunit t = 1$.

We now dynamically construct a geometric torus coordinates $\vartheta^1$ and $\vartheta^2$ by 
setting $\vartheta^1|_{\Sigma_0} = x^2$, $\vt^2|_{\Sigma_0} = x^3$ (with $x^2$ and $x^3$ 
being the standard Cartesian coordinates on $\mathbb{T}^2$)
and propagating the $\vartheta^A$ to the future via the transport equation
\begin{align} \label{E:TRANSPORTEQUATIONFORTHETA}
	\Lunit \vartheta^1 
	& = 
	\Lunit \vartheta^2 
	= 0.
\end{align}

\begin{definition}[\textbf{Geometric coordinates}]
		\label{D:GEOMETRICCOORDS}
		We refer to $(t,u,\vartheta^1,\vt^2)$ as the geometric coordinates.
		We denote the corresponding geometric partial derivative vectorfields by
		\begin{align} \label{E:GEOCORDSPARTIALDERIVVECTORFIELDS}
			\left\lbrace
				\frac{\partial}{\partial t},
				\frac{\partial}{\partial u},
				\frac{\partial}{\partial \vartheta^1},
				\f{\rd}{\rd\vt^2}
			\right\rbrace.
		\end{align}
\end{definition}

In addition to using the geometric coordinates, we also follow \cite{dC2007}
and introduce a set of geometric vectorfields adapted to them (and to the characteristics).
The necessity of using the geometric vectorfields is tied to a ``regularity issue''
that we suppressed in Subsubsect.~\ref{SSS:SOMEREMARKSONTHEPROOF} 
for the simplicity of the exposition:
directly commuting the coordinate vectorfields 
$
\displaystyle
\frac{\partial}{\partial u},
$
$
\displaystyle
\frac{\partial}{\partial \vartheta^1}
$
and
$
\displaystyle
\frac{\partial}{\partial \vartheta^2}
$
through the wave equation \eqref{E:SCALARMODELWAVE} leads 
error terms that lose a derivative, a difficulty that we do know how to overcome at the top order.
In contrast, in commuting with the geometric vectorfields, 
we are able to overcome\footnote{After great effort; see Subsubsect.~\ref{SSS:QUICKSUMMARYOFPROOFOFSHOCKFORMATION}.} 
the potential derivative loss
yet still capture the geometry of the shock singularity.


\begin{definition}[\textbf{The vectorfields} $\Radunit$, $\GeoAng_1$, \textbf{and} $\GeoAng_2$]\label{frame.def.1}
	We define $\Radunit$ to be the unique $\Sigma_t-$tangent vectorfield that
	is $g-$orthogonal to $\ell_{t,u}$ and normalized by
	\begin{align} \label{E:GLUNITRADUNITISMINUSONE}
		g(\Lunit,\Radunit) = -1.
	\end{align}
	Moreover we define the following $\upmu-$weighted version of $\Radunit$
	\begin{align}
		\Rad 
		& := \upmu \Radunit.
		\label{E:RADANDULGOOD}
	\end{align}
	Finally, define $\GeoAng_1$ (respectively $\GeoAng_2$) to be the $g$-orthogonal projection of $\rd_2$ (respectively $\rd_3$) onto $\ell_{t,u}$.


\end{definition}
It is convenient to use the following rescaled frame,\footnote{We refer to it as a rescaled frame 
since $\Rad$ is ``rescaled'' by a factor of $\upmu$.} 
which can be viewed as a replacement of the geometric coordinate partial derivative vectorfields
that does not suffer from the regularity problems mentioned above.
\begin{definition}[\textbf{Rescaled frame}]
\label{D:RESCALEDFRAME}
	We define the rescaled frame to be
	\begin{align} \label{E:RESCALEDFRAME}
	\left\lbrace
		\Lunit,
		\Rad,
		\GeoAng_1, \GeoAng_2
	\right\rbrace.
\end{align}
\end{definition}
The rescaled frame is depicted
in Figure~\ref{F:FRAME}.
It spans the tangent space of spacetime at each point
with $\upmu > 0$. Moreover, by construction, 
$\{L,\GeoAng_1,\GeoAng_2\}$ are \emph{tangential} 
to the characteristics $\mathcal P_u$, while $\Rad$ is \emph{transversal}. 
In addition, we note that the Cartesian components of 
$\Rad$ are proportional to $\upmu$ and thus are small
in regions where $\upmu$ is small. We now compare the rescaled frame 
to the geometric coordinate partial derivative vectorfields in \eqref{E:GEOCORDSPARTIALDERIVVECTORFIELDS}. 
Indeed, one first notes that 
\begin{equation}\label{E:LISDDT}
L=\f{\rd}{\rd t}.
\end{equation}
Next, one computes that $\Rad u=1$ and therefore, since $\Rad$ is tangent to $\Sigma_t$, we have 
$
\displaystyle
\Rad 
= 
\frac{\partial}{\partial u}
- 
\Xi
$, 
where $\Xi$ an $\ell_{t,u}-$tangent vectorfield. 
Finally, the pair $\{\GeoAng_1, \GeoAng_2\}$, like 
$
\displaystyle
\left\lbrace \f{\rd}{\rd\vt^1},\f{\rd}{\rd\vt^2} \right\rbrace
$, 
are tangential to $\ell_{t,u}$ (and in particular to the characteristics $\mathcal{P}_u$).

\begin{center}
\begin{overpic}[scale=.35]{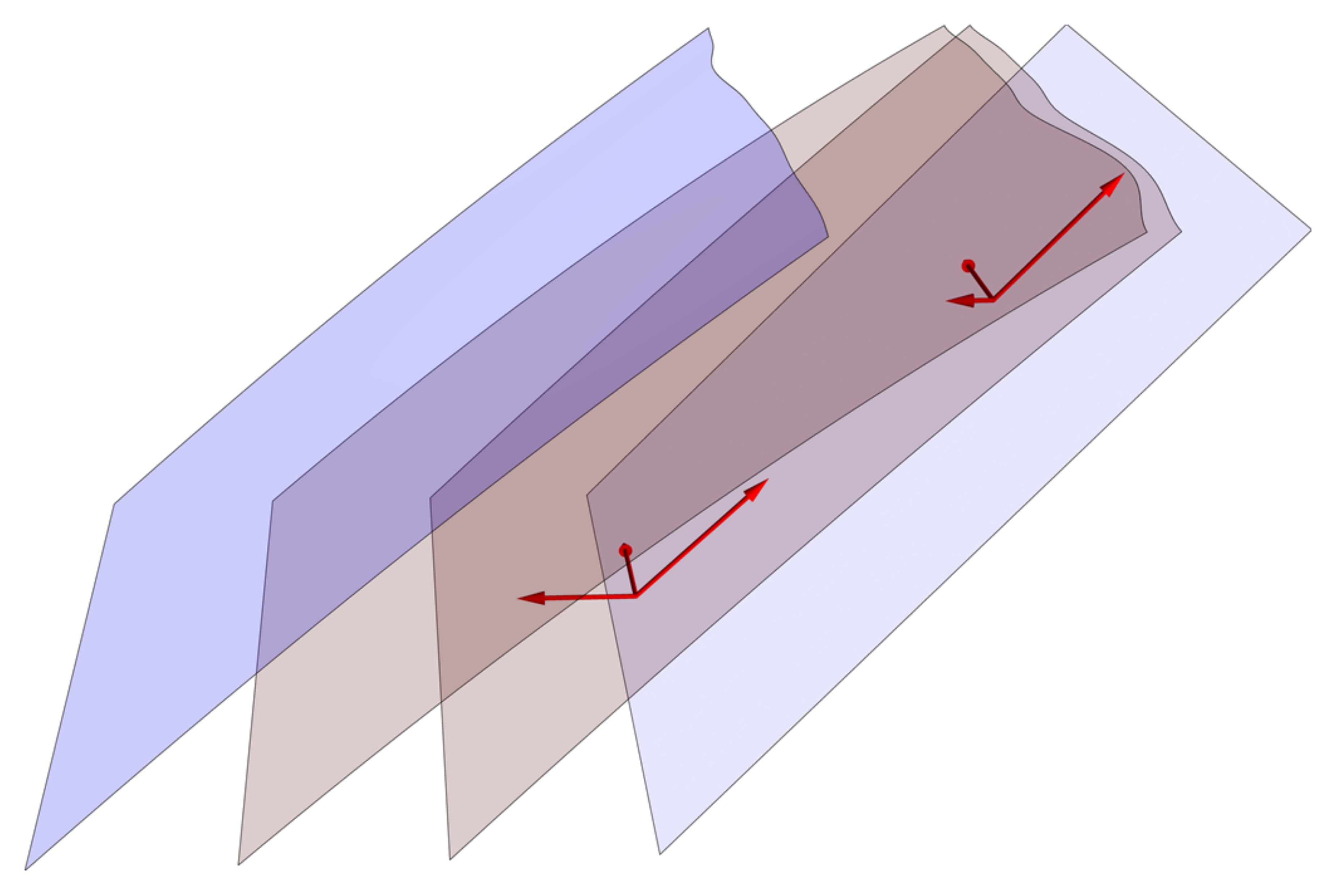} 
\put (84,55) {\large$\displaystyle \Lunit$}
\put (67,43.5) {\large$\displaystyle \Rad$}
\put (70,49) {\large$\displaystyle \GeoAng_1$}
\put (57,32) {\large$\displaystyle \Lunit$}
\put (35,21) {\large$\displaystyle \Rad$}
\put (44,28) {\large$\displaystyle \GeoAng_1$}
\put (51,13) {\large$\displaystyle \mathcal{P}_0^t$}
\put (37,13) {\large$\displaystyle \mathcal{P}_u^t$}
\put (7,13) {\large$\displaystyle \mathcal{P}_1^t$}
\put (21,10) {\large$\displaystyle \upmu \approx 1$}
\put (70,58) {\large$\displaystyle \upmu \ \mbox{\upshape small}$}
\put (75,12) {\large$\displaystyle \Psi \equiv 0$}
%
\end{overpic}
\captionof{figure}{The rescaled frame at two distinct points in 
$\mathcal{P}_u$, with one spatial dimension suppressed.}
\label{F:FRAME}
\end{center}

The reader may have noticed that while the definition 
of the strong null condition 
(see Def.~\ref{D:STRONGNULLCONDITION})
is based on null frames,
our current discussion on the geometry does not explicitly feature a null frame.\footnote{In particular, $\{L,\Rad,\GeoAng_1,\GeoAng_2\}$ is \emph{not} a $g$--null frame.} 
Nevertheless, one can construct a null frame out of $\{L,\Rad,\GeoAng_1,\GeoAng_2\}$ by defining the null vector\footnote{Note also that $\underline{L}$ as defined does \emph{not} remain regular with respect to the geometric coordinates
as the shock forms. That is, $\uLunit$ contains a component proportional to
$
\displaystyle
\frac{1}{\upmu}
\frac{\partial}{\partial u}
$.
In contrast, the rescaled vectorfield $\breve{\underline{L}}:=\upmu \underline{L}$ \emph{does} remain regular.
This also explains the form of the frame \eqref{E:SHOCKSNULLFRAME}
from Subsubsect.~\ref{SSS:SOMEREMARKSONTHEPROOF}.} $\underline{L}:=L+2\upmu^{-1}\bX$ and performing the Gram--Schmidt process on $\{\GeoAng_1,\GeoAng_2\}$ to obtain an orthonormal frame. As it turns out, 
the proof of shock formation can be carried out relative to
the rescaled frame $\left\lbrace L,\Rad,\GeoAng_1,\GeoAng_2 \right\rbrace$, which
is sufficiently adapted to the characteristics
and has all of the properties needed for capturing the good null structure in the equation.

Having introduced the geometric setup, we now provide
a more precise version of the 
evolution equation for $\upmu$ caricatured in
\eqref{E:UPMUEVOLUTIONCARICATURE}.
It plays a key role in the ensuing discussion.
The proof of the lemma is based on
decomposing the ``$0$ component'' of the
geodesic equation \eqref{E:LGEOISGEODESIC};
see \cite{jSgHjLwW2016} for the short proof.

\begin{lemma}[\textbf{Evolution equation for $\upmu$}]
\label{L:SCHEMATICEVOLUTIONEQUATIONINVERSEFOLIATIONDENSITY}
The inverse foliation density from Def.~\ref{D:UPMUDEF}
verifies the following evolution equation,
where the first product on the RHS is exactly depicted and
the second one is schematically depicted:
\begin{align} \label{E:SCHEMATICEVOLUTIONEQUATIONINVERSEFOLIATIONDENSITY}
	\Lunit \upmu
	& 
	= 
	\frac{1}{2} G_{\Lunit \Lunit} \Rad \Psi
	+
	\upmu \mathcal{O}(\Lunit \Psi),
\end{align}
where $G_{\Lunit \Lunit}  := G_{\alpha \beta} \Lunit^{\alpha} \Lunit^{\beta}$
and $G_{\alpha \beta} = G_{\alpha \beta}(\Psi)$ is defined in \eqref{E:LFLAT}.
\end{lemma}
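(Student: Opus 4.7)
\medskip

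My plan is to derive \eqref{E:SCHEMATICEVOLUTIONEQUATIONINVERSEFOLIATIONDENSITY} starting from the definition $\upmu = 1/\Lgeo^0$ in \eqref{E:UPMUDEF} and the geodesic equation \eqref{E:LGEOISGEODESIC}. Differentiating $\upmu = 1/\Lgeo^0$ along $\Lunit$ and using $\Lunit = \upmu \Lgeo$, I obtain
\[
\Lunit \upmu \;=\; -\upmu^{2}\,\Lunit \Lgeo^{0} \;=\; -\upmu^{3}\,\Lgeo \Lgeo^{0}.
\]
The $0$-component of the geodesic equation $\D_{\Lgeo}\Lgeo = 0$, written in Cartesian coordinates, reads $\Lgeo \Lgeo^{0} = -\Gamma^{0}_{\alpha\beta}\Lgeo^{\alpha}\Lgeo^{\beta}$. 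Converting back to $\Lunit$ via $\Lgeo^{\alpha} = \upmu^{-1}\Lunit^{\alpha}$, this gives the clean identity
\[
\Lunit \upmu \;=\; \upmu\,\Gamma^{0}_{\alpha\beta}\,\Lunit^{\alpha}\Lunit^{\beta}.
\]
Everything then reduces to analyzing the Christoffel expression on the right.

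Since $g_{\mu\nu}$ depends only on $\Psi$, we have $\partial_{\alpha} g_{\mu\nu} = G_{\mu\nu}\partial_{\alpha}\Psi$, with $G_{\mu\nu}$ as in \eqref{E:LFLAT}. Substituting into $\Gamma^{0}_{\alpha\beta} = \tfrac{1}{2}(g^{-1})^{0\gamma}(\partial_{\alpha}g_{\beta\gamma} + \partial_{\beta}g_{\alpha\gamma} - \partial_{\gamma}g_{\alpha\beta})$ and contracting with $\Lunit^{\alpha}\Lunit^{\beta}$, two of the three terms produce a factor $\Lunit^{\alpha}\partial_{\alpha}\Psi = \Lunit \Psi$, hence
\[
\Gamma^{0}_{\alpha\beta}\Lunit^{\alpha}\Lunit^{\beta}
\;=\; (g^{-1})^{0\gamma} G_{\beta\gamma}\Lunit^{\beta}\,\Lunit\Psi
\;-\; \tfrac{1}{2} G_{\Lunit\Lunit}\,(g^{-1})^{0\gamma}\partial_{\gamma}\Psi.
\]
The first term is manifestly of the form $\mathcal{O}(\Lunit\Psi)$ and will feed into the error.

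The main work is to decompose $(g^{-1})^{0\gamma}\partial_{\gamma}\Psi$ in the rescaled frame $\{\Lunit,\Rad,\GeoAng_1,\GeoAng_2\}$. The vectorfield $(g^{-1})^{0\gamma}\partial_{\gamma}$ is the $g$-gradient of the Cartesian time function $t$; since $\Rad$ and $\GeoAng_A$ are $\Sigma_t$-tangent they kill $t$, while $\Lunit t = 1$. Expanding $(g^{-1})^{0\gamma}\partial_{\gamma} = a\Lunit + b\Rad + c^{A}\GeoAng_A$ and taking $g$-inner products with the frame elements, the normalization $g(\Lunit,\Rad)=-\upmu$ (from \eqref{E:GLUNITRADUNITISMINUSONE} and \eqref{E:RADANDULGOOD}), the nullity $g(\Lunit,\Lunit)=0$, and the orthogonality $g(\Rad,\GeoAng_A) = g(\Lunit,\GeoAng_A) = 0$ force $c^{A} = 0$ and $b = -1/\upmu$, with $a$ a smooth function of $\Psi$. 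Thus
\[
(g^{-1})^{0\gamma}\partial_{\gamma}\Psi \;=\; -\tfrac{1}{\upmu}\,\Rad\Psi \;+\; \mathcal{O}(\Lunit\Psi).
\]
Plugging this back and multiplying by $\upmu$ produces exactly the stated leading term $\tfrac{1}{2}G_{\Lunit\Lunit}\Rad\Psi$, with all remaining contributions carrying a factor of $\upmu$ times $\Lunit\Psi$, which is $\upmu\,\mathcal{O}(\Lunit\Psi)$.

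The only non-routine step is the frame decomposition of $(g^{-1})^{0\gamma}\partial_{\gamma}$; this is where the geometric structure of $\Rad$ (its $\Sigma_t$-tangentiality and $g$-orthogonality to $\ell_{t,u}$) is used in an essential way, and it is what selects $\Rad\Psi$ rather than some other transversal derivative in the final formula.
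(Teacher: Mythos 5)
Your proof is correct and follows exactly the route the paper indicates (decomposing the $0$-component of the geodesic equation $\D_{\Lgeo}\Lgeo = 0$ and using $\upmu = 1/\Lgeo^0$); the details you supply—reducing to $\Lunit\upmu = \upmu\,\Gamma^0_{\alpha\beta}\Lunit^{\alpha}\Lunit^{\beta}$ and then frame-decomposing the $g$-gradient of $t$ so that $(g^{-1})^{0\gamma}\partial_{\gamma}\Psi = -\upmu^{-1}\Rad\Psi + \mathcal{O}(\Lunit\Psi)$—are the content of the short proof cited in \cite{jSgHjLwW2016}. One small remark: the coefficient you call ``$a$'' is in fact the constant $-g(\Radunit,\Radunit) = -1$ (this uses $(g^{-1})^{00} = -1$ and $g(\Lunit,\Radunit)=-1$), which is consistent with your claim, though the first error term $\upmu\,(g^{-1})^{0\gamma}G_{\beta\gamma}\Lunit^{\beta}\,\Lunit\Psi$ has a coefficient depending on $\Lunit^{\beta}$ and hence on the eikonal function as well, not just on $\Psi$—harmless, since the lemma's error term is schematic.
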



\subsubsection{A quick summary of the proof of shock formation in the irrotational case}
\label{SSS:QUICKSUMMARYOFPROOFOFSHOCKFORMATION}
We now summarize the proof of shock formation for solutions
to equation \eqref{E:SCALARMODELWAVE} for perturbations of
simple outgoing plane symmetric outgoing waves 
(which we described in Footnote~\ref{FN:SIMPLEPLANE} and Subsubsect.~\ref{SSS:PRELIMINARYDICUSSION});
see \cite{jSgHjLwW2016} for the complete proof in the case of two spatial dimensions,
which can be extended to the current setting of three spatial dimensions
using the techniques established in
\cites{dC2007,jS2014b}. 

\begin{enumerate}
	\item (\textbf{Dynamic geometric tensors, adapted to} $u$)
		As in the proof of the stability of Minkowski spacetime \cite{dCsK1993}, 
		one constructs various geometric tensors adapted to the 
		eikonal function $u$. In particular, one constructs a set of 
		``commutation vectorfields''
		\begin{align} \label{E:COMMSET}
			\Fullset
			& := \lbrace \Lunit, \Rad, \GeoAng_1, \GeoAng_2 \rbrace,
		\end{align}
		used to differentiate the equations and obtain estimates
		for the solution's derivatives. The set 
		$\Fullset$ 
		(whose elements we constructed in Subsubsect.~\ref{SSS:GEOMETRICCONSTRUCTIONS})
		spans the tangent space of spacetime at each point with $\upmu > 0$.

		To close the proof of shock formation outlined in Subsubsect.~\ref{SSS:SOMEREMARKSONTHEPROOF},
		one needs non-degenerate $L^{\infty}$ estimates for the $\Fullset$ derivatives of $\Psi$ and various tensorfields
		up to a certain order. However, it turns out that
		due to the special structure of the equations relative to geometric coordinates,
		one can obtain sufficient energy estimates
		by commuting the wave equation only with 
		vectorfields belonging to the commutation subset
		\begin{align} \label{E:TANCOMMSET}
			\Tanset
			& := \lbrace \Lunit, \GeoAng_1, \GeoAng_2 \rbrace,
		\end{align}
		which spans the tangent space of the characteristics $\mathcal{P}_u$
		at each point. 

		One may check that for $Z \in \Fullset$,
		the Cartesian components $Z^{\alpha}$ depend on the 
		first Cartesian coordinate partial derivatives of $u$.
		We can schematically denote this by $Z \sim \partial u$.
		Hence, the regularity of the vectorfields themselves
		is tied to the regularity of $\Psi$ through the eikonal equation
		\eqref{E:EIKONAL}. It turns out that this simple fact generates
		enormous technical complications into the derivation
		of energy estimates; see Step (7).
	\item (\textbf{The geometric structure of the commuted wave equation})
	To close the proof of shock formation, 
	one needs to commute the wave equation 
	many times with the elements of $\Tanset$
	and then derive energy estimates for $\Psi$ up to top order.
	More precisely, one commutes
	the $\upmu-$weighted wave equation 
	$\upmu \square_{g(\Psi)} \Psi = 0$.
	We stress that \emph{it is important that the weight in the previous equation is precisely $\upmu$};
	the weight leads to important cancellations in commutation identities and, at the same time, 
	is compatible with various degenerate error terms 
	that one encounters energy estimates, as describe below.
	The main challenge is to bound the commutator terms,
	whose basic structure is revealed by the following commutation identity,
	written in schematic form
	except for the second product on the RHS
	(which is written exactly up to the overall sign):
	\begin{align} \label{E:COMMUTEDWAVE}
		\upmu \square_g(P \Psi) 
   & = P(\upmu \square_g \Psi)
   		+ \upmu^{-1}
				\left\lbrace
					\deformarg{P}{\Lunit}{\Rad}
					+ 
					P \upmu
				\right\rbrace
				(\upmu \square_g \Psi)
					\\
		& \ \
			+ 
   		\upmu \deform{P} \cdot \D^2 \Psi  
   		+ 
   		\upmu \D \deform{P} \cdot \D \Psi.
			\notag
	\end{align}
	In \eqref{E:COMMUTEDWAVE},
	$\D$ is the Levi-Civita connection of $g$,
	$\deformarg{P}{\alpha}{\beta}
	:= \D_{\alpha} P_{\beta} + \D_{\beta} P_{\alpha}
	$
	is the deformation tensor of $P$,
	and $\deform{P} \cdot \D^2 \Psi$ 
	and $\D \deform{P} \cdot \D \Psi$
	schematically denote tensorial contractions
	involving the derivatives of $\deform{P}$ and $\Psi$.
	There are many important cancellations in the products 
	on RHS~\eqref{E:COMMUTEDWAVE} due the special structure 
	of the elements of $\Tanset$. 
	The net effect is that 
	when one decomposes the differentiations on RHS~\eqref{E:COMMUTEDWAVE}  
	relative to the rescaled frame \eqref{E:RESCALEDFRAME},
	\emph{none of the terms involve any singular factors of} $1/\upmu$.
	For example, factors such as $(1/\upmu) \Rad \Rad \Psi$ do not appear.
	This is completely consistent with the philosophy that the solution
	should remain regular relative to the geometric coordinates
	and is closely tied our definition of the strong null condition
	(which, roughly speaking, posits that there are no terms involving
	two differentiations in the transversal $\Rad$ direction).
	We also note that from this perspective,
	the operator $\upmu \D$ on RHS~\eqref{E:COMMUTEDWAVE}
	should not be viewed as having an ``extra'' $\upmu$ weight;
	for by definition \eqref{E:RADANDULGOOD}, the factor of $\upmu$
	gets soaked up into the definition of $\Rad$ when performing 
	decompositions. In view of these considerations,
	we note that the second product on RHS~\eqref{E:COMMUTEDWAVE}
	could in principle, after more than one commutation,
	introduce a crippling factor of 
	$
	1/\upmu
	$ 
	into the commuted equations,
	which at the lower derivative levels would obstruct the
	goal of obtaining non-degenerate estimates.
	However, the vectorfields are constructed so that 
	the sum $\deformarg{P}{\Lunit}{\Rad} + P \upmu$
	completely vanishes.
	Note also that by the above remarks,
	the factor $\D \deform{P}$ on RHS~\eqref{E:COMMUTEDWAVE}
	depends on \emph{three} derivatives of $u$.
	As we will explain in Step (7), this simple fact 
	is the source of most of the difficulty in the proof.
	\item (\textbf{Size assumptions on the data})
		The main idea of \cite{jSgHjLwW2016} was to treat a regime in which the initial data have
		pure $\mathcal{P}_u-$transversal derivatives,
		such as
		$\Rad \Psi$ 
		and $\Rad \Rad \Psi$, that are  
		of size $\approx \mathring{\updelta} > 0$,
		while all other derivatives, such as 
		$\Singletan \Rad \Psi$,
		$\Singletan \Psi$,  
		and $\Psi$ itself, 
		are of \emph{small} size $\mathring{\upepsilon}$,
		where the smallness of $\mathring{\upepsilon}$
		is allowed to depend on $\mathring{\updelta}$.
		These size assumptions roughly correspond to perturbations
		of simple outgoing plane symmetric solutions.
		A key point is that, due to the special structure of the
		covariant wave operator $\square_{g(\Psi)}$
		when expressed relative to the geometric coordinates (see Def.~\ref{D:GEOMETRICCOORDS})
		and the special structure of the elements of $\Fullset$,
		it is possible to propagate this hierarchy all the way up to the shock,
		except at the very high derivative levels.
	\item ($L^{\infty}$ \textbf{bootstrap assumptions} for $\Psi$)
		One assumes that on a bootstrap region $\mathcal{M}_{\Tboot;U_0}$
		with $U_0 \approx 1$ (see \eqref{E:MTUDEF}), 
		on which the shock has not yet formed 
		(but perhaps is about to form),
		$\Psi$ and sufficiently many\footnote{Roughly speaking, if $N_{Top}$ denotes the maximum number of times
		that we need to commute the wave equation when deriving energy estimates,
		then we make the $L^{\infty}$ bootstrap assumptions for the 
		up-to-order $N_{Top}/2$ derivatives of $\Psi$.} 
		of its $\Tanset$ derivatives
		are bounded in $\| \cdot \|_{L^{\infty}}$ by $\leq \varepsilon$.
		At the end of the proof, one shows that the logic closes if
		$\varepsilon = C \mathring{\upepsilon}$ for a sufficiently large constant $C$.
	\item ($L^{\infty}$ \textbf{and pointwise estimates for many geometric objects})
		Using the bootstrap assumptions for $\Psi$ and the smallness of $\mathring{\upepsilon}$,
		one derives, on the region
		$\mathcal{M}_{\Tboot;U_0}$, non-degenerate $L^{\infty}$ estimates for the
		$\Tanset$ derivatives of the error terms appearing on RHS~\eqref{E:COMMUTEDWAVE}
		up to a certain order. The $L^{\infty}$ estimates can be derived by commuting various transport equations,
		including the evolution equation \eqref{E:SCHEMATICEVOLUTIONEQUATIONINVERSEFOLIATIONDENSITY}
		for $\upmu$, with the elements of $\Tanset$.
		The vast majority of these terms are 
		shown to be of small size $\mathcal{O}(\varepsilon)$ on $\mathcal{M}_{\Tboot;U_0}$.
		To derive energy estimates,
		one also needs to derive $L^{\infty}$ estimates for the 
		very low-level $\Rad$ derivatives of various tensorfields including $\upmu$. 
		These estimates are easy to obtain by studying various transport equations
		and using the $L^{\infty}$ estimates for the $\Tanset$ derivatives,
		but we will not focus on this issue here.
		Using these $L^{\infty}$ estimates, one then derives pointwise estimates
		for the error terms on RHS~\eqref{E:COMMUTEDWAVE}
		and the higher-order $\Tanset-$commuted analogs of RHS~\eqref{E:COMMUTEDWAVE},
		all the way up to top order. The pointwise estimates are needed in preparation
		for the energy estimates, which we describe below.
		In all of these estimates, it is important
		to decompose tensors relative to the rescaled frame \eqref{E:RESCALEDFRAME}.
	\item (\textbf{Showing that the shock forms})
		The main ideas behind showing that $\upmu$
		goes to $0$ in finite time and that a shock forms
		were already presented in Subsubsect.~\ref{SSS:SOMEREMARKSONTHEPROOF}.
		Here, we present them in more detail.
		Specifically, 
		given the 
		$L^{\infty}$ bootstrap assumptions for $\Psi$
		and the $L^{\infty}$ estimates,
		the proofs that $\upmu$
		goes to $0$ in finite time and that a shock forms
		are straightforward:
		one easily shows that the first product
		on RHS~\eqref{E:SCHEMATICEVOLUTIONEQUATIONINVERSEFOLIATIONDENSITY}
		is, relative to the geometric coordinates,
		approximately constant in time, that is, that
		$
		\displaystyle
		\Lunit
		\left(
		\frac{1}{2} G_{\Lunit \Lunit} \Rad \Psi
		\right) = \mathcal{O}(\mathring{\upepsilon})
		$.
		Moreover, one shows that the second product
		verifies
		$
		\upmu \mathcal{O}(\Lunit \Psi)
		= \mathcal{O}(\mathring{\upepsilon})
		$
		and is therefore a small error term.
		Hence, one obtains
		$
		[\Lunit \upmu](t,u,\vartheta) 
		= 
		\frac{1}{2} [G_{\Lunit \Lunit} \Rad \Psi](0,u,\vartheta)
		+
		\mathcal{O}(\mathring{\upepsilon})
		$
		and, by integrating in time 
		(see \eqref{E:LISDDT})
		and taking into account \eqref{E:UPMUINITIAL} and the initial $\mathcal{O}(\mathring{\upepsilon})$
		smallness of $\Psi$, 
		that\footnote{In the proof, one can allow the implicit
		constants in $\mathcal{O}(\mathring{\upepsilon})$ to depend on $t$. The reason is that
		it is possible to make a good guess about the 
		(data-dependent)
		time of blow up. Therefore, one needs only
		to propagate estimates for an amount of time that can be estimated to high accuracy in advance.}
		$\upmu(t,u,\vartheta)
		= 1 
		+ \frac{1}{2} t [G_{\Lunit \Lunit} \Rad \Psi](0,u,\vartheta)
		+
		\mathcal{O}(\mathring{\upepsilon})
		$.
		Hence, for data such that the term
		$
		\displaystyle
		\frac{1}{2} t [G_{\Lunit \Lunit} \Rad \Psi](0,u,\vartheta)
		$
		is sufficiently negative 
		at some value of $(u,\vartheta)$
		(in particular, large enough in magnitude to dominate the term $\mathcal{O}(\mathring{\upepsilon})$),
		one concludes that $\upmu$ vanishes in finite time. Moreover,
		one shows that in a past neighborhood of any point where $\upmu$ vanishes, we have
		$|\Rad \Psi| \gtrsim 1$, or equivalently, that $|\Radunit \Psi| := |\Radunit^a \partial_a \Psi| \gtrsim 1/\upmu$.
		Since $\Radunit$ is comparable to a Cartesian coordinate partial derivative $\partial$,
		we conclude that when $\upmu$ vanishes,
		$|\partial \Psi|$ blows up like 
		$
		\displaystyle
		\frac{1}{\upmu}
		$.
		\item (\textbf{Energy estimates up to top order})
		To improve the $L^{\infty}$ bootstrap assumptions for $\Psi$
		(which were used in particular in Step (6) to prove that $\upmu$ vanishes in finite time),
		the main task is to derive energy estimates for sufficiently many 
		$\Tanset$ derivatives of $\Psi$
		that \emph{do not degenerate} as $\upmu$ goes to $0$;
		one can then use Sobolev embedding to obtain, under a suitable smallness
		assumption on the data, the desired $L^{\infty}$ bounds.
		We stress again that one does not need to derive energy estimates for the $\Rad-$commuted wave equation.
		This is helpful because in the solution regime under study, the
		energies corresponding to the $\Tanset-$commuted equation are all initially of small
		size $\mathcal{O}(\mathring{\upepsilon}^2)$
		(while the energies for the $\Rad-$commuted wave equation are of
		the larger size $\mathcal{O}(\mathring{\updelta}^2)$).

		Note that the energy estimates for $\Psi$ are coupled to those for $u$
		in view of the fact that the vectorfield commutators $P \in \Tanset$
		for the wave equation depend on $\partial u$.
		To derive energy estimates for $\Psi$ and $u$ up to top order,
		one uses the pointwise estimates from Step (4) and
		a geometric energy method.
		More precisely, with $\Tanset^M$ denoting an arbitrary $M^{th}-$order
		string of vectorfields constructed out of the elements of $\Tanset$,
		one derives energy estimates for $\Tanset^M \Psi$ for $1 \leq M \leq N_{Top}$
		(with $N_{Top}$ sufficiently large).\footnote{
		For reasons described to below, the proofs require many derivatives.
		In \cite{jSgHjLwW2016},
		the authors showed that the proof closes if, at time $0$, 
		$\Psi \in H^{19}$
		and $\partial_t \Psi \in H^{18}$
		(with suitable tensorial smallness assumptions enforcing the 
		$\mathring{\upepsilon}-\mathring{\updelta}$ hierarchy described above).}
		One can derive the basic energy identity 
		by using a suitable version of the vectorfield multiplier method, 
		that is, by applying the divergence theorem
		on regions of the form $\mathcal{M}_{t,u}$ (see Figure~\ref{F:SOLIDREGION})
		to the energy current vectorfield\footnote{Here we raise and lower indices with
		$g^{-1}$ and $g$.}
		$J^{\alpha}[\Tanset^M \Psi] := \enmomtensor_{\ \beta}^{\alpha}[\Tanset^M \Psi] \Mult^{\beta}$.
		Here,
		$
		\enmomtensor_{\mu \nu}[\Tanset^M \Psi]
			:= (\D_{\mu} \Tanset^M \Psi) \D_{\nu} \Tanset^M \Psi
			- 
			\frac{1}{2} g_{\mu \nu} (\D^{\alpha} \Tanset^M \Psi) 
			(\D_{\alpha} \Tanset^M \Psi)
		$ 
		is the energy-momentum tensorfield of $\Tanset^M \Psi$
		and $\Mult := (1 + 2 \upmu) \Lunit + 2 \Rad$ is a \emph{multiplier vectorfield}, 
		which is timelike\footnote{That is, $g(\Mult,\Mult) = - 4 \upmu(1+\upmu) < 0$.}
		with respect to $g$. The careful placement of the $\upmu$ weights in the definition
		of $\Mult$, both the explicit one and the implicit one inherent in the relation 
		$\Rad = \upmu \Radunit$,
		are essential for generating suitable energies. More precisely,
		the divergence theorem yields an integral identity
		involving coercive ``energies'' 
		$
		\displaystyle
		\mathbb{E}[\Tanset^M \Psi](t,u)
		:= 
		\int_{\Sigma_t^u}
				J_{\alpha}[\Tanset^M \Psi] \Transport^{\alpha}
		$
		(where the material derivative vectorfield $\Transport$ is the future-directed normal to $\Sigma_t$)
		and also\footnote{In the interest of brevity, 
		we have avoided discussing the volume forms corresponding to the integrals
		$
		\int_{\Sigma_t^u}
		\cdots
		$
		and
		$
		\int_{\mathcal{P}_u^t}
		\cdots
		$.
		Let us simply note that the implicit forms are non-degenerate in the sense
		that relative to the geometric coordinates,
		they remain uniformly bounded from above and below (strictly away from zero),
		all the way up to the shock.
		\label{FN:NOTWRITINGFORMS}} 
		coercive ``null fluxes'' 
		$
		\displaystyle
		\mathbb{F}[\Tanset^M \Psi](t,u)
		:= 
		\int_{\mathcal{P}_u^t}
				J_{\alpha}[\Tanset^M \Psi] \Lunit^{\alpha}
		$,
		both of which are needed to close the estimates. 
		The integral identity, which forms the starting
		point for the $L^2-$type analysis, 
		follows from integrating the following
		divergence identity over regions of the form $\mathcal{M}_{t,u}$
		(see \eqref{E:MTUDEF}) with respect to a suitable volume form:
		\begin{align} \label{E:DIVID}
		\upmu \D_{\alpha} J^{\alpha}[\Tanset^M \Psi] 
		= 
		\Mult \Tanset^M \Psi
		\cdot 
		(\upmu \square_{g(\Psi)} \Tanset^M \Psi)
		+ 
		\frac{1}{2}
		\upmu \enmomtensor^{\alpha \beta}[\Psi] \deformarg{\Mult}{\alpha}{\beta},
		\end{align}
		where 
		$\deformarg{\Mult}{\alpha}{\beta}
		 := 
		\D_{\alpha} {\Mult}_{\beta} + \D_{\beta} {\Mult}_{\alpha}
		$
		is the deformation tensor of $\Mult$.
	The coerciveness of the energies and null fluxes
	are consequences of
	the dominant energy condition,
		which is the property 
		$\enmomtensor_{\alpha \beta}[\Tanset^M \Psi] V^{\alpha} W^{\beta} \geq 0$
		whenever $V$ and $W$ are future-directed\footnote{Recall that $V$ being future-directed
		simply means that $V^0 > 0$, where $V^0$ is the Cartesian time component of $V$.} 
		causal\footnote{$V$ being causal means that $g(V,V) \leq 0$.} 
		vectorfields.

		In \cite{jSgHjLwW2016}, readers may find a detailed
		description of how to close the energy estimates
		and why they are difficult to derive. Here,
		we highlight some of the main ideas.
		See also our companion article \cite{jLjS2016b}
		for a similar discussion in the context of 
		solutions to the compressible Euler equations 
		in two spatial dimensions with vorticity.
		The energy estimates are difficult to derive
		for two main reasons. First, a careful computation reveals that
		the energies $\mathbb{E}[\Tanset^M \Psi](t,u)$
		and the null fluxes
		$\mathbb{F}[\Tanset^M \Psi](t,u)$
		contain $\upmu$ weights, 
		inherited from the $\upmu$ weights found in the definition of $\Mult$. 
		Consequently,  
		$\mathbb{E}[\Tanset^M \Psi]$
		and
		$\mathbb{F}[\Tanset^M \Psi]$
		provide only very weak control
		over certain directional derivatives near the shock (where $\upmu$ is small).
		This is a serious difficulty because one encounters ``strong'' error terms 
		in the energy estimates arising, for example, from error terms
		on the RHS of the wave equations\footnote{As we indicated in \eqref{E:COMMUTEDWAVE},
		it is important to commute the $\upmu-$weighted wave equation.} 
		$\upmu \square_{g(\Psi)} \Tanset^M \Psi = \cdots$
		that \emph{do not have $\upmu$ weights}.
		To control such strong error terms, one needs to exploit various special structures.
		For example, one relies on the availability of a subtle spacetime integral 
		with a favorable ``friction-type'' sign,
		first identified by Christodoulou \cite{dC2007},
		that is generated by the term 
		$
		\displaystyle
		\frac{1}{2}
		\upmu \enmomtensor^{\alpha \beta}[\Psi] \deformarg{\Mult}{\alpha}{\beta}
		$
		on RHS~\eqref{E:DIVID} 
		(and which also appears in the energy identities). 
		Through a detailed analysis of $\upmu$ and $\Lunit \upmu$,
		the spacetime integral can be shown to be strong in regions where $\upmu$ is small,
		thanks to the negativity\footnote{That is, a key part of the proof involves showing that
		when $\upmu$ is small, $\Lunit \upmu$ must be quantitatively negative.} 
		of $\Lunit \upmu$.
		In fact, the spacetime integral can be used to absorb many of the ``strong'' error terms.

		The second reason that the energy estimates are difficult is that, 
		as we mentioned above, the top-order derivatives of $u$ are hard to estimate.
		To further explain this difficulty, we will count derivatives; 
		it suffices to explain the difficulty that arises after commuting the wave equation 
		one time, as we did in equation \eqref{E:COMMUTEDWAVE}.
		To proceed, we recall that to control RHS~\eqref{E:COMMUTEDWAVE}, we must bound
		\emph{three} derivatives of $u$ in $L^2$. The naive way to achieve this goal is
		to commute the eikonal equation \eqref{E:EIKONAL} 
		with three derivatives, schematically denoted by ``$\partial^3$'',
		to obtain the schematic evolution equation
		$(g^{-1})^{\alpha \beta}(\Psi) \partial_{\alpha} u \partial_{\beta} \partial^3 u 
		= \partial^3 \Psi \cdot \partial u
		+
		l.o.t
		$.
		The problem with this approach is that the RHS of this evolution equation 
		for $\partial^3 u$
		depends on \emph{three} derivatives of $\Psi$, which is inconsistent with the
		regularity of $\Psi$ obtained from
		deriving energy estimates for equation \eqref{E:COMMUTEDWAVE}
		(the energy estimates yield control over only two derivatives of $\Psi$ in $L^2$).
		Clearly this loss of a derivative cannot be overcome by further commuting the equations.
		To overcome it, one uses strategies employed in \cite{dCsK1993} and later in
		\cite{sKiR2003}, including using that the special structure of the vectorfields in
		$\Tanset$ leads to the absence of the worst imaginable top-order derivatives of $u$.
		That is, the third derivatives of $u$ appearing on RHS~\eqref{E:COMMUTEDWAVE}
		have special tensorial structures. One can bound these terms
		by exploiting the tensorial structures with the help of modified quantities and,
		by using elliptic estimates\footnote{Note that in two spatial dimensions,
		the $\ell_{t,u}$ are one-dimensional, and it turns out that the energy estimates close
		without elliptic estimates.}
		on 
		$\ell_{t,u}$.
		By ``modified quantities,'' we mean that one finds special combinations of
		the derivatives of $\Psi$ and $u$ that satisfy a good evolution equation
		allowing one, with the help of the aforementioned elliptic estimates, 
		to avoid the loss of a derivative.
		To construct the modified quantities and derive elliptic estimates, 
		one needs precise geometric decompositions 
		of the derivatives of $\Psi$ and $u$,
		adapted to the acoustic characteristics $\mathcal{P}_u$.

		In carrying out the above scheme, one encounters another serious difficulty:
		it turns out that introducing the modified quantities, 
		which are essential to avoid losing a derivative at the top order,
		leads to the presence
		of a difficult factor of $1/\upmu$ into the top order energy identities.
		We may caricature\footnote{In equation \eqref{E:CARICENERGYINEQ},
		we have ignored the presence of other difficult terms that lead to related but distinct difficulties.} 
		the effect of this factor in the basic energy inequality
		as follows, where $\mathbb{E}_{Top}$ denotes the top-order energy along $\Sigma_t$
		and $A$ is a universal positive constant, 
		\emph{independent of the number of times that the equations are commuted}:
		\begin{align} \label{E:CARICENERGYINEQ}
			\mathbb{E}_{Top}(t)
			\leq 
			\mathbb{E}_{Top}(0)
			+
			A
			\int_{s=0}^t
				\sup_{\Sigma_s}
				\left|
					\frac{\frac{\partial}{\partial t} \upmu}{\upmu}
				\right|
				\cdot
				\mathbb{E}_{Top}(s)
			\, ds
			+
			\cdots.
		\end{align}
		To derive a Gronwall estimate for $\mathbb{E}_{Top}(t)$,
		we need to bound
		$
		\displaystyle
		\sup_{\Sigma_s}
				\left|
					\frac{\frac{\partial}{\partial t} \upmu}{\upmu}
				\right|
		$.
		The true estimate is lengthy to state, but 
		it can be caricatured as follows:
		$
		\displaystyle
		\sup_{\Sigma_s}
				\left|
					\frac{\frac{\partial}{\partial t} \upmu}{\upmu}
				\right|
		\leq 
		\frac{\TrandatasizeWithFactor}{1 - \TrandatasizeWithFactor s}
		$,
		where
		$
		\TrandatasizeWithFactor > 0
		$
		is a constant depending on the data.
		Moreover, with
		\[
		\upmu_{\star}(s) := \min_{\Sigma_s} \upmu,
		\]
		one can prove an estimate that can be caricatured as
		\[
		\upmu_{\star}(s) \sim 1 - \TrandatasizeWithFactor s.
		\]
		Hence, from Gronwall's inequality\footnote{We stress that since the factor $1/\upmu$ 
		is present in the energy identities, one must derive
		detailed information about the way that $\upmu_{\star}$ 
		vanishes in order to close the energy estimates.
		The reason is that the vanishing rate of $\upmu_{\star}$ is tied to the blowup-rate
		of the high-order energies. In particular, it is crucially important that $\upmu_{\star}$
		goes to $0$ \emph{linearly}, as is captured by our caricature estimate.}
		one obtains an a priori estimate than can be caricatured as follows:
		\[
		\mathbb{E}_{Top}(t)
		\leq 
		\mathbb{E}_{Top}(0)
		\cdot
		\upmu_{\star}^{-A}(t)
		+ 
		\cdots.
		\]
		In particular, $\mathbb{E}_{Top}(t)$ can blow up as $\upmu$ vanishes.

		As we have noted, the argument sketched above relies on
		having non-degenerate $L^{\infty}$ estimates at the lower derivative levels,
		which are needed to control various error terms.
		Thus, the only hope of validating the above estimate for
		$\mathbb{E}_{Top}(t)$
		and thus closing the problem 
		is to derive \emph{less degenerate} energy
		estimates below top order. For if the above degenerate energy estimate 
		were the best one we could prove at all derivative levels,
		then we would not be able to recover the $L^{\infty}$ bootstrap assumptions
		from Step (4); such degenerate energy estimates, when combined with Sobolev
		embedding, would yield only that the $L^{\infty}$ norms of the low-order derivatives
		of $\Psi$ can also blow up as
		$\upmu_{\star}$ vanishes, which would completely obstruct our efforts 
		to justify the non-degenerate estimates at the lower derivative levels.
		To overcome this difficulty, one exploits the fact that below top order,
		it is permissible to allow the aforementioned loss of one derivative
		in the difficult wave equation error terms that depend on the eikonal function.
		In allowing the loss, one can avoid using modified quantities and
		thus avoid introducing the factor of $1/\upmu$ 
		into the below-top-order energy identities.
		The price one pays is that this approach
		couples the below-top-order energy identities to the degenerate top-order ones.
		Nonetheless, this approach allows one to derive less degenerate
		estimates below top order.
		More precisely, 
		via an energy estimate ``descent scheme,''
		based on successively reducing the strength of 
		the singularity via the estimate\footnote{This estimate is just a quasilinear version of the bound
		$\int_{s=0}^t s^{-B} \, ds \lesssim t^{1 - B}$, where $s=0$ represents the ``vanishing'' of $\upmu$.
		In the proof of the estimate, it is again critically important that
		$\upmu_{\star}$ vanishes linearly.} 
		$
		\displaystyle
		\int_{s=0}^t
			\upmu_{\star}^{-B}(s)
		\, ds
		\lesssim
		\upmu_{\star}^{1-B}(t)
		$,
		one can show that the below-top-order energies satisfy a hierarchy of 
		successively less degenerate estimates of the form
		\[
		\mathbb{E}_{Top-1}(t)
		\leq 
		\mbox{\upshape data}
		\cdot
		\upmu_{\star}^{-(A-2)}(t),
		\]
		\[
		\mathbb{E}_{Top-2}(t)
		\leq 
		\mbox{\upshape data}
		\cdot
		\upmu_{\star}^{-(A-4)}(t),
		\]
		\[
		\cdots,
		\]
		until one reaches a ``middle level,'' below which all energies are bounded:
		\[
		\mathbb{E}_{Mid}(t),
		\mathbb{E}_{Mid-1}(t),
		\mathbb{E}_{Mid-2}(t),
		\cdots,
		\mathbb{E}_1(t)
		\leq 
		\mbox{\upshape data}.
		\]
		From these non-degenerate estimates, Sobolev embedding,
		and a small data assumption,
		one can finally improve the $L^{\infty}$ bootstrap assumptions for $\Psi$, 
		which closes the whole process. The large number of derivatives needed\footnote{In \cite{jSgHjLwW2016},
		the authors commuted the wave equations $18$ times in order
		close the estimates.} 
		to close
		the proof is due to the large number times that one needs to descend below
		top order in order to reach the non-degenerate energies $\mathbb{E}_{Mid}(t)$.
		\end{enumerate}


\subsection{A preview of the proof of shock formation for solutions to the compressible Euler equations
in the presence of vorticity}
\label{SS:PREVIEWONSHOCKS}
In Subsubsect.~\ref{SSS:QUICKSUMMARYOFPROOFOFSHOCKFORMATION},
we overviewed a framework, based on techniques introduced by Christodoulou, 
for proving shock formation in solutions to 
quasilinear wave equations with suitable nonlinearities, 
a special case of which is the irrotational compressible Euler equations.
We now overview, without giving proofs, 
how the new geometric and analytic structures
provided by Theorems~\ref{T:GEOMETRICWAVETRANSPORTSYSTEM} and \ref{T:STRONGNULL}
fit in with the above framework and
allow one to prove shock formation in the presence of vorticity.
We restrict our attention to discussing
solutions that are close to simple outgoing plane symmetric outgoing solutions
(see Subsect.~\ref{SSS:PRELIMINARYDICUSSION})
in three spatial dimensions with spatial topology\footnote{The factor $\mathbb{T}^2$ corresponds to perturbations
away from plane symmetry.} 
$\Sigma_t = \mathbb{R} \times \mathbb{T}^2$.
This is the solution regime that we analyze in detail in our forthcoming
works \cites{jLjS2016b,jLjS2017}
(where the spatial topology is $\Sigma_t = \mathbb{R} \times \mathbb{T}^2$
in the case of two spatial dimensions treated in \cite{jLjS2016b}).

\subsubsection{The regime under consideration: solutions close to simple outgoing plane symmetric waves}
\label{SSS:SOLUTIONREGIME}
	We expect that our framework for proving finite-time shock formation 
	can be applied to various kinds of initial data, 
	including small compactly supported nearly spherically symmetric data.
	However, we restrict our attention here to the simplest vorticity-containing 
	solutions to which our framework applies: 
	(non-symmetric) perturbations of simple outgoing\footnote{We recall that, roughly speaking, outgoing means
	right-moving, as is depicted in Figure~\ref{F:FRAME}.} 
	plane symmetric solutions.
	Note that simple outgoing plane symmetric 
	solutions themselves are irrotational, but perturbations of them generally 
	have non-zero vorticity. We have already discussed simple outgoing plane symmetric 
	solutions in Subsubsect.~\ref{SSS:QUICKSUMMARYOFPROOFOFSHOCKFORMATION},
	in the context of the scalar wave equation \eqref{E:SCALARMODELWAVE}.
	However, in preparation for the subsequent discussion, 
	we now give a slightly different description of such solutions
	in the context of the compressible Euler equations.
	To this end, we will rely on Riemann's famous method \cite{bR1860} of Riemann invariants.
	Our discussion applies to any equation of state except for the one
	$
	\displaystyle
	p = C_0 - \frac{C_1}{\rho}  = C_0 - C_1 \exp(-\Densrenormalized)
	$
	corresponding to a Chaplygin gas,\footnote{The Chaplygin gas equation of state corresponds to the exceptional Lagrangian
	mentioned in Footnote~\ref{FN:EXCEPTIONALLAGRANGIANS}
	In plane symmetry, the Riemann invariants 
	$\mathcal{R}_{\pm}$ 
	for the Chaplygin gas
	solve a \emph{totally linearly degenerate} system
	of PDEs, which is not expected to exhibit shock formation; 
	see \cite{aM1984} for more discussion on totally linearly degenerate systems.} 
	(where $C_0$ and $C_1$ are arbitrary constants);
	see also Footnote~\ref{FN:NOSHOCKSFORCHAPLYGIN} for a description of why our arguments
	do not apply to the Chaplygin gas.
	By a ``plane symmetric'' solution,
	we mean that relative to Cartesian coordinates, we have
	$\Densrenormalized = \Densrenormalized(t,x^1)$,
	$v^1 = v^1(t,x^1)$,
	and
	$v^2 = v^3 \equiv 0$.

	In plane symmetry, the compressible
	Euler equations are equivalent to the system
	\[
		\uLunit \mathcal{R}_- = 0, \qquad \Lunit \mathcal{R}_+ = 0,
	\]
	where 
	$\mathcal{R}_{\pm} := v^1 \pm F(\Densrenormalized)$,
	$F$ is defined by $F'(\Densrenormalized) = c_s(\Densrenormalized)$
	with $F(0) = 0$,
	where the latter is a convenient normalization condition.
	Moreover, we have the explicit formulas
	\[
	\uLunit = \partial_t + (v^1 - \Speed) \partial_1,
	\qquad
	\Lunit = \partial_t + (v^1 + \Speed) \partial_1.
	\]
	We will study simple plane symmetric solutions such that
	$\Densrenormalized$ and $v^1$ (undifferentiated) are near $0$.
	As we explained below equation \eqref{E:BACKGROUNDDENSITY},
	the background solution 
	$(\Densrenormalized,v^1) \equiv (0,0)$
	corresponds to a constant state with non-zero density $\bar{\rho} > 0$,
	which is an analog of the global background solution
	$\Psi \equiv 0$ from Subsubsect.~\ref{SSS:QUICKSUMMARYOFPROOFOFSHOCKFORMATION}.
	In terms of the Riemann invariants, the background solution 
	takes the form $\mathcal{R}_- = \mathcal{R}_+ \equiv 0$.
	By a ``simple'' plane symmetric solution in the present context, 
	we mean that one of the Riemann invariants,
	say $\mathcal{R}_-$, completely vanishes.
	Roughly, ``simple'' means that there is only (say) a right-moving
	(that is, outgoing)
	wave rather than a combination of left-moving and right-moving waves.

	We now discuss the formation of shocks in simple plane symmetric solutions
	in which $\mathcal{R}_-$ is identically zero.
	Applying Riemann's methods to the evolution equation
	$\Lunit \mathcal{R}_+ = 0$,
	we easily deduce by differentiating the equation with $\partial_1$
	that for suitable smooth initial conditions,
	$\partial_1 \mathcal{R}_+$ experiences a Riccati-type blowup 
	along a characteristic\footnote{Here, in the setting of plane symmetry,
	a characteristic is simply an integral curve of $\Lunit$.} 
	while $\mathcal{R}_+$ remains bounded.
	That is, a shock forms\footnote{The shock-formation 
	argument does not work for the Chaplygin gas. 
	The reason is that for this equation of state,
	we have $\Lunit = \partial_t + (\mathcal{R}_- + C) \partial_1$
	(where $C$ is a constant), while $\mathcal{R}_- \equiv 0$ by assumption.
	That is, the evolution equation $\Lunit \mathcal{R}_+ = 0$
	is effectively semilinear in this case.
	\label{FN:NOSHOCKSFORCHAPLYGIN}} 
	through a mechanism similar to the one
	that drives singularity formation in solutions to Burgers' equation.
	In the rest of Subsect.~\ref{SS:PREVIEWONSHOCKS}, we also assume that 
	$\| \mathcal{R}_+ \|_{L^{\infty}(\Sigma_0)}$ 
	is small, a condition that is propagated by the flow of the equations.
	Then perturbations (away from plane symmetry)
	of the corresponding shock-forming solution will be 
	$L^{\infty}-$close (at least initially)
	to the constant state solution described in the previous paragraph;
	this $L^{\infty}-$closeness assumption is convenient
	but could most likely be relaxed.

		\subsubsection{Elements of the proof and the size of the data}
		\label{SSS:FEATURESINCOMMONWITHIRROTATIONALCASE}
		We now describe how to prove shock formation for
		perturbations of the simple outgoing plane symmetric solutions described in the previous subsubsection, 
		where the perturbations belong to a suitable Sobolev space (without symmetry assumptions).
		Although the method of Riemann invariants is convenient for generating a family of shock-forming
		solutions, it is not applicable to perturbations away from plane symmetry.
		Hence, to study the perturbed solutions,
		we will use the formulation of the equations provided by
		Theorem~\ref{T:GEOMETRICWAVETRANSPORTSYSTEM}.

		A large part of the proof consists of the same steps described in
		Subsubsect.~\ref{SSS:QUICKSUMMARYOFPROOFOFSHOCKFORMATION}.
		The reason is that $\lbrace v^i \rbrace_{i=1,2,3}$ and $\Densrenormalized$ solve the covariant
		wave equations \eqref{E:VELOCITYWAVEEQUATION}-\eqref{E:RENORMALIZEDDENSITYWAVEEQUATION},
		which are similar to the wave equations discussed in
		Subsubsect.~\ref{SSS:QUICKSUMMARYOFPROOFOFSHOCKFORMATION}.
		In particular, it is straightforward to show that 
		for perturbations of the simple outgoing plane symmetric solutions described above,
		the initial data for the ``wave variables''
		$v^i$ and $\Densrenormalized$ verify 
		$\mathring{\upepsilon}-\mathring{\updelta}$
		size assumptions that are similar to the ones described 
		in Step (3) of Subsubsect.~\ref{SSS:QUICKSUMMARYOFPROOFOFSHOCKFORMATION}.
		The new feature in the analysis here
		is the presence of inhomogeneous terms in the wave equations.
		That is, if not for the inhomogeneous terms on 
		RHSs~\eqref{E:VELOCITYWAVEEQUATION}-\eqref{E:RENORMALIZEDDENSITYWAVEEQUATION},
		the proof outlined in
		Subsubsect.~\ref{SSS:QUICKSUMMARYOFPROOFOFSHOCKFORMATION} 
		would go through without significant changes.
	Our main goal in Subsect.~\ref{SS:PREVIEWONSHOCKS} is to explain why, 
	under a suitable $\mathring{\upepsilon}-\mathring{\updelta}$
	smallness-largeness hierarchy similar to the one described 
	in the irrotational case in Subsubsect.~\ref{SSS:QUICKSUMMARYOFPROOFOFSHOCKFORMATION},
	a shock forms in the solution.
	Our assumptions on the data of the specific vorticity are that
	$\Vortrenormalized$ and \emph{all} of its derivatives up to top order
	are initially of small size $\mathcal{O}(\mathring{\upepsilon})$
	(as measured by appropriate norms).
	We aim to propagate the smallness of
	$\Vortrenormalized$ and to show that it 
	\emph{does not interfere with the shock formation processes}
	described in Subsubsect.~\ref{SSS:QUICKSUMMARYOFPROOFOFSHOCKFORMATION}.
	That is, the dynamics are not significantly distorted by the presence of 
	small amounts of vorticity.
	One of course expects that, 
	like the high-order energies in the irrotational case,
	the $L^2$ norms of the high-order derivatives of
	$\Vortrenormalized$ can blow up as $\upmu \to 0$,
	and that controlling their blowup-rates
	is at the heart of closing the proof.

	We also note that it is straightforward to 
	construct data such that the solution has non-vanishing vorticity
	at the shock.
	To explain this, we first note that
	it is easy to perturb the data of
	the simple plane symmetric solutions described above
	so that $\Vortrenormalized|_{\Sigma_0}$
	is everywhere non-zero.
	Then using the evolution equation \eqref{E:RENORMALIZEDVORTICTITYTRANSPORTEQUATION},
	it is straightforward to show that
	$\Vortrenormalized$ remains
	strictly non-zero,\footnote{To prove this, one 
	considers $\upmu \times \mbox{\eqref{E:RENORMALIZEDVORTICTITYTRANSPORTEQUATION}}$.
	It is easy to show that $\upmu \Transport = \frac{d}{du}$
	along the integral curves of $\upmu \Transport$ and that
	the factors $\upmu \partial_a v^i$ 
	on the RHS remain uniformly bounded all the way up to the shock.
	We can therefore view $\upmu \times \mbox{\eqref{E:RENORMALIZEDVORTICTITYTRANSPORTEQUATION}}$
	as a linear ODE in $\Vortrenormalized$ with regular coefficients, 
  and by the uniqueness of the $0$ solution,
	$\Vortrenormalized$ never vanishes.
	} 
	all the way up to the shock as desired.

	\subsubsection{Geometric vectorfields and their interaction with the transport operator}
	\label{SSS:GEOMETRICVECINTERACTWITHTRANSPORT}
		As we described in Subsubsect.~\ref{SSS:QUICKSUMMARYOFPROOFOFSHOCKFORMATION}, 
		to close the proof of shock formation, it is critically important to construct, 
		with the help of an eikonal function corresponding to 
		the acoustical metric $g$ (defined in \eqref{E:ACOUSTICALMETRIC}), 
		a set of commutation vectorfields 
		that are adapted to the acoustic characteristics;
		we need such vectorfields to control solutions
		to the wave equations \eqref{E:VELOCITYWAVEEQUATION}-\eqref{E:RENORMALIZEDDENSITYWAVEEQUATION}.
		A key observation of the present work is that 
		the elements of $\Fullset$ exhibit good commutation properties with
		$\upmu \partial_{\alpha}$, where 
		$\partial_{\alpha}$, ($\alpha = 0,1,2,3$), is any Cartesian coordinate partial derivative vectorfield.
		By this, we mean that for $Z \in \Fullset$ and scalar functions $f$,
		we can bound\footnote{Here $[P,Q]$ denotes the commutator of the differential operators $P$ and $Q$.} 
		$[Z, \upmu \partial_{\alpha}] f$
		in terms of the first-order $\Fullset$ derivatives of $f$
		\emph{without any dangerous factors of $1/\upmu$ appearing}.
		Schematically, we may express this by
		$[\Fullset, \upmu \partial_{\alpha}] \sim \Fullset$.
		To explain this in more detail, we first define
		$\gamma_{AB}:=g(\GeoAng_A, \GeoAng_B)=g_{ab} \GeoAng_A^a \GeoAng_B^b$ for $A,B=1,2$.
		We also define $\gamma^{-1}_{AB}$ to be the inverse of the $(2 \times 2)$ matrix 
		$\gamma_{AB}$.
		Then we have the following identity for vectorfields:
		\begin{align} \label{E:CARTESIANINTERMSOFGEOMETRIC}
		\upmu \partial_i
		=
		(g_{ai} \Radunit^a) \Rad
			+ 
			\upmu
			\sum_{A,B=1}^2\left(
				\gamma^{-1}_{AB} g_{ai} \GeoAng_A^a
			\right)
			\GeoAng_B.
		\end{align}

	 The reason that the commutator $[Z,\upmu \partial_i]$ is controllable
		is that the elements of $\Fullset$ are designed to have good commutators
		with each other 
		while the $\Fullset-$derivatives of the scalar functions
		$g_{ai} \Radunit^a$
		and
		$
		\displaystyle
		\upmu
			\left(
				\gamma^{-1}_{AB} g_{ai} \GeoAng_A^a
			\right)
		$
		on RHS~\eqref{E:CARTESIANINTERMSOFGEOMETRIC}
		are simple error terms;
		the $\Fullset-$derivatives of the Cartesian components $g_{ab}$ can be
		controlled in terms of the $\Fullset$-derivatives of $\Densrenormalized$ and the Cartesian components $v^a$
		(as is evident from the formula \eqref{E:ACOUSTICALMETRIC}),
		while the $\Fullset$-derivatives of $\upmu$ and the Cartesian components $\Radunit^a$
		and $\GeoAng_A^a$ can be estimated by analyzing
		solutions to transport equations
		(in the spirit of \eqref{E:SCHEMATICEVOLUTIONEQUATIONINVERSEFOLIATIONDENSITY}).
		Notice that this is in contrast to the commutators $[Z, \partial_{\alpha}]$;
		these commutators involve the $\Fullset-$derivatives of $\upmu^{-1}$, 
		which are not uniformly bounded up to the shock.

		It is because the commutators $[Z, \upmu \partial_{\alpha}]$ are good
		that we can successfully commute 
		($\upmu-$weighted versions of) the \emph{first-order} equations
		\eqref{E:RENORMALIZEDVORTICTITYTRANSPORTEQUATION}
		\eqref{E:FLATDIVOFRENORMALIZEDVORTICITY},
		and
		\eqref{E:EVOLUTIONEQUATIONFLATCURLRENORMALIZEDVORTICITY}
		to obtain control of the specific vorticity (see also Subsubsect.~\ref{sss.inho.vor})
		all the way up to the shock.
		More precisely, consistent with our above remarks,
		we view the scalar function $(\Flatcurl \Vortrenormalized)^i$ to be the unknown in equation
		\eqref{E:EVOLUTIONEQUATIONFLATCURLRENORMALIZEDVORTICITY}
		and hence we commute 
		\emph{only through the outer operator} $\upmu \Transport$
		in the $\upmu-$weighted version of equation
		\eqref{E:EVOLUTIONEQUATIONFLATCURLRENORMALIZEDVORTICITY}.

		We stress that the strategy described above applies
		\emph{only to commutations through first-order operators};
		although the vectorfields $Z \in \Fullset$ 
		also (by design) commute well through 
		$\upmu \square_g$
		(see \eqref{E:COMMUTEDWAVE} and the remarks below it),
		their commutator with a typical $\upmu-$weighted
		second-order differential operator,
		such as $\upmu \partial_{\alpha} \partial_{\beta}$,
		produces error terms of the schematic form 
		$(1/\upmu) \Fullset \Fullset$, which are large near the shock;
		such error terms would prevent us from deriving 
		non-degenerate estimates at the low derivative levels,
		which are essential for closing the problem.

\subsubsection{Inhomogeneous terms in the specific vorticity equation}\label{sss.inho.vor}
The equation for the specific vorticity \eqref{E:RENORMALIZEDVORTICTITYTRANSPORTEQUATION} has the inhomogeneous term $\Vortrenormalized^a\rd_a v^i$. 
In order to bound $\Vortrenormalized^i$ uniformly in $L^{\infty}$,
we need to control the integral of $\rd_a v^i$ along the integral curves of the transport operator $\Transport$. Even in the irrotational setting, the quantity $\rd_a v^i$ can blow up like $\upmu^{-1}$ near the shock,
as is suggested by the schematic\footnote{See \eqref{E:CARTESIANINTERMSOFGEOMETRIC} for the precise formula for $\partial_a$ in terms of the geometric vectorfields.} 
relation $\partial_a v^i \sim \upmu^{-1} \Rad v^i + \cdots$.
Nevertheless, by exploiting the \emph{transversality}\footnote{The transversality of $\Transport$ is critically important 
for this argument. For example if one integrates the same quantity $\rd_a v^i$ along the integral curve of the null vectorfield $\Lunit$
(which is \emph{tangent} to the acoustic characteristics),
then one can at best show that the integral is bounded by $\ln \upmu^{-1}$.} 
of the integral curves of $\Transport$ 
with the acoustic characteristics,
it can be shown\footnote{In proving this, one relies on the fact that 
the differential operator $\upmu \Transport$
can be viewed as
$
\displaystyle
\frac{d}{du}
$
along the integral curves of $\upmu \Transport$.} 
that the integral in question is uniformly bounded up to the shock, independent of $\upmu$. 
As a consequence, one can show that unlike an arbitrary Cartesian coordinate partial derivative of $v^i$,
$\Vortrenormalized^i$ remains uniformly bounded up to the shock. Moreover, as we discussed in the previous subsection, 
since $\upmu \Transport$ enjoys good commutation properties with the geometric vectorfields, 
the lower-order derivatives of $\Vortrenormalized^i$ with respect to the geometric vectorfields 
are also uniformly bounded in $L^{\infty}$.

In contrast, one cannot hope to obtain uniform $L^{\infty}$ bounds for
a general Cartesian coordinate partial derivatives of $\Vortrenormalized^i$ up to the shock.
This can easily be inferred from the specific vorticity equation \eqref{E:RENORMALIZEDVORTICTITYTRANSPORTEQUATION}, 
which states that 
$\Transport \Vortrenormalized^i$ is equal to 
$\Vortrenormalized^a \rd_a v^i$,
and we have already noted that $\rd_a v^i$ can blow up like $\upmu^{-1}$ near the shock.
Nevertheless, one can use equation \eqref{E:EVOLUTIONEQUATIONFLATCURLRENORMALIZEDVORTICITY} and an argument similar
to the one given in the previous paragraph to show that $\Flatcurl \Vortrenormalized$ is uniformly  
bounded in $L^{\infty}$ up to the shock!
That is, $\Flatcurl \Vortrenormalized$  behaves better than $\partial_{\alpha} \Vortrenormalized$! 
This argument crucially relies on the good null structure of the term $\mathscr{P}_{(\Vortrenormalized)}^i$
revealed by Theorem~\ref{T:STRONGNULL}, 
which in particular shows that the potentially damaging product
$(\Radunit \Vortrenormalized) \Radunit v$
(which is expected to be of size $\upmu^{-2}$)
is not present if one decomposes
RHS~\eqref{E:EVOLUTIONEQUATIONFLATCURLRENORMALIZEDVORTICITY}
relative to the geometric vectorfields.
Moreover, the geometric vectorfield derivatives of
$\Flatcurl \Vortrenormalized$
obey similar good $L^{\infty}$ and energy estimates,
except for at the top order (as we describe below).
These facts are extremely helpful in our approach, since, as we will later discuss, 
we need to combine these good below-top-order estimates for $\Vortrenormalized$
with appropriate top order estimate for $\Flatdiv \Vortrenormalized$ and $\Flatcurl \Vortrenormalized$
in order to obtain, via elliptic estimates,
control over the top order geometric derivatives of $\underline{\partial} \Vortrenormalized$,
where $\underline{\partial}$ denotes the spatial gradient with respect to the 
Cartesian coordinates.
Notice also that these estimates are relevant for controlling solutions to the wave equation
since $\Flatcurl \Vortrenormalized$ appears as a source term on RHS~\eqref{E:VELOCITYWAVEEQUATION}.
In fact,$\Flatcurl \Vortrenormalized$  
is the only top-order specific vorticity term featured in the wave equations.

\subsubsection{Easy error terms in the wave equation}
\label{SSS:EASYWAVEEQUATIONERRORTERMS}
	Most of the error terms on RHSs~\eqref{E:VELOCITYWAVEEQUATION}-\eqref{E:RENORMALIZEDDENSITYWAVEEQUATION}
		are easy to treat.
		In particular, 
		Christodoulou's framework \cite{dC2007} can easily be extended to treat
		the error terms on RHS \eqref{E:VELOCITYWAVEEQUATION}-\eqref{E:RENORMALIZEDDENSITYWAVEEQUATION}
		that verify the strong null condition;
		as we outlined in Subsubsect.~\ref{SSS:SOMEREMARKSONTHEPROOF}
		(see also the discussion in \cites{jS2014b,gHsKjSwW2016}),
		in the solution regime under consideration, 
		such error terms are essentially harmless and
		do not interfere with the shock formation processes.
		This means that the error terms 
		described by Theorem~\ref{T:STRONGNULL}
		are expected to have only a negligible effect on the dynamics,
		even near the shock. 

		The term
		$2 \exp(\Densrenormalized) \epsilon_{iab} (\Transport v^a) \Vortrenormalized^b$
		on RHS~\eqref{E:VELOCITYWAVEEQUATION} is also relatively easy to treat.
		The reason is that $\Vortrenormalized$ is a below-top-order factor
		that can be bounded by commuting
		the transport equation \eqref{E:RENORMALIZEDVORTICTITYTRANSPORTEQUATION}
		with the geometric vectorfields;
		as we mentioned at the end of
		Subsubsect.~\ref{SSS:QUICKSUMMARYOFPROOFOFSHOCKFORMATION},
		below-top-order terms exhibit less degenerate behavior 
		with respect to $\upmu$ compared to top-order factors.

	\subsubsection{The difficult specific vorticity term in the wave equation}
		\label{SSS:DIFFICULTVORTICITYTERMS}
		In contrast to the terms described in Subsubsect.~\ref{SSS:EASYWAVEEQUATIONERRORTERMS} 
		the factor $(\Flatcurl \Vortrenormalized)^i$
		on RHS~\eqref{E:VELOCITYWAVEEQUATION} is  
		a challenging top order factor that
		needs to be treated with the elliptic estimates mentioned in Subsubsect.~\ref{sss.inho.vor}.
		To explain why this is the case, let us count derivatives.
		Using only equation \eqref{E:RENORMALIZEDVORTICTITYTRANSPORTEQUATION},
		one can only conclude that $\Vortrenormalized$ has the same regularity
		as $\partial v$ (because of the factor $\partial v$ on RHS~\eqref{E:RENORMALIZEDVORTICTITYTRANSPORTEQUATION}),
		which suggests that $\Flatcurl \Vortrenormalized$ has the same regularity as
		$\partial \partial v$. The key point is that \emph{this regularity is not compatible with the factor
		$(\Flatcurl \Vortrenormalized)^i$ being 
		on the right-hand side of the wave equation \eqref{E:VELOCITYWAVEEQUATION} for $v^i$};
		the wave equation energy estimates for $v^i$ yield only that 
		$\partial v$ has the same regularity as $\Flatcurl \Vortrenormalized$.
		That is, this approach leads to the loss of a derivative.

\subsubsection{Elliptic estimates for the specific vorticity near the shock}
		\label{SSS:ELLIPTICESTIMATESFORVORTICITY}
		In this subsubsection, we will sketch the main ideas of
		how to bound the top-order derivatives of $\Flatcurl \Vortrenormalized$
		and thus overcome the loss of a derivative mentioned in the previous subsubsection.
		At the same time, we will discuss how to bound the blowup-rate
		of the $L^2$ norm of these top derivatives; understanding the
		blowup-rates lies at the heart of understanding how to close 
		the energy estimates for the full system of equations.

		It turns out that we cannot control 
		the top-order derivatives of
		$\Flatcurl \Vortrenormalized$ in isolation; 
		due to the $\underline{\partial} \Vortrenormalized$-dependent terms
		on RHS~\eqref{E:VORTICITYNULLFORM},
		we can control them only by obtaining control of the top-order derivatives of
		$\underline{\partial} \Vortrenormalized$,
		which requires elliptic estimates on $\Sigma_t$;
		we recall that here and throughout, 
		$\underline{\partial}$ denotes the spatial gradient with respect to the 
		Cartesian coordinates.
		To proceed, we let $N_{Top}$ denote the maximum number\footnote{At the end of this subsubsection,
		we will explain why $N_{Top}$ is the same as in the irrotational case.} 
		of times that we need to commute the wave equations 
		\eqref{E:VELOCITYWAVEEQUATION}-\eqref{E:RENORMALIZEDDENSITYWAVEEQUATION}
		to close the estimates
		and we let $\Fullset^{N_{Top}}$
		denote an arbitrary $N_{Top}^{th}-$order differential operator corresponding to
		repeated differentiation with respect to the elements\footnote{Recall that in 
		Subsubsect.~\ref{SSS:QUICKSUMMARYOFPROOFOFSHOCKFORMATION},
		when deriving energy estimates, 
		we were able to close the estimates by commuting the equations only with the elements $P$ of the $\mathcal{P}_u-$tangential
		set $\Tanset$. Here, to keep the discussion simple, we ignore this detail and allow for commutations
		with all elements of $\Fullset$.} 
		of $\Fullset$.
		Then to close the top-order wave equation energy estimates,
		we must control the wave equation source term 
		$
		\Flatcurl \Fullset^{N_{Top}} \Vortrenormalized
		$
		in $L^2$.
		Examining the right-hand side of the evolution equation\footnote{In reality, to close the estimates,
		one must study the $\upmu-$weighted version of this equation,
		but we ignore this detail here.}
		\eqref{E:EVOLUTIONEQUATIONFLATCURLRENORMALIZEDVORTICITY}
		for the scalar function $\Flatcurl \Vortrenormalized^i$,
		we see that when deriving $L^2$ estimates for $\Flatcurl \Fullset^{N_{Top}} \Vortrenormalized^i$,
		we must bound error terms depending on the $L^2$ norms of
		$
		\lbrace \underline{\partial} \Fullset^{N_{Top}} \Vortrenormalized^j \rbrace_{j=1,2,3}
		$.
		Hence, to close the estimates, we must use 
		equation \eqref{E:FLATDIVOFRENORMALIZEDVORTICITY}
		to obtain bounds for 
		$\Flatdiv \Fullset^{N_{Top}} \Vortrenormalized$
		and then employ elliptic estimates
		to bound the $L^2$ norms of
		$
		\lbrace \underline{\partial} \Fullset^{N_{Top}} \Vortrenormalized^j \rbrace_{j=1,2,3}
		$.

			The elliptic estimates that we need
			are similar to the standard Cartesian elliptic estimates along the constant-time hypersurfaces $\Sigma_t$
			and can be caricatured as follows:
			\begin{align} \label{E:ELLIPTICCARICATURE}
			\displaystyle
			\left\| 
				\sqrt{\upmu} \underline{\partial} \Fullset^{N_{Top}} \Vortrenormalized
			\right\|_{L^2(\Sigma_t)}
			\lesssim
			\left\| 
				\sqrt{\upmu} \Flatdiv \Fullset^{N_{Top}} \Vortrenormalized
			\right\|_{L^2(\Sigma_t)}
			+
			\left\| 
				\sqrt{\upmu} \Flatcurl \Fullset^{N_{Top}} \Vortrenormalized
			\right\|_{L^2(\Sigma_t)}
			+ \cdots.
			\end{align}
			Above, the $L^2$ norms are defined relative to a measure that,
			roughly speaking, is equal to 
			$du d \vartheta^1 d \vartheta^2$
			where $u$, $\vartheta^1$, and $\vartheta^2$ are the geometric coordinates on $\Sigma_t$;
			see also Footnote~\ref{FN:NOTWRITINGFORMS}.
			In view of the relation\footnote{The estimate \eqref{E:VOLFORMCOMPARISON}
			can be derived by analyzing the change of variables map between geometric
			and rectangular coordinates.
			That is, \eqref{E:VOLFORMCOMPARISON} follows from proving
			precise versions of the following heuristic statements,
			which are suggested by Figure~\ref{F:FRAME}
			(which corresponds to the nearly plane symmetric solutions under consideration):
			at a fixed $t$, we have
			$dx^1 \sim - \upmu du$,
			$d x^2 \sim d \vartheta^1$,
			$d x^3 \sim d \vartheta^2$.}
			\begin{align} \label{E:VOLFORMCOMPARISON}
				d^3 x \approx \upmu \, du d \vartheta^1 d \vartheta^2,
			\end{align}
			where $d^3 x$ is the standard Euclidean volume form on $\Sigma_t$,
			it follows that indeed, \eqref{E:ELLIPTICCARICATURE}
			is essentially equivalent to\footnote{In fact, it is perhaps preferable
			to derive the estimate
			\eqref{E:ELLIPTICCARICATURE} relative to the Cartesian spatial coordinates and volume
			form $d^3 x$.} 
			the standard Cartesian elliptic estimates along the constant-time hypersurfaces $\Sigma_t$.
			The need for elliptic estimates along $\Sigma_t$
			represents a new difficulty not found in\footnote{As we have mentioned, this difficulty is also absent in 
			the case of two spatial dimensions, even in the presence of vorticity. \label{FN:TWOSPACENOELLITPIC}} 
			previous works on 
			shock formation in three spatial dimensions.
			Most importantly, while the elliptic estimates 
			are crucial from the point of view of regularity, 
			they are based on treating \emph{all} spatial derivatives of $\Vortrenormalized$ 
			on equal footing. This clearly clashes with the philosophy of
			trying to obtain less singular estimates for the derivatives of $\Vortrenormalized$
			in directions tangent to the acoustic characteristics
			and calls into question the usefulness of the
			good null structure in the equations.
			The net effect is that top-order estimates for $\Vortrenormalized$
			are burdened with a factor of $1/\upmu$,
			which leads to degenerate top-order bounds for $\Vortrenormalized$.
			However, by exploiting the non-degenerate nature of
			an energy for
			$\Flatcurl \Fullset^{N_{Top}} \Vortrenormalized$
			along the acoustic characteristics,
			we are able to show that the 
			degeneracy created by this factor is not too severe;
			see the discussion below inequality \eqref{E:ATLEASTASDEGENERATE}
			concerning the small constant $c$.
			In fact, we are able to show that the degeneracy 
			corresponding to the elliptic estimates for $\Vortrenormalized$
			is much less severe than the analogous difficulty 
			that we encountered in the energy estimate \eqref{E:CARICENERGYINEQ}
			in the irrotational case,
			where the difficult factor of 
			$1/\upmu$ is tied to the difficult top-order regularity
			properties of the eikonal function.

			The basic strategy for controlling
			$\Flatcurl \Fullset^{N_{Top}} \Vortrenormalized$
			is to integrate the $\Fullset^{N_{Top}}-$commuted evolution equation
			\eqref{E:EVOLUTIONEQUATIONFLATCURLRENORMALIZEDVORTICITY}
			to bound, via a Gronwall estimate,
			$\Flatcurl \Fullset^{N_{Top}} \Vortrenormalized$
			in terms of simple error terms 
			and more difficult error terms that, by the elliptic estimates \eqref{E:ELLIPTICCARICATURE}
			and the $\Fullset^{N_{Top}}-$commuted equation \eqref{E:FLATDIVOFRENORMALIZEDVORTICITY}
			for\footnote{Unlike the estimates for $\Flatcurl \Fullset^{N_{Top}} \Vortrenormalized$,
			the estimates for $\Flatdiv \Fullset^{N_{Top}} \Vortrenormalized$ are easy to derive.} 
			$\Flatdiv \Fullset^{N_{Top}} \Vortrenormalized$,
			can be controlled back in terms of
			$\Flatcurl \Fullset^{N_{Top}} \Vortrenormalized$.
			As we mentioned above, a difficult factor of $1/\upmu$ 
			(more precisely,  $1/\upmu_{\star}$)
			is present in the inequality for $\Flatcurl \Fullset^{N_{Top}} \Vortrenormalized$
			that one needs to treat with Gronwall's inequality.
			In total, the integral inequality satisfied by $\Flatcurl \Fullset^{N_{Top}} \Vortrenormalized$ 
			can be caricatured as follows:
			\begin{align} \label{E:TOPORDERCURLCARICATURE}
			\left\| 
				\sqrt{\upmu} \Flatcurl \Fullset^{N_{Top}} \Vortrenormalized
			\right\|_{L^2(\Sigma_t)}^2
			& \leq
			\mbox{\upshape data}
			+
				\bar{A}
				\int_{s=0}^t
					\frac{1}{\upmu_{\star}(s)}
					\left\| 
						\sqrt{\upmu} \underline{\partial} \Fullset^{N_{Top}} \Vortrenormalized
					\right\|_{L^2(\Sigma_s)}^2
				\, ds
				+ \cdots
					\\
		& \leq
			\mbox{\upshape data}
			+
			\widetilde{A}
			\int_{s=0}^t
				\frac{1}{\upmu_{\star}(s)}
				\left\| 
					\sqrt{\upmu} \Flatcurl \Fullset^{N_{Top}} \Vortrenormalized
				\right\|_{L^2(\Sigma_s)}^2
			\, ds
			+ \cdots,
			\notag
		\end{align}
		where (as before)
		\[
		\upmu_{\star}(s) = \min_{\Sigma_s} \upmu,
		\]
		$\cdots$ denotes easier error terms, 
		and $\widetilde{A} > 0$ is a \emph{small}\footnote{In deriving \eqref{E:TOPORDERCURLCARICATURE},
		one can apply, to the main error integral involving the top order derivatives of $\Vortrenormalized$,
		Young's inequality in the form $ab \lesssim a^2 + b^2$
		to ensure that the constant $\widetilde{A}$ is small.
		This procedure also generates an error integral with a large constant, 
		but it can be suitably controlled with
		a non-degenerate energy for $\Flatcurl \Fullset^{N_{Top}} \Vortrenormalized$
		on the null hypersurface
		$\mathcal{P}_u^t$,
		which we have suppressed from
		the inequality \eqref{E:TOPORDERCURLCARICATURE}.} 
		constant.
		We now note that for reasons 
		similar to the ones given below inequality
		\eqref{E:CARICENERGYINEQ},
		the factor
		$
		\displaystyle
		\frac{1}{\upmu_{\star}}
		$
		on RHS~\eqref{E:TOPORDERCURLCARICATURE}
		leads to a Gronwall estimate for
		$
		\displaystyle
			\left\| 
				\sqrt{\upmu} \Flatcurl \Fullset^{N_{Top}} \Vortrenormalized
			\right\|_{L^2(\Sigma_t)}^2
	$
	that is \emph{at least as degenerate} as
	\begin{align} \label{E:ATLEASTASDEGENERATE}
			\left\| 
				\sqrt{\upmu} \Flatcurl \Fullset^{N_{Top}} \Vortrenormalized
			\right\|_{L^2(\Sigma_t)}^2
			\lesssim
			\mbox{\upshape Data}
			\cdot
			\upmu_{\star}^{- c}(t)
			+ \cdots,
	\end{align}
	where $c > 0$ is a small constant that is controlled by 
	the small constant $\widetilde{A}$.
	Since $c$ is small,
	this shows that the need to carry out elliptic estimates
	for the specific vorticity at the top order does not, in itself, 
	lead to drastically degenerate top-order
	estimates for $\Vortrenormalized$. In fact, the most degenerate
	terms are hiding in the terms $\cdots$ on RHS~\eqref{E:ATLEASTASDEGENERATE}.
	More precisely, included in $\cdots$
	are terms depending on the top-order derivatives
	of the wave variables $v^i$ and $\Densrenormalized$,
	whose energies can blow up like a large power of $\upmu_{\star}^{-1}$
	for the same reason as in the irrotational case.
	These terms in fact make the main contribution to the 
	top-order energy blowup-rate for $\Vortrenormalized$.
	A closely related fact is that the energy blowup-rates 
	for $\Vortrenormalized$ are compatible
	with the \emph{same} energy blowup-rates for the  
	wave variables that one derives
	in the irrotational case.
	Indeed, the 
	high-order energy blowup-rates 
	of the wave variables 
	are not affected by the presence of small amounts of
	vorticity in the problem.

		We also note that to close these estimates, 
		we must use the fact that $\Flatcurl \Vortrenormalized$ 
		satisfies, at all lower-order derivatives,
		better estimates than a general spatial derivative of $\Vortrenormalized$,
		as we described in Subsubsect.~\ref{SSS:DIFFICULTVORTICITYTERMS}.
		This fact is needed to control various lower-order error terms on
		RHS~\eqref{E:TOPORDERCURLCARICATURE}, which we have relegated to the terms $\cdots$.
		For this purpose, it is crucial that the term $\mathscr{P}_{(\Vortrenormalized)}^i$
		on the right-hand side of \eqref{E:EVOLUTIONEQUATIONFLATCURLRENORMALIZEDVORTICITY} 
		verifies the strong null condition (see Theorem~\ref{T:STRONGNULL}).

	Finally, we note that below the top derivative level, 
	we can avoid the elliptic estimates for $\Vortrenormalized$.
	The price one pays is that 
	the error terms in the energy estimates for
	$\Vortrenormalized$ depend on the derivatives of
	$\Densrenormalized$ and $v^i$ at one higher derivative level,
	which is permissible below top order.
	The gain is that we can prove less singular estimates (in terms of powers of $1/\upmu_{\star}$)
	below top order. That is, avoiding elliptic estimates allows us to employ
	an energy ``descent scheme'' similar to the one we discussed 
	in Subsubsect.~\ref{SSS:QUICKSUMMARYOFPROOFOFSHOCKFORMATION},
	which applies in particular to irrotational solutions of the compressible Euler equations.
	Eventually, one reaches a level below which all energies, 
	including those for $\Densrenormalized$, $v^i$, and $\Vortrenormalized$
	are bounded, much like in the irrotational case.
	This is the key technical step in closing the proof of shock formation.

\section{Proof of Theorem~\ref{T:GEOMETRICWAVETRANSPORTSYSTEM}}
\label{E:PROOFOFMAINTHEOREM}
In this section, we prove Theorem~\ref{T:GEOMETRICWAVETRANSPORTSYSTEM}.
The theorem is a conglomeration of
Lemmas~\ref{L:RENORMALIZEDVORTICITYEVOLUTIONEQUATION},
\ref{L:WAVEEQUATIONFORLOGDENSITY},
\ref{L:WAVEEQUATIONFORV},
and
\ref{L:DIVANDCURLEQUATIONS},
in which we separately derive the equations stated in the theorem.

\subsection{Proof of Theorem~\ref{T:GEOMETRICWAVETRANSPORTSYSTEM}}
\label{SS:PROOFOFMAINTHEOREM}
We start by deriving the well-known evolution equation 
\eqref{E:RENORMALIZEDVORTICTITYTRANSPORTEQUATION} for 
$\Vortrenormalized$.

\begin{lemma}[\textbf{Transport equation for} $\Vortrenormalized$]
\label{L:RENORMALIZEDVORTICITYEVOLUTIONEQUATION}
The compressible Euler equations 
\eqref{E:TRANSPORTDENSRENORMALIZEDRELATIVETORECTANGULAR}-\eqref{E:TRANSPORTVELOCITYRELATIVETORECTANGULAR}
imply the following evolution equation for the modified
vorticity vectorfield $\Vortrenormalized$ from Def.~\ref{D:MODIFIEDVARIABLES}:
\begin{align} \label{E:RESTATEDRENORMALIZEDVORTICITYEVOLUTIONEQUATION}
	\Transport \Vortrenormalized^i
	& = \Vortrenormalized^a \partial_a v^i.
\end{align}
\end{lemma}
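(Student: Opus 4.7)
The plan is to derive the classical vortex-stretching equation for the ordinary vorticity $\omega$ first, and then convert it into an equation for $\Vortrenormalized = \omega \exp(-\Densrenormalized)$ using the continuity equation \eqref{E:TRANSPORTDENSRENORMALIZEDRELATIVETORECTANGULAR}. The whole argument is a direct calculation with Cartesian partial derivatives, so no geometric machinery from the earlier sections is needed.

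Concretely, first I would apply $\Flatcurl$ to the velocity equation \eqref{E:TRANSPORTVELOCITYRELATIVETORECTANGULAR}. The right-hand side is $-\Speed^2 \partial_i \Densrenormalized$; taking $\epsilon_{iab}\partial_a$ of this and recalling that $\Speed = \Speed(\Densrenormalized)$, the contribution $\Speed^2 \epsilon_{iab}\partial_a\partial_b \Densrenormalized$ vanishes by antisymmetry, and the remaining term $2\Speed \Speed' \epsilon_{iab}(\partial_a \Densrenormalized)(\partial_b \Densrenormalized)$ also vanishes by antisymmetry. Hence $(\Flatcurl(\Transport v))^i = 0$. On the left, a straightforward computation using $\Transport = \partial_t + v^a\partial_a$ together with the Cartesian identity
\[
\epsilon_{iab}(\partial_a v^c)(\partial_c v^b) = (\Flatdiv v)\, \omega^i - \omega^a \partial_a v^i
\]
yields the classical vorticity equation $\Transport \omega^i = \omega^a \partial_a v^i - \omega^i \Flatdiv v$. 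The identity above is the only routine calculation; it can be verified by expanding $\partial_a v^c$ into its symmetric and antisymmetric parts and recognizing that the antisymmetric part is $\tfrac{1}{2}\epsilon_{acd}\omega^d$.

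Next, since $\Vortrenormalized^i = \exp(-\Densrenormalized)\,\omega^i$, the Leibniz rule applied to $\Transport$ gives
\[
\Transport \Vortrenormalized^i = \exp(-\Densrenormalized)\,\Transport \omega^i \;-\; \Vortrenormalized^i\, \Transport \Densrenormalized.
\]
Inserting the vorticity equation derived above and using \eqref{E:TRANSPORTDENSRENORMALIZEDRELATIVETORECTANGULAR} in the form $\Transport \Densrenormalized = -\Flatdiv v$, the two $\Flatdiv v$ terms cancel exactly, leaving precisely $\Transport \Vortrenormalized^i = \Vortrenormalized^a \partial_a v^i$, which is \eqref{E:RESTATEDRENORMALIZEDVORTICITYEVOLUTIONEQUATION}.

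There is no serious obstacle in this proof; the only point requiring care is the algebraic identity for $\epsilon_{iab}(\partial_a v^c)(\partial_c v^b)$, which is the key cancellation producing the vortex-stretching and $-\omega^i \Flatdiv v$ terms. The passage from $\omega$ to $\Vortrenormalized$ is precisely designed so that this latter term is absorbed into the density renormalization, which is the whole point of working with the specific vorticity rather than $\omega$ itself.
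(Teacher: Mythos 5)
Your proof is correct and follows essentially the same route as the paper: both apply $\Flatcurl$ to \eqref{E:TRANSPORTVELOCITYRELATIVETORECTANGULAR} (whose right-hand side is annihilated since it is a perfect gradient), exploit the same $\epsilon$--$\delta$ identity to rewrite $\epsilon_{iab}(\partial_a v^c)\partial_c v^b$ in terms of the vorticity-stretching and $\omega^i \Flatdiv v$ terms, and use $\Transport \Densrenormalized = -\Flatdiv v$ to cancel the divergence term. The only difference is organizational: the paper commutes with $\exp(-\Densrenormalized)\Flatcurl$ in a single step and works with $\Vortrenormalized$ throughout, whereas you first derive the classical vorticity equation for $\omega$ and then renormalize, which is a harmless reordering of the same computation.
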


\begin{proof}
In view of definition \eqref{E:MODIFIEDVARIABLES},
we commute equation \eqref{E:TRANSPORTVELOCITYRELATIVETORECTANGULAR}
with the operator 
$
\displaystyle
\frac{1}
{\exp \Densrenormalized}
\Flatcurl
$,
note that $\Flatcurl$ completely annihilates 
RHS~\eqref{E:TRANSPORTVELOCITYRELATIVETORECTANGULAR}
(which can be written as a perfect gradient $\partial_i (\cdots)$),
and use equation \eqref{E:TRANSPORTDENSRENORMALIZEDRELATIVETORECTANGULAR}
and the antisymmetry of $\epsilon_{\dots}$
to deduce
\begin{align} \label{E:FIRSTSTEPRENORMALIZEDVORTICITYEVOLUTIONEQUATION}
	\Transport \Vortrenormalized^i
	& = - \frac{1}{\exp \Densrenormalized} \epsilon_{iab} (\partial_a v^c) \partial_c v^b
	- \frac{1}{\exp \Densrenormalized} (\Transport \Densrenormalized) \omega^i
		\\
	& = - \frac{1}{\exp \Densrenormalized} \epsilon_{iab} (\partial_a v^c) \partial_c v^b
	+ (\Flatdiv v) \Vortrenormalized^i
	\notag \\
	& = - \frac{1}{\exp \Densrenormalized} 
	\epsilon_{iab} (\partial_a v^c) (\partial_c v^b - \partial_b v^c)
	+ (\Flatdiv v) \Vortrenormalized^i
	\notag \\
	& = - \epsilon_{iab} \epsilon_{cbd} \Vortrenormalized^d (\partial_a v^c) 
	+ (\Flatdiv v) \Vortrenormalized^i
	\notag \\
	& = \epsilon_{iab} \epsilon_{cdb} \Vortrenormalized^d (\partial_a v^c) 
	+ (\Flatdiv v) \Vortrenormalized^i
	\notag \\
	& = (\delta_{ic} \delta_{ad} - \delta_{id} \delta_{ac}) \Vortrenormalized^d (\partial_a v^c) 
	+ (\Flatdiv v) \Vortrenormalized^i.
	\notag 
\end{align}
Clearly, we have
$\mbox{{\upshape RHS}~\eqref{E:FIRSTSTEPRENORMALIZEDVORTICITYEVOLUTIONEQUATION}}
=
\mbox{{\upshape RHS}~\eqref{E:RESTATEDRENORMALIZEDVORTICITYEVOLUTIONEQUATION}}
$
as desired.
\end{proof}

Recall that the covariant wave operator $\square_g$
is defined in Def.~\ref{E:WAVEOPERATORARBITRARYCOORDINATES}.
In the next lemma, we provide an explicit expression for
$\square_g \phi$ that holds relative to the Cartesian coordinates.

\begin{lemma}[$\square_g$ \textbf{relative to the Cartesian coordinates}]
	\label{L:COVARIANTWAVEOPRELATIVETOCARTESIAN}
	The covariant wave operator $\square_g$ acts on scalar functions $\phi$
	via the following identity, where
	RHS~\eqref{E:COVARIANTWVAEOPERATORINRECTANGULARCOORDINATES}
	is expressed in Cartesian coordinates:
	\begin{align} \label{E:COVARIANTWVAEOPERATORINRECTANGULARCOORDINATES}
	\square_g \phi
	& = - \Transport \Transport \phi
		+ \Speed^2 \delta^{ab} \partial_a \partial_b \phi
		+ 2 \Speed^{-1} \Speed' (\Transport \Densrenormalized) \Transport \phi
		- (\partial_a v^a) \Transport \phi
		- \Speed^{-1} \Speed' (g^{-1})^{\alpha \beta} (\partial_{\alpha} \Densrenormalized) \partial_{\beta} \phi.
\end{align}
\end{lemma}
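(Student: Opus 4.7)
The statement is purely computational: it unpacks the covariant formula \eqref{E:WAVEOPERATORARBITRARYCOORDINATES} from Def.~\ref{D:COVWAVEOP} in Cartesian coordinates and then uses the Euler equations to simplify. My plan is to expand
\[
\square_g \phi
= (g^{-1})^{\alpha \beta} \partial_\alpha \partial_\beta \phi
+ \partial_\alpha (g^{-1})^{\alpha \beta} \partial_\beta \phi
+ \frac{\partial_\alpha \sqrt{|\det g|}}{\sqrt{|\det g|}} (g^{-1})^{\alpha \beta} \partial_\beta \phi,
\]
and then identify each of the three pieces.

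First I will compute $\sqrt{|\det g|}$. Writing $g$ in block form relative to the splitting $(t,x^1,x^2,x^3)$, with $g_{00}= -1+\Speed^{-2}|v|^2$, $g_{0i}=-\Speed^{-2} v^i$, and $g_{ij}=\Speed^{-2}\delta_{ij}$, a Schur complement calculation gives $\det g = -\Speed^{-6}$, so $\sqrt{|\det g|}=\Speed^{-3}$. Hence
\[
\frac{\partial_\alpha \sqrt{|\det g|}}{\sqrt{|\det g|}}
= -3 \Speed^{-1} \Speed' \partial_\alpha \Densrenormalized,
\]
which produces the last term on RHS~\eqref{E:COVARIANTWVAEOPERATORINRECTANGULARCOORDINATES} up to a factor of $3$; the factor of $3$ will eventually be reduced to $1$ by cancellations against the other terms.

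Next I will handle the principal-part contribution. Using the decomposition \eqref{E:INVERSEACOUSTICALMETRIC} and the identity $B^\alpha B^\beta \partial_\alpha\partial_\beta \phi = \Transport\Transport\phi - (B^\alpha \partial_\alpha B^\beta)\partial_\beta \phi$, combined with $B^\alpha \partial_\alpha B^0 = 0$ and $B^\alpha \partial_\alpha B^i = \Transport v^i = -\Speed^2 \partial_i \Densrenormalized$ (by \eqref{E:TRANSPORTVELOCITYRELATIVETORECTANGULAR}), I obtain
\[
(g^{-1})^{\alpha \beta}\partial_\alpha \partial_\beta \phi
= -\Transport\Transport \phi
+ \Speed^2 \delta^{ab}\partial_a \partial_b \phi
- \Speed^2 (\partial_i \Densrenormalized) \partial_i \phi.
\]
For the middle piece I compute $\partial_\alpha (g^{-1})^{\alpha 0} = -\partial_a v^a$ directly, and, for $\beta=i$,
\[
\partial_\alpha (g^{-1})^{\alpha i}
= -\Transport v^i - v^i \partial_a v^a + \partial_i(\Speed^2)
= \Speed^2 \partial_i \Densrenormalized - v^i \partial_a v^a + 2\Speed \Speed' \partial_i \Densrenormalized,
\]
again using \eqref{E:TRANSPORTVELOCITYRELATIVETORECTANGULAR}. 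Regrouping the $\partial_t\phi$ and $v^i \partial_i \phi$ contributions into $\Transport \phi$ gives
\[
\partial_\alpha (g^{-1})^{\alpha \beta} \partial_\beta \phi
= -(\partial_a v^a)\Transport \phi
+ \Speed^2 (\partial_i \Densrenormalized) \partial_i \phi
+ 2\Speed \Speed' (\partial_i \Densrenormalized) \partial_i \phi.
\]

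Finally I will add the three pieces. The $\pm\Speed^2(\partial_i\Densrenormalized)\partial_i\phi$ terms cancel, and the remaining $-3\Speed^{-1}\Speed'(g^{-1})^{\alpha\beta}\partial_\alpha\Densrenormalized\,\partial_\beta\phi$ combines with $+2\Speed\Speed' \delta^{ab}(\partial_a\Densrenormalized)\partial_b\phi$ by peeling off the spatial part of $(g^{-1})^{\alpha\beta}$: writing $(g^{-1})^{\alpha\beta}\partial_\alpha\Densrenormalized\,\partial_\beta\phi = -(\Transport\Densrenormalized)(\Transport\phi) + \Speed^2\delta^{ab}(\partial_a\Densrenormalized)\partial_b\phi$ turns $-3\Speed^{-1}\Speed'\,(\cdots) + 2\Speed\Speed'\delta^{ab}(\cdots)$ into $2\Speed^{-1}\Speed'(\Transport\Densrenormalized)\Transport\phi - \Speed^{-1}\Speed'(g^{-1})^{\alpha\beta}\partial_\alpha\Densrenormalized\,\partial_\beta\phi$, which is exactly the combination appearing on RHS~\eqref{E:COVARIANTWVAEOPERATORINRECTANGULARCOORDINATES}. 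The main ``obstacle'' here is purely bookkeeping: one must carefully track which $\Speed^2(\partial_i\Densrenormalized)$ comes from the principal part and which comes from $\partial_j(c_s^2\delta^{ij})$, since the final cancellation of the factor of $3$ in front of $\Speed^{-1}\Speed'$ (from $\partial_\alpha\ln\sqrt{|\det g|}$) down to the stated factor of $1$ is what makes the formula clean.
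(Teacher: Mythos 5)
Your computation is correct and produces the stated formula. The route you take — expanding $\square_g\phi = (g^{-1})^{\alpha\beta}\partial_\alpha\partial_\beta\phi + \partial_\alpha(g^{-1})^{\alpha\beta}\partial_\beta\phi + (\partial_\alpha\ln\sqrt{|\det g|})(g^{-1})^{\alpha\beta}\partial_\beta\phi$ and treating each piece separately — is a mild variant of what the paper does. The paper first records $\sqrt{|\det g|}\,g^{-1} = -\Speed^{-3}\Transport\otimes\Transport + \Speed^{-1}\sum_a\partial_a\otimes\partial_a$ and then takes the divergence of this object against $\partial\phi$, which makes $-\Transport\Transport\phi$ and $-(\partial_a v^a)\Transport\phi$ appear immediately from the $B$-block without ever splitting $\partial_\alpha(g^{-1})^{\alpha\beta}$ into $\beta=0$ and $\beta=i$ cases; this saves a little bookkeeping but the two computations are of comparable difficulty.

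There is, however, one logical flaw worth correcting. You invoke the Euler momentum equation \eqref{E:TRANSPORTVELOCITYRELATIVETORECTANGULAR} twice — substituting $\Transport v^i = -\Speed^2\partial_i\Densrenormalized$ both in the principal-part piece (via $B^\alpha\partial_\alpha B^i$) and in the first-order piece (via $\partial_\alpha(g^{-1})^{\alpha i}$). This is unnecessary: the two terms you rewrite are $+(\Transport v^i)\partial_i\phi$ and $-(\Transport v^i)\partial_i\phi$ respectively, and they cancel identically before any substitution is made. Lemma~\ref{L:COVARIANTWAVEOPRELATIVETOCARTESIAN} is a purely algebraic/geometric identity about the acoustical metric, valid for arbitrary $(\Densrenormalized, v)$, and the paper's proof correctly makes no reference to the Euler equations. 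As written, your argument appears to prove only the weaker statement that the identity holds for solutions, which would muddy the logical structure of the paper: the subsequent lemmas (\ref{L:WAVEEQUATIONFORLOGDENSITY}, \ref{L:WAVEEQUATIONFORV}) apply this unconditional identity and only then invoke the Euler equations. You should simply carry the $\pm(\Transport v^i)\partial_i\phi$ terms and observe their cancellation, leaving the Euler equations out of this lemma entirely.
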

\begin{proof}
It is straightforward to compute using equations 
\eqref{E:ACOUSTICALMETRIC}-\eqref{E:INVERSEACOUSTICALMETRIC}
that relative to Cartesian coordinates, we have
\begin{align} \label{E:DETG}
	\mbox{\upshape det} g
	& = - \Speed^{-6}
\end{align}
and hence
\begin{align} \label{E:ROOTDETGTIMESGINVERSE}
	\sqrt{|\mbox{\upshape det} g|} g^{-1} 
	& = 
		- \Speed^{-3} \Transport \otimes \Transport
		+
		\Speed^{-1} \sum_{a=1}^3 \partial_a \otimes \partial_a.
\end{align}
Using \eqref{E:WAVEOPERATORARBITRARYCOORDINATES},
\eqref{E:DETG}, and
\eqref{E:ROOTDETGTIMESGINVERSE}, 
we compute that
\begin{align} \label{E:FIRSTFORMULACOVARIANTWVAEOPERATORINRECTANGULARCOORDINATES}
	\square_g \phi
	 & = 
	- 
	\Speed^3
	\left(
		\Transport^{\alpha} \partial_{\alpha} (\Speed^{-3})
	\right)
	\Transport^{\beta} \partial_{\beta} \phi
	- 
	(\partial_{\alpha} \Transport^{\alpha}) \Transport^{\beta} \partial_{\beta} \phi
	- 
	(\Transport^{\alpha} \partial_{\alpha} \Transport^{\beta}) \partial_{\beta} \phi
		\\
&  \ \
	-
	\Transport^{\alpha} \Transport^{\beta} \partial_{\alpha} \partial_{\beta}
	\phi
	+ 
	\Speed^2 \delta^{ab} \partial_a \partial_b \phi
	- 
	\Speed \Speed' \delta^{ab} (\partial_a \Densrenormalized) \partial_b \phi.
	\notag
\end{align}
Finally, from \eqref{E:FIRSTFORMULACOVARIANTWVAEOPERATORINRECTANGULARCOORDINATES},
the expression \eqref{E:MATERIALVECTORVIELDRELATIVETORECTANGULAR} for $\Transport$, 
the expression \eqref{E:INVERSEACOUSTICALMETRIC} for $g^{-1}$,
and simple calculations, we arrive at \eqref{E:COVARIANTWVAEOPERATORINRECTANGULARCOORDINATES}.
\end{proof}

In the next lemma, we derive equation \eqref{E:RENORMALIZEDDENSITYWAVEEQUATION}.

\begin{lemma}[\textbf{Wave equation for} $\Densrenormalized$]
\label{L:WAVEEQUATIONFORLOGDENSITY}
The compressible Euler equations 
\eqref{E:TRANSPORTDENSRENORMALIZEDRELATIVETORECTANGULAR}-\eqref{E:TRANSPORTVELOCITYRELATIVETORECTANGULAR}
imply the following covariant wave equation for the
logarithmic density variable $\Densrenormalized$
from Def.~\ref{D:MODIFIEDVARIABLES}:
\begin{align} \label{E:PROOFRENORMALIZEDDENSITYWAVEEQUATION}
\square_g \Densrenormalized 
& = 
		- 
		3 \Speed^{-1} \Speed' 
		(g^{-1})^{\alpha \beta} \partial_{\alpha} \Densrenormalized \partial_{\beta} \Densrenormalized
		+ 
		2 \sum_{1 \leq a < b \leq 3}
			\left\lbrace
				\partial_a v^a \partial_b v^b
					-
				\partial_a v^b \partial_b v^a
			\right\rbrace.
\end{align}
\end{lemma}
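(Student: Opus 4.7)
The plan is to apply the formula \eqref{E:COVARIANTWVAEOPERATORINRECTANGULARCOORDINATES} from Lemma~\ref{L:COVARIANTWAVEOPRELATIVETOCARTESIAN} with $\phi = \Densrenormalized$ and then eliminate, via the transport equations \eqref{E:TRANSPORTDENSRENORMALIZEDRELATIVETORECTANGULAR}--\eqref{E:TRANSPORTVELOCITYRELATIVETORECTANGULAR}, all terms that are not of the form appearing on RHS~\eqref{E:PROOFRENORMALIZEDDENSITYWAVEEQUATION}. Concretely, \eqref{E:COVARIANTWVAEOPERATORINRECTANGULARCOORDINATES} produces five summands. The terms involving $\Transport \Densrenormalized$ can be immediately rewritten using \eqref{E:TRANSPORTDENSRENORMALIZEDRELATIVETORECTANGULAR} as $\Transport \Densrenormalized = -\partial_a v^a$. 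The only non-trivial summand is $-\Transport\Transport\Densrenormalized$, which I would expand as $\Transport(\partial_a v^a)$ and then commute:
\[
\Transport(\partial_a v^a) = \partial_a(\Transport v^a) - [\partial_a,\Transport]v^a = \partial_a(\Transport v^a) - (\partial_a v^c)\partial_c v^a.
\]
Using \eqref{E:TRANSPORTVELOCITYRELATIVETORECTANGULAR} to substitute $\Transport v^a = -\Speed^2 \delta^{ab}\partial_b\Densrenormalized$, the Leibniz rule yields
\[
\partial_a(\Transport v^a) = -\Speed^2 \delta^{ab}\partial_a\partial_b \Densrenormalized - 2\Speed\Speed' \delta^{ab}(\partial_a\Densrenormalized)\partial_b\Densrenormalized.
\]

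The second-order term $-\Speed^2 \delta^{ab}\partial_a\partial_b \Densrenormalized$ produced here exactly cancels the $+\Speed^2 \delta^{ab}\partial_a\partial_b\Densrenormalized$ contribution already present in \eqref{E:COVARIANTWVAEOPERATORINRECTANGULARCOORDINATES}; this cancellation is the only miracle in the computation and is ultimately the reason $\Densrenormalized$ satisfies a genuine covariant wave equation. What remains are three groups of derivative-quadratic terms: those proportional to $\Speed\Speed'\delta^{ab}\partial_a\Densrenormalized \partial_b\Densrenormalized$, those proportional to $\Speed^{-1}\Speed'(\partial_a v^a)^2$, and the velocity-quadratic terms $(\partial_a v^a)^2 - (\partial_a v^c)(\partial_c v^a)$.

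To match RHS~\eqref{E:PROOFRENORMALIZEDDENSITYWAVEEQUATION}, I would re-express the $\Speed^{-1}\Speed'$-coefficient terms using the null-frame-style identity
\[
(g^{-1})^{\alpha\beta}\partial_\alpha \Densrenormalized \, \partial_\beta \Densrenormalized = -(\Transport\Densrenormalized)^2 + \Speed^2 \delta^{ab}\partial_a\Densrenormalized \, \partial_b\Densrenormalized = -(\partial_a v^a)^2 + \Speed^2 \delta^{ab}\partial_a\Densrenormalized \, \partial_b\Densrenormalized,
\]
which is immediate from \eqref{E:INVERSEACOUSTICALMETRIC} and \eqref{E:TRANSPORTDENSRENORMALIZEDRELATIVETORECTANGULAR}. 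Solving for $\Speed^{-1}\Speed'(\partial_a v^a)^2$ in terms of $\Speed^{-1}\Speed'(g^{-1})^{\alpha\beta}\partial_\alpha\Densrenormalized\,\partial_\beta\Densrenormalized$ and $\Speed\Speed'\delta^{ab}\partial_a\Densrenormalized\,\partial_b\Densrenormalized$ and combining with the surviving $\Speed\Speed'\delta^{ab}\partial_a\Densrenormalized\,\partial_b\Densrenormalized$ coefficients should regroup everything into the single coefficient $-3\Speed^{-1}\Speed'(g^{-1})^{\alpha\beta}\partial_\alpha\Densrenormalized\,\partial_\beta\Densrenormalized$ on RHS~\eqref{E:PROOFRENORMALIZEDDENSITYWAVEEQUATION}.

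Finally, for the velocity-quadratic part I would use the elementary identity
\[
(\partial_a v^a)^2 - (\partial_a v^c)(\partial_c v^a) = \sum_{a,b=1}^3 \bigl(\partial_a v^a \partial_b v^b - \partial_a v^b \partial_b v^a\bigr) = 2 \sum_{1\leq a<b\leq 3}\bigl(\partial_a v^a \partial_b v^b - \partial_a v^b \partial_b v^a\bigr),
\]
where the first equality relies on the $a=b$ contribution vanishing and the second on the symmetry of the summand under $a\leftrightarrow b$. The main obstacle is purely bookkeeping: ensuring that the coefficients of the three different quadratic structures ($\delta^{ab}\partial_a\Densrenormalized\,\partial_b\Densrenormalized$, $(\partial_a v^a)^2$, and $(g^{-1})^{\alpha\beta}\partial_\alpha\Densrenormalized\,\partial_\beta\Densrenormalized$) combine to yield exactly the coefficient $-3$ advertised in \eqref{E:PROOFRENORMALIZEDDENSITYWAVEEQUATION}, and that the velocity-derivative terms recombine into the antisymmetric form displayed on the right-hand side.
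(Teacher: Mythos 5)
Your proposal is correct and follows the same route as the paper's proof: apply Lemma~\ref{L:COVARIANTWAVEOPRELATIVETOCARTESIAN} with $\phi=\Densrenormalized$, expand $-\Transport\Transport\Densrenormalized$ via $\Transport\Densrenormalized=-\partial_a v^a$, the commutator identity, and \eqref{E:TRANSPORTVELOCITYRELATIVETORECTANGULAR}, cancel the $\Speed^2\delta^{ab}\partial_a\partial_b\Densrenormalized$ terms, and then recombine the $\Speed'$-weighted quadratics using $(g^{-1})^{\alpha\beta}\partial_\alpha\Densrenormalized\,\partial_\beta\Densrenormalized = -(\Transport\Densrenormalized)^2+\Speed^2\delta^{ab}\partial_a\Densrenormalized\,\partial_b\Densrenormalized$ and the velocity terms via the identity \eqref{E:VELOCITYNULLFORMIDENTITY}. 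All the bookkeeping you flag at the end does indeed close, and the coefficient $-3$ comes out exactly as you predict.
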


\begin{proof}
First, using \eqref{E:COVARIANTWVAEOPERATORINRECTANGULARCOORDINATES}
with $\phi = \Densrenormalized$
and equation \eqref{E:TRANSPORTDENSRENORMALIZEDRELATIVETORECTANGULAR},
we compute that
\begin{align} \label{E:FIRSTCOMPUTATIONRENORMALIZEDDENSITYWAVEEQUATION}
	\square_g \Densrenormalized
	& = - \Transport \Transport \Densrenormalized
		+ 
		\Speed^2 \delta^{ab} \partial_a \partial_b \Densrenormalized
		 + 2 \Speed^{-1} \Speed' \Transport \Densrenormalized \Transport \Densrenormalized
		+ (\partial_a v^a)^2
		- \Speed^{-1} \Speed' 
			(g^{-1})^{\alpha \beta} 
			\partial_{\alpha} \Densrenormalized \partial_{\beta} \Densrenormalized.
\end{align}
Next, we use
\eqref{E:MATERIALVECTORVIELDRELATIVETORECTANGULAR},
\eqref{E:TRANSPORTDENSRENORMALIZEDRELATIVETORECTANGULAR},
and \eqref{E:TRANSPORTVELOCITYRELATIVETORECTANGULAR},
to compute that
\begin{align} \label{E:TWOTRANSPORTAPPLIEDTORENORMALIZEDDENSITYEXPRESSION}
	\Transport \Transport \Densrenormalized
	& = - \partial_a (\Transport v^a)
		+ (\partial_a v^b) \partial_b v^a 
			\\
	& = 
		\Speed^2 \delta^{ab} \partial_a \partial_b \Densrenormalized
		+
		\delta^{ab} (\partial_a \Speed^2)  \partial_b \Densrenormalized
		+ 
		(\partial_a v^b) \partial_b v^a
			\notag \\
	& = 
		\Speed^2 \delta^{ab} \partial_a \partial_b \Densrenormalized
		+
		2 \Speed \Speed' \delta^{ab} \partial_a \Densrenormalized \partial_b \Densrenormalized
		+ 
		(\partial_a v^b) \partial_b v^a.
			\notag 
\end{align}
Finally, using \eqref{E:TWOTRANSPORTAPPLIEDTORENORMALIZEDDENSITYEXPRESSION} 
to substitute for the term
$- \Transport \Transport \Densrenormalized$ on RHS~\eqref{E:FIRSTCOMPUTATIONRENORMALIZEDDENSITYWAVEEQUATION}
and using the identities
\begin{align} \label{E:VELOCITYNULLFORMIDENTITY}
(\partial_a v^a)^2 
-
(\partial_a v^b) \partial_b v^a
=
2 
\sum_{1 \leq a < b \leq 3}
	\left\lbrace
		\partial_a v^a \partial_b v^b
		-
	\partial_a v^b \partial_b v^a
	\right\rbrace
\end{align}
and
$
\Transport \Densrenormalized \Transport \Densrenormalized
-
\Speed^2 \delta^{ab} \partial_a \Densrenormalized \partial_b \Densrenormalized
= - (g^{-1})^{\alpha \beta} \partial_{\alpha} \Densrenormalized \partial_{\beta} \Densrenormalized
$
(see \eqref{E:INVERSEACOUSTICALMETRIC}),
we arrive at the desired expression 
\eqref{E:PROOFRENORMALIZEDDENSITYWAVEEQUATION}.
\end{proof}

We now establish equation \eqref{E:VELOCITYWAVEEQUATION}.

\begin{lemma}[\textbf{Wave equation for} $v^i$]
\label{L:WAVEEQUATIONFORV}
The compressible Euler equations 
\eqref{E:TRANSPORTDENSRENORMALIZEDRELATIVETORECTANGULAR}-\eqref{E:TRANSPORTVELOCITYRELATIVETORECTANGULAR}
imply the following covariant wave equation for the
scalar-valued function $v^i$, $i=1,2$:
\begin{align} \label{E:PROOFVELOCITYWAVEEQUATION}
\square_g v^i
& = - \Speed^2 \exp(\Densrenormalized)  (\Flatcurl \Vortrenormalized)^i 
			+
			2 \exp(\Densrenormalized) \epsilon_{iab} (\Transport v^a) \Vortrenormalized^b
				\\
& \ \
			- (1+\Speed^{-1} \Speed') (g^{-1})^{\alpha \beta} \partial_{\alpha} \Densrenormalized \partial_{\beta} v^i.
				\notag
\end{align}
\end{lemma}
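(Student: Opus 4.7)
The plan is to apply the Cartesian-coordinate expression for $\square_g$ provided by Lemma~\ref{L:COVARIANTWAVEOPRELATIVETOCARTESIAN} with $\phi = v^i$, then manipulate each term using the two basic Euler equations \eqref{E:TRANSPORTDENSRENORMALIZEDRELATIVETORECTANGULAR}--\eqref{E:TRANSPORTVELOCITYRELATIVETORECTANGULAR} and the vorticity identity $\omega^i = \exp(\Densrenormalized)\Vortrenormalized^i$. Starting from
\[
\square_g v^i = - \Transport \Transport v^i + \Speed^2 \delta^{ab} \partial_a \partial_b v^i + 2 \Speed^{-1} \Speed' (\Transport \Densrenormalized) \Transport v^i - (\partial_a v^a) \Transport v^i - \Speed^{-1} \Speed' (g^{-1})^{\alpha \beta} (\partial_{\alpha} \Densrenormalized) \partial_{\beta} v^i,
\]
the last term already matches the $\Speed^{-1}\Speed'$ piece of the desired null form $\mathscr{Q}^i$, so the work concentrates on the first four terms.

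The key step is the computation of $\Transport \Transport v^i$. Applying $\Transport$ to \eqref{E:TRANSPORTVELOCITYRELATIVETORECTANGULAR} and using the commutator identity $[\partial_i,\Transport] = (\partial_i v^a) \partial_a$ together with \eqref{E:TRANSPORTDENSRENORMALIZEDRELATIVETORECTANGULAR} yields
\[
\Transport \Transport v^i = - 2 \Speed \Speed' (\Transport \Densrenormalized) \partial_i \Densrenormalized + \Speed^2 \partial_i \partial_a v^a + \Speed^2 (\partial_i v^a) \partial_a \Densrenormalized.
\]
Next I would replace the flat Laplacian by $\delta^{ab}\partial_a \partial_b v^i = \partial_i \partial_a v^a - (\Flatcurl \omega)^i$ via the vector-calculus identity $\partial_a \partial_a v^i = \partial_i \partial_a v^a - \epsilon_{iab}\epsilon_{bcd}\partial_a \partial_c v^d$. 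Substituting both computations, the bare gradient-of-divergence terms $\Speed^2 \partial_i \partial_a v^a$ cancel, and the $2\Speed\Speed'(\Transport\Densrenormalized)\partial_i\Densrenormalized$ term is eliminated by $2\Speed^{-1}\Speed'(\Transport\Densrenormalized)\Transport v^i = -2\Speed\Speed'(\Transport\Densrenormalized)\partial_i\Densrenormalized$.

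What remains after these cancellations is
\[
\square_g v^i = - \Speed^2 (\Flatcurl \omega)^i - \Speed^2 (\partial_i v^a) \partial_a \Densrenormalized + \Speed^2 (\partial_a v^a) \partial_i \Densrenormalized - \Speed^{-1} \Speed' (g^{-1})^{\alpha \beta} \partial_{\alpha} \Densrenormalized \partial_{\beta} v^i,
\]
where I used $-(\partial_a v^a)\Transport v^i = \Speed^2(\partial_a v^a)\partial_i \Densrenormalized$. To match the target equation, I would expand $\exp(\Densrenormalized)(\Flatcurl \Vortrenormalized)^i = (\Flatcurl \omega)^i - \epsilon_{iab}(\partial_a \Densrenormalized)\omega^b$ and, using $\Transport v^a = -\Speed^2 \partial_a \Densrenormalized$, identify $2\Speed^2\epsilon_{iab}(\partial_a \Densrenormalized)\omega^b = -2\exp(\Densrenormalized)\epsilon_{iab}(\Transport v^a)\Vortrenormalized^b$. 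An $\epsilon\cdot\epsilon$ contraction then shows
\[
\Speed^2\bigl[(\partial_a v^a)\partial_i\Densrenormalized - (\partial_i v^a)\partial_a\Densrenormalized\bigr] = -(g^{-1})^{\alpha\beta}\partial_\alpha\Densrenormalized\partial_\beta v^i + \Speed^2(\partial_a\Densrenormalized)\epsilon_{iab}\omega^b\cdot(\text{repack}),
\]
and assembling these pieces converts the residual terms into exactly $-\Speed^2\exp(\Densrenormalized)(\Flatcurl\Vortrenormalized)^i + 2\exp(\Densrenormalized)\epsilon_{iab}(\Transport v^a)\Vortrenormalized^b - (g^{-1})^{\alpha\beta}\partial_\alpha\Densrenormalized\partial_\beta v^i$, completing \eqref{E:PROOFVELOCITYWAVEEQUATION}. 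The only real difficulty is bookkeeping: keeping track of signs in the two $\epsilon$-contractions and verifying that every $\Speed\Speed'(\Transport\Densrenormalized)\partial_i\Densrenormalized$ contribution cancels cleanly, so that the surviving quadratic piece has precisely the coefficient $1 + \Speed^{-1}\Speed'$ appearing in $\mathscr{Q}^i$.
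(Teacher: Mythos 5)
Your proposal is correct and follows the same basic blueprint as the paper's own proof (start from Lemma~\ref{L:COVARIANTWAVEOPRELATIVETOCARTESIAN}, differentiate the velocity equation with $\Transport$, commute $[\Transport,\partial_i]$, insert the density equation, swap $\omega$ for $\exp(\Densrenormalized)\Vortrenormalized$), but the bookkeeping route is organized a bit differently. The paper obtains the factor of $2$ in the $\exp(\Densrenormalized)\epsilon_{iab}(\Transport v^a)\Vortrenormalized^b$ term by splitting the intermediate quantity $\delta^{ia}\partial_a v^b\,\Transport v^b$ into an antisymmetric piece $(\partial_i v^b-\partial_b v^i)\Transport v^b = \exp(\Densrenormalized)\epsilon_{iab}(\Transport v^a)\Vortrenormalized^b$ plus a symmetric-looking remainder; this gives one copy from the curl conversion (the identity $\Speed^2\Flatcurl\omega^i = \Speed^2\exp(\Densrenormalized)\Flatcurl\Vortrenormalized^i - \exp(\Densrenormalized)\epsilon_{iab}(\Transport v^a)\Vortrenormalized^b$) and a second copy from the antisymmetric piece, leaving the remainder $\lbrace(\Transport v^a)\partial_a v^i - (\partial_a v^a)\Transport v^i\rbrace$ to collapse directly into $-(g^{-1})^{\alpha\beta}\partial_\alpha\Densrenormalized\,\partial_\beta v^i$. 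You instead keep the quadratic remainder in gradient form, $\Speed^2[(\partial_a v^a)\partial_i\Densrenormalized - (\partial_i v^a)\partial_a\Densrenormalized]$, and extract the second copy at the end by an $\epsilon\cdot\epsilon$ contraction. Both routes are algebraically equivalent; yours is slightly more mechanical, while the paper's split makes the ``two vorticity contributions'' more transparent from the start.

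One sign to pin down: the $\epsilon\cdot\epsilon$ identity your last display gestures at is
\[
\Speed^2\bigl[(\partial_a v^a)\partial_i\Densrenormalized - (\partial_i v^a)\partial_a\Densrenormalized\bigr] = -(g^{-1})^{\alpha\beta}\partial_\alpha\Densrenormalized\,\partial_\beta v^i - \Speed^2\,\epsilon_{iab}(\partial_a\Densrenormalized)\omega^b,
\]
so the remnant carries a \emph{minus} sign; combined with the $-\Speed^2\epsilon_{iab}(\partial_a\Densrenormalized)\omega^b$ that comes out of $-\Speed^2\Flatcurl\omega^i = -\Speed^2\exp(\Densrenormalized)\Flatcurl\Vortrenormalized^i - \Speed^2\epsilon_{iab}(\partial_a\Densrenormalized)\omega^b$, this gives $-2\Speed^2\epsilon_{iab}(\partial_a\Densrenormalized)\omega^b = +2\exp(\Densrenormalized)\epsilon_{iab}(\Transport v^a)\Vortrenormalized^b$, matching the target. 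Your ``(repack)'' placeholder with an apparent plus sign would be a sign error if taken literally, but once the sign is corrected the proof closes.
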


\begin{proof}
First, we use \eqref{E:COVARIANTWVAEOPERATORINRECTANGULARCOORDINATES}
with $\phi = v^i$ 
and equation \eqref{E:TRANSPORTVELOCITYRELATIVETORECTANGULAR}
to deduce
\begin{align} \label{E:FIRSTCOMPUTATIONVELOCITYWAVEEQUATION}
	\square_g v^i
	& = - \Transport \Transport v^i
		+ \Speed^2 \delta^{ab} \partial_a \partial_b v^i
		- 2 \Speed \Speed' \Transport \Densrenormalized \delta^{ia} \partial_a \Densrenormalized
		- (\partial_a v^a) \Transport v^i
		- \Speed^{-1} \Speed' (g^{-1})^{\alpha \beta} \partial_{\alpha} \Densrenormalized \partial_{\beta} v^i.
\end{align}
Next, we use
\eqref{E:MATERIALVECTORVIELDRELATIVETORECTANGULAR},
\eqref{E:TRANSPORTDENSRENORMALIZEDRELATIVETORECTANGULAR},
\eqref{E:TRANSPORTVELOCITYRELATIVETORECTANGULAR},
and
\eqref{E:VORTICITYDEFINITION}
to compute that
\begin{align} \label{E:TWOTRANSPORTAPPLIEDTOVELOCITYEXPRESSION}
\Transport \Transport v^i
	& = - \Speed^2 \delta^{ia} \Transport \partial_a \Densrenormalized
		- 2 \Speed \Speed' \Transport \Densrenormalized \delta^{ia} \partial_a \Densrenormalized
			\\
	& = - \Speed^2 \delta^{ia} \partial_a (\Transport \Densrenormalized)
		+ \Speed^2 \delta^{ia} \partial_a v^b \partial_b \Densrenormalized
		- 2 \Speed \Speed' \Transport \Densrenormalized \delta^{ia} \partial_a \Densrenormalized
		\notag \\
	& = \Speed^2 \delta^{ia} \delta_c^b \partial_a (\partial_b v^c)
		- \delta^{ia}  \partial_a v^b \Transport v^b
		- 2 \Speed \Speed' \Transport \Densrenormalized \delta^{ia} \partial_a \Densrenormalized
		\notag \\
	& = \Speed^2 \delta^{bc} \partial_b \partial_c v^i
		+ \Speed^2 \delta^{ia} \partial_c (\partial_a v^c - \partial_c v^a)
		- \delta^{ia}  \partial_a v^b \Transport v^b
		- 2 \Speed \Speed' \Transport \Densrenormalized \delta^{ia} \partial_a \Densrenormalized
			\notag \\
 & = \Speed^2 \delta^{bc} \partial_b \partial_c v^i
		+ \Speed^2 \delta^{ia} \partial_c (\partial_a v^c - \partial_c v^a)
		- (\partial_i v^b - \partial_b v^i) \Transport v^b
		- \partial_a v^i \Transport v^a
		- 2 \Speed \Speed' \Transport \Densrenormalized \delta^{ia} \partial_a \Densrenormalized.
		\notag
\end{align}
Next, we use the identity (see \eqref{E:MODIFIEDVARIABLES})
\begin{align} \label{E:VORTICIITYRENORMALIZEDVORTICITYANDDENSITYRELATION}
	\omega^i 
	& =
	\Vortrenormalized^i \exp(\Densrenormalized)
\end{align}
and equation \eqref{E:TRANSPORTVELOCITYRELATIVETORECTANGULAR}
to derive the identities
\begin{align}
\Speed^2 \delta^{ia} \partial_c (\partial_a v^c - \partial_c v^a)
	& = \Speed^2 \delta^{ia} \epsilon_{acd} \partial_c \omega^d
	= \Speed^2 \Flatcurl \omega^i
		\label{E:FIRSTIDUSEDINDERIVINGVELOCITYWAVEEQN}
		\\
	& = \Speed^2 \exp(\Densrenormalized)  \Flatcurl \Vortrenormalized^i
		+
		\Speed^2 \exp(\Densrenormalized) 
			\epsilon_{icd} \Vortrenormalized^d \partial_c \Densrenormalized
	\notag \\
	& =
		\Speed^2 \exp(\Densrenormalized)  \Flatcurl \Vortrenormalized^i 
			-
			\exp(\Densrenormalized) \epsilon_{iab} (\Transport v^a) \Vortrenormalized^b,
			\notag \\
	(\partial_i v^b - \partial_b v^i) \Transport v^b
 & 
=  \exp(\Densrenormalized) \epsilon_{ibc} \Vortrenormalized^c \Transport v^b
= \exp(\Densrenormalized) (\Transport v^a) \epsilon_{iab} \Vortrenormalized^b,
\label{E:SECONDIDUSEDINDERIVINGVELOCITYWAVEEQN}
\end{align}
Substituting the RHSs of 
\eqref{E:FIRSTIDUSEDINDERIVINGVELOCITYWAVEEQN}-\eqref{E:SECONDIDUSEDINDERIVINGVELOCITYWAVEEQN}
for the relevant terms on RHS~\eqref{E:TWOTRANSPORTAPPLIEDTOVELOCITYEXPRESSION},
we obtain
\begin{align} \label{E:TWOTRANSPORTAPPLIEDTOVELOCITYSECONDEXPRESSION}
	\Transport \Transport v^i
	& = \Speed^2 \delta^{bc} \partial_b \partial_c v^i
			+ 
			\Speed^2 \exp(\Densrenormalized)  \Flatcurl \Vortrenormalized^i 
			-
			2 \exp(\Densrenormalized) \epsilon_{iab} (\Transport v^a) \Vortrenormalized^b
			\\
	& \ \
		- (\partial_a v^i) \Transport v^a
		- 2 \Speed \Speed' \Transport \Densrenormalized \delta^{ia} \partial_a \Densrenormalized.
		\notag
\end{align}
Next, substituting $-$RHS~\eqref{E:TWOTRANSPORTAPPLIEDTOVELOCITYSECONDEXPRESSION}
for the term
$- \Transport \Transport v^i$ 
on RHS~\eqref{E:FIRSTCOMPUTATIONVELOCITYWAVEEQUATION},
we arrive at 
\begin{align}\label{first.thm.prelim.1}
\square_g v^i
& = - \Speed^2 \exp(\Densrenormalized)  (\Flatcurl \Vortrenormalized)^i 
			+
			2 \exp(\Densrenormalized) \epsilon_{iab} (\Transport v^a) \Vortrenormalized^b
				\\
& \ \
			+
			\left\lbrace
				(\Transport v^a) \partial_a v^i
					-
				(\partial_a v^a) \Transport v^i
			\right\rbrace
			- \Speed^{-1} \Speed' (g^{-1})^{\alpha \beta} \partial_{\alpha} \Densrenormalized \partial_{\beta} v^i.
				\notag
\end{align}
To handle the terms $\lbrace \cdot \rbrace$ in \eqref{first.thm.prelim.1}, we use 
\eqref{E:TRANSPORTDENSRENORMALIZEDRELATIVETORECTANGULAR}, 
\eqref{E:TRANSPORTVELOCITYRELATIVETORECTANGULAR},
and \eqref{E:INVERSEACOUSTICALMETRIC} 
to obtain
\begin{equation}\label{first.thm.prelim.2}
\begin{split}
(\Transport v^a) \partial_a v^i
					-
				(\partial_a v^a) \Transport v^i
=- \Speed^2 \delta^{ab} (\partial_b \Densrenormalized) \partial_a v^i
	+
	(\Transport \Densrenormalized) \Transport v^i
= -(g^{-1})^{\alpha \beta} (\partial_{\alpha} \Densrenormalized) \partial_{\beta} v^i.
\end{split}
\end{equation}
Finally, substituting \eqref{first.thm.prelim.2} into \eqref{first.thm.prelim.1}, we conclude the desired equation \eqref{E:PROOFVELOCITYWAVEEQUATION}.

\end{proof}

	We now establish equations 
	\eqref{E:FLATDIVOFRENORMALIZEDVORTICITY}-\eqref{E:EVOLUTIONEQUATIONFLATCURLRENORMALIZEDVORTICITY}.

\begin{lemma}[\textbf{Equations for} $\Flatdiv \Vortrenormalized$ \textbf{and}
	$\Flatcurl \Vortrenormalized$]
\label{L:DIVANDCURLEQUATIONS}
The compressible Euler equations 
\eqref{E:TRANSPORTDENSRENORMALIZEDRELATIVETORECTANGULAR}-\eqref{E:TRANSPORTVELOCITYRELATIVETORECTANGULAR}
imply the following equation for
$\Flatdiv \Vortrenormalized$
and transport equation for
the scalar-valued function $(\Flatcurl \Vortrenormalized)^i$:
\begin{subequations}
	\begin{align} \label{E:PROOFFLATDIVOFRENORMALIZEDVORTICITY}
	\Flatdiv \Vortrenormalized
	& = - \Vortrenormalized^a \partial_a \Densrenormalized,
		\\
	\Transport (\Flatcurl \Vortrenormalized)^i
	& = (\exp \Densrenormalized) \Vortrenormalized^a \partial_a \Vortrenormalized^i
			- (\exp \Densrenormalized) \Vortrenormalized^i \Flatdiv \Vortrenormalized
			+ 
			\epsilon_{iab} (\partial_a \Vortrenormalized^c) \partial_c v^b
			- \epsilon_{iab} (\partial_a v^c) \partial_c \Vortrenormalized^b.
			\label{E:PROOFEVOLUTIONEQUATIONFLATCURLRENORMALIZEDVORTICITY}
\end{align}
\end{subequations}

\end{lemma}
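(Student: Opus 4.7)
The plan is to derive the two identities by direct computation, leveraging the relation $\omega = (\exp \Densrenormalized)\Vortrenormalized$ from \eqref{E:MODIFIEDVARIABLES} and the already-established transport equation \eqref{E:RESTATEDRENORMALIZEDVORTICITYEVOLUTIONEQUATION}. No delicate cancellations beyond standard vector calculus are needed; the main bookkeeping task is a commutator computation for $[\Flatcurl, \Transport]$.

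First I would establish \eqref{E:PROOFFLATDIVOFRENORMALIZEDVORTICITY}. Since $\omega^i = \epsilon_{iab}\partial_a v^b$, the identity $\partial_i \epsilon_{iab}\partial_a v^b \equiv 0$ gives $\Flatdiv \omega = 0$. Writing $\Vortrenormalized^i = \omega^i \exp(-\Densrenormalized)$ and applying $\partial_i$ yields
\[
\Flatdiv \Vortrenormalized
= (\partial_i \omega^i)\exp(-\Densrenormalized) - \omega^i (\partial_i \Densrenormalized)\exp(-\Densrenormalized)
= - \Vortrenormalized^a \partial_a \Densrenormalized,
\]
which is \eqref{E:PROOFFLATDIVOFRENORMALIZEDVORTICITY}.

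Next I would derive \eqref{E:PROOFEVOLUTIONEQUATIONFLATCURLRENORMALIZEDVORTICITY} by applying the operator $\epsilon_{iab}\partial_a$ to \eqref{E:RESTATEDRENORMALIZEDVORTICITYEVOLUTIONEQUATION}. Using \eqref{E:MATERIALVECTORVIELDRELATIVETORECTANGULAR}, a direct commutator computation gives
\[
\epsilon_{iab}\partial_a(\Transport \Vortrenormalized^b)
= \Transport(\Flatcurl \Vortrenormalized)^i + \epsilon_{iab}(\partial_a v^c)\partial_c \Vortrenormalized^b,
\]
which accounts for the last term on RHS~\eqref{E:PROOFEVOLUTIONEQUATIONFLATCURLRENORMALIZEDVORTICITY}. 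On the right-hand side, applying $\epsilon_{iab}\partial_a$ to $\Vortrenormalized^c \partial_c v^b$ yields
\[
\epsilon_{iab}(\partial_a \Vortrenormalized^c)\partial_c v^b + \Vortrenormalized^c \partial_c \omega^i,
\]
after rearranging the mixed partials and using $\epsilon_{iab}\partial_a\partial_c v^b = \partial_c \omega^i$.

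The key step is then to rewrite $\Vortrenormalized^c \partial_c \omega^i$. Using $\omega^i = (\exp \Densrenormalized)\Vortrenormalized^i$ and the Leibniz rule,
\[
\Vortrenormalized^c \partial_c \omega^i
= (\exp \Densrenormalized)\Vortrenormalized^c \partial_c \Vortrenormalized^i
+ (\exp \Densrenormalized)\Vortrenormalized^i \Vortrenormalized^c \partial_c \Densrenormalized,
\]
and the second summand becomes $-(\exp \Densrenormalized)\Vortrenormalized^i \Flatdiv \Vortrenormalized$ by \eqref{E:PROOFFLATDIVOFRENORMALIZEDVORTICITY}. Combining everything yields \eqref{E:PROOFEVOLUTIONEQUATIONFLATCURLRENORMALIZEDVORTICITY}. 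The only mild subtlety is ensuring that the commutator and Leibniz manipulations are performed with the correct signs; I expect no real obstacle since both equations follow from tracking $\omega = (\exp \Densrenormalized)\Vortrenormalized$ through the already-established transport equation and the vanishing of $\Flatdiv \omega$.
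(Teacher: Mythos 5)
Your proposal is correct and follows the same route as the paper: derive the divergence identity from $\Flatdiv \omega = 0$ and $\omega = (\exp \Densrenormalized)\Vortrenormalized$, then commute $\Flatcurl$ through the transport equation \eqref{E:RESTATEDRENORMALIZEDVORTICITYEVOLUTIONEQUATION} and use the divergence identity to rewrite $\Vortrenormalized^a \partial_a \omega^i$. The paper compresses the commutator and Leibniz steps into a single chain of equalities, but the underlying computation is identical to yours.
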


\begin{proof}
	Since $\omega = \Flatcurl v$, it follows that
	$\Flatdiv \omega = 0$. Equation \eqref{E:PROOFFLATDIVOFRENORMALIZEDVORTICITY}
	follows from this identity,
	\eqref{E:VORTICIITYRENORMALIZEDVORTICITYANDDENSITYRELATION},
	and simple calculations.

	We now derive \eqref{E:PROOFEVOLUTIONEQUATIONFLATCURLRENORMALIZEDVORTICITY}.
	Commuting the already established equation \eqref{E:RENORMALIZEDVORTICTITYTRANSPORTEQUATION}
	with the Euclidean curl operator
	and using equations 
	\eqref{E:MATERIALVECTORVIELDRELATIVETORECTANGULAR},
	\eqref{E:VORTICIITYRENORMALIZEDVORTICITYANDDENSITYRELATION},
	and
	\eqref{E:PROOFFLATDIVOFRENORMALIZEDVORTICITY},
	we obtain the desired equation as follows:
	\begin{align} \label{E:FIRSTSTEPEVOLUTIONEQUATIONFLATCURLRENORMALIZEDVORTICITY}
	\Transport (\Flatcurl \Vortrenormalized)^i
	& 	= \Vortrenormalized^a \partial_a \omega^i
			+ \epsilon_{iab} (\partial_a \Vortrenormalized^c) \partial_c v^b
			- \epsilon_{iab} (\partial_a v^c) \partial_c \Vortrenormalized^b
				\\
	& = (\exp \Densrenormalized) \Vortrenormalized^a \partial_a \Vortrenormalized^i
			+ (\exp \Densrenormalized) \Vortrenormalized^i \Vortrenormalized^a \partial_a \Densrenormalized
			+ \epsilon_{iab} (\partial_a \Vortrenormalized^c) \partial_c v^b
			- \epsilon_{iab} (\partial_a v^c) \partial_c \Vortrenormalized^b
				\notag \\
	& = (\exp \Densrenormalized) \Vortrenormalized^a \partial_a \Vortrenormalized^i
			- (\exp \Densrenormalized) \Vortrenormalized^i \Flatdiv \Vortrenormalized
			+ \epsilon_{iab} (\partial_a \Vortrenormalized^c) \partial_c v^b
			- \epsilon_{iab} (\partial_a v^c) \partial_c \Vortrenormalized^b.
			\notag
\end{align}

We have therefore established \eqref{E:PROOFEVOLUTIONEQUATIONFLATCURLRENORMALIZEDVORTICITY},
which completes the proof of Theorem~\ref{T:GEOMETRICWAVETRANSPORTSYSTEM}.

\end{proof}

\bibliographystyle{amsalpha}
\bibliography{JBib}

\end{document}